\newtheorem{proposition}{Proposition}
\newtheorem{lemma}{Lemma}
\newtheorem{theorem}{Theorem}
\newtheorem{corollary}{Corollary}
\theoremstyle{definition}
\newtheorem{definition}{Definition}
\newtheorem{example}{Example}
\newtheorem{construction}{Construction}
\theoremstyle{remark}
\newtheorem {remark}{Remark}
\DeclareMathOperator{\Spec}{Spec}
\DeclareMathOperator{\Hom}{Hom}
\DeclareMathOperator{\cone}{cone}
\DeclareMathOperator{\Span}{span}
\DeclareMathOperator{\Cl}{Cl}
\DeclareMathOperator{\Div}{div}
\DeclareMathOperator{\Ker}{Ker}
\newcommand{\GG}{\mathbb{G}}
\newcommand{\NN}{\mathbb{N}}
\newcommand{\KK}{\mathbb{K}}
\newcommand{\ZZ}{\mathbb{Z}}
\newcommand{\RRR}{\mathcal{R}}
\newcommand{\QQ}{\mathbb{Q}}
\renewcommand{\AA}{\mathbb{A}}
\renewcommand{\phi}{\varphi}
\renewcommand{\ge}{\geqslant}
\renewcommand{\le}{\leqslant}
\newcommand{\Zgezero}{\mathbb{Z}_{\geqslant 0}}
\newcommand{\pa}{\partial}
\newcommand{\overbar}[1]{\mkern 2.5mu\overline{\mkern-2.5mu#1\mkern-0.5mu}\mkern 0.5mu}
\newcommand{\barX}{\overbar{X}}
\newcommand{\barM}{\overbar{M}}
\newcommand{\barT}{\overbar{T}}
\newcommand{\barG}{\overbar{G}}
\begin{document}
\date{}
\title[Commutative algebraic monoids on affine surfaces]{Commutative algebraic monoid structures \\ on affine surfaces}

\author{Sergey Dzhunusov}
\address{National Research University Higher School of Economics, Faculty of Computer Science, Pokrovsky boulevard 11, Moscow, 109028 Russia}
\email{dzhunusov398@gmail.com}

\author{Yulia Zaitseva}
\address{National Research University Higher School of Economics, Faculty of Computer Science, Pokrovsky boulevard 11, Moscow, 109028 Russia}
\email{yuliazaitseva@gmail.com}

\subjclass[2010]{Primary 20M14, 20M32; \ Secondary 14R20, 20G15}

\keywords{Algebraic monoid, group embedding, toric variety, Demazure root, grading, locally nilpotent derivation, Cox ring}

\thanks{The work was supported by the Foundation for Advancement of Theoretical Physics and Mathematics ``BASIS''}

\begin{abstract}
We classify commutative algebraic monoid structures on normal affine surfaces over an algebraically closed field of characteristic zero. The answer is given in two languages: comultiplications and Cox coordinates. The result follows from a more general classification of commutative monoid structures of rank~$0$, $n-1$ or~$n$ on a normal affine variety of dimension~$n$. 
\end{abstract}

\maketitle

%%%%%%%%%%%%%%%%%%%

\section*{Introduction}
\label{introd_sec}

An (affine) algebraic monoid is an irreducible (affine) algebraic variety $X$ with an associative multiplication
\[
\mu \colon X\times X\to X,\quad (x,y)\mapsto x*y,
\]
which is a morphism of algebraic varieties and admits an element $e\in X$ such that ${e*x=x*e=x}$ for all $x\in X$. Examples of affine algebraic monoids are affine algebraic groups and multiplicative monoids of finite-dimensional associative algebras with unit. 

The group of invertible elements $G(X)$ of an algebraic monoid $X$ is an algebraic group, which is Zariski open in $X$. By~\cite[Theorem~3]{Ri2}, every algebraic monoid $X$ whose group of invertible elements $G(X)$ is an affine algebraic group is an affine monoid. 

By a group embedding we mean an irreducible affine variety $X$ with an open embedding $G\hookrightarrow X$ of an affine algebraic group $G$ such that both actions by left and right multiplications of $G$ on itself can be extended to actions of $G$ on $X$. In other words, the variety $X$ is a $(G\times G)$-equivariant open embedding of the homogeneous space $(G\times G)/\Delta(G)$, where $\Delta(G)$ is the diagonal in $G\times G$. For an affine variety $X$ and an affine algebraic group $G$, there is a natural correspondence between group embeddings $G\hookrightarrow X$ and monoid structures on~$X$ with $G(X) = G$. This is proved in~\cite[Theorem~1]{Vi} for characteristic zero and in~\cite[Proposition~1]{Ri1} for the general case. 

The theory of affine algebraic monoids and group embeddings is a rich and deeply developed area of mathematics lying at the intersection of algebra, algebraic geometry, combinatorics and representation theory; see~\cite{Pu5,Re,Ri1,Vi} for general presentations. An affine algebraic monoid $X$ is called reductive if the group $G(X)$ is a reductive affine algebraic group. The most developed is the theory of reductive monoids, see e.g. the combinatorial classification of reductive monoids in~\cite{Vi,Ri1}. 

Let the ground field $\KK$ be algebraically closed and of characteristic zero. We study commutative affine monoids, i.e. affine monoids $X$ with commutative multiplication $\mu$ or, equivalently, commutative group of invertible elements~$G(X)$. The intersection of commutative and reductive cases are affine torus embeddings, i.e. affine toric varieties, see~\cite{Neeb} for more information about toric monoids. In the commutative case, the group $G(X)$ splits into the direct product $\GG_m^r \times \GG_a^s$, where $\GG_m = (\KK^\times, \times)$ and $\GG_a = (\KK, +)$ are the multiplicative and the additive groups of the ground field $\KK$, respectively. We call the number~$r$ the rank of the commutative monoid~$X$. 

In~\cite{ABZ}, commutative monoid structures on affine spaces $\AA^n$ are studied. In particular, \cite[Proposition~1]{ABZ} gives the classification for an arbitrary dimension~$n$ and ranks $0$,~$n-1$ and~$n$. This implies the classification of monoids on~$\AA^1$ and $\AA^2$, and \cite[Theorem~1]{ABZ} gives the classification in dimension~$3$. 

In this paper, we obtain a classification of commutative monoid structures on normal affine surfaces (Section~\ref{surf_sec}). For that, we generalize some results of~\cite{ABZ} to arbitrary normal affine varieties. It turns out that any affine algebraic variety admitting a commutative monoid structure of rank $0$, $n-1$ or $n$ is toric, and structures of rank~$n-1$ are described by Demazure roots of the variety. We give precise formulas in two languages: the first one describes monoid structures via comultiplications $\mu^*\colon \KK[X] \to \KK[X] \otimes \KK[X]$ (Section~\ref{comult_sec}), and the second one is the lifting of monoid structures from $X$ to the total coordinate space~$\barX$ via the Cox construction (Section~\ref{Coxlang_sec}). Namely, along with the natural addition on affine spaces and comultiplications $\mu^*\colon\chi^u \mapsto \chi^u \otimes \chi^u$ provided by the toric structure, we obtain a family of comultiplications $\mu^*\colon\chi^u \mapsto \chi^u \otimes \chi^u (1 \otimes \chi^e + \chi^e \otimes 1)^{\langle p, u\rangle}$ given by Demazure roots~$e$ corresponding to the primitive vector~$p$ on a ray of the fan of the toric variety. 

The authors are grateful to their supervisor Ivan Arzhantsev for careful reading of the paper and valuable suggestions, to Juergen Hausen for useful comments, to the referee for proposed corrections and improvements, and to Boris Bilich who pointed our attention to an innacuracy in the formulation of Corollary~1. 

%%%%%%%%%%%%%%%%%%%
\section{Toric varieties and $\GG_a$-actions}
\label{toric_sec}

Let $X$ be a normal irreducible affine algebraic variety with an action of an algebraic torus $T = (\KK^\times)^n$. Denote by $M$ the character lattice of the torus $T$ and by $N$ the lattice of one-parameter subgroups. The dual action of the torus $T$ on the algebra $\KK[X]$ of regular functions on $X$ defines the decomposition of $\KK[X]$ into the direct sum 
$\KK[X] = \bigoplus\limits_{u \in M} \KK[X]_u$, where the torus $T$ acts on the subspace $\KK[X]_u$ by multiplication by the character $\chi^u(t)=t^u$, $t \in T$, corresponding to $u \in M$. Denote $S_X = \{u \in M \mid \KK[X]_u \ne 0\}$. 

Let $X$ be a \emph{toric variety}, that is the torus $T$ acts effectively on $X$ with an open orbit. In this case, the semigroup $S_X$ equals $M \cap \omega$ for some rational polyhedral cone $\omega \subseteq M \otimes_\ZZ \QQ$ and $\dim \KK[X]_u = 1$ for any $u \in S_X$. Fixing a point $x_0$ in the open orbit, we can identify $T$ with an open orbit via the orbit map $T \to X$, $t \mapsto tx_0$, whence a character $\chi^u\colon T \to \KK^\times$ can be identified with a rational function $\chi^u \in \KK(X)$, $u \in M$. Then we have a decomposition 
\begin{equation}
\label{torgrad_eq}
\KK[X] = \bigoplus\limits_{u \in S_X} \KK\chi^u.
\end{equation}

For an algebra~$A$, a linear map $\pa\colon A \to A$ is a \emph{derivation} if it satisfies the Leibniz rule: $\pa(fg) = \pa(f)g + f \pa(g)$ for any $f, g \in A$. A derivation $\pa$ is called \emph{locally nilpotent} (LND) if for any $f \in A$ there exists a number $n \in \NN$ such that $\pa^n(f) = 0$. Let $\GG_a = (\KK, +)$ be the additive group of the ground field~$\KK$. The exponential map defines a bijection between locally nilpotent derivations on an algebra $A$ and rational $\GG_a$-actions on~$A$: $s \cdot f = \exp(s\pa)(f)$ for $s \in \GG_a$, $f \in A$. For an affine algebraic variety~$X$ we obtain that LNDs on the algebra $\KK[X]$ are in bijection with $\GG_a$-actions on~$X$, see~\cite[Section~1.5]{Fr}. 

Let $A = \bigoplus\limits_{u \in S} A_u$ be graded by a semigroup $S$. A derivation $\pa\colon A \to A$ is called \emph{homogeneous} if it maps homogeneous elements to homogeneous ones. From the Leibniz rule it follows that a homogeneous derivation has the \emph{degree} $\deg \pa \in \ZZ S$ such that $\pa(A_u) \subseteq A_{u + \deg \pa}$ for any $u \in S$. It is easy to see that the bijection given by the exponential map restricts to the bijection between LNDs on $\KK[X]$ of degree zero with respect to grading~\eqref{torgrad_eq} and $\GG_a$-actions on~$X$ normalized by the torus~$T$ in the automorphism group of~$X$. 

Any toric variety has a combinatorial description in terms of its fan, see~\cite[Section 3.1]{CLS}. For an affine toric variety $X$, the fan consists of all faces of the cone $\sigma$ dual to the cone~$\omega$ defined above, i.e. 
$
\sigma = \{v \in N_\QQ \mid \langle v, u\rangle \ge 0 \; \text{ for all } u \in \omega\},
$
where $\langle\,\cdot\,,\,\cdot\,\rangle$ is a natural pairing between $N_\QQ := N \otimes_\ZZ \QQ$ and $M_\QQ := M \otimes_\ZZ \QQ$. Let $\sigma(1) = \{\rho_i \mid 1 \le i \le m\}$ be the set of rays of a cone~$\sigma$. Denote by $p_i \in N$ the primitive vector on a ray~$\rho_i \in \sigma(1)$, and for any $1 \le i \le m$ set
\[
\mathfrak{R}_i = \{e \in M \mid \langle p_i, e\rangle = -1, \; \langle p_j, e\rangle \ge 0 \; \text{ for all } j \ne i, \; 1 \le j \le m\}.
\]
The elements of the set $\mathfrak{R} = \!\!\!\bigsqcup\limits_{1 \le i \le m} \!\!\!\mathfrak{R}_i$ are called the \emph{Demazure roots} of the toric variety~$X$. The Demazure roots of $X$ are in one-to-one correspondence with homogeneous LNDs on the algebra~$\KK[X]$ and hence with $\GG_a$-actions on~$X$ normalized by the acting torus~$T$, see~\cite[Theorem~2.7]{Li}. The derivation $\pa_e$ corresponding to a Demazure root $e \in \mathfrak{R}_i$ is given by the formula $\pa_e(\chi^u) = \langle p_i, u\rangle \chi^{u + e}$ for any $u \in M$. The Demazure root~$e$ is the degree of~$\pa_e$, and the action of the torus $T$ on $\GG_a$ by conjugation is the multiplication by $\chi^e$. 

\begin{example}
\label{A2_example}
Let $X = \AA^m$ and the torus $T = (\KK^\times)^m$ act on $\AA^m$ diagonally. Any character of~$T$ has the form $\chi^u(t) = t^u$, $u \in \ZZ^m$, and decomposition~\eqref{torgrad_eq} becomes 
\[
\KK[x_1, \ldots, x_m] = \bigoplus_{u \in \Zgezero^m} \KK x_1^{u_1} \ldots x_m^{u_m},
\]
i.e. $\chi^u(x) = x^u$, $S_X = \Zgezero^m$, $\omega = \QQ_{\ge 0}^m \subseteq M_\QQ$. Then the dual cone is $\sigma = \QQ_{\ge 0}^m \subseteq N_\QQ$, and $p_i$ are the elements of the standard basis. 

Any derivation on $\KK[x_1, \ldots, x_m]$ has the form $\pa = f_1 \frac{\pa}{\pa x_1} + \ldots + f_m \frac{\pa}{\pa x_m}$, $f_i \in \KK[x_1, \ldots, x_m]$. It is homogeneous with respect to the above grading if and only if it is equal to $\lambda_1 x_1 x^u \frac{\pa}{\pa x_1} + \ldots + \lambda_m x_m x^u \frac{\pa}{\pa x_m}$ or 
$\lambda_i x^u \frac{\pa}{\pa x_i}$ for some $u \in S_X$, $\lambda_i \in \KK$. 
It is easy to see that such a derivation is locally nilpotent if and only if it equals $\pa = \lambda x_1^{e_1} \ldots x_m^{e_m} \frac{\pa}{\pa x_i}$ with no~$x_i$ in the monomial. The degree of~$\pa$ equals $e = (e_1, \ldots, \underset{i}{-1}, \ldots, e_m)$ and is a Demazure root in accordance with the definition of~$\mathfrak{R}_i$ given above. 
\begin{center}

\tikzset{every picture/.style={line width=0.75pt}} %set default line width to 0.75pt        

\begin{tikzpicture}[x=0.75pt,y=0.75pt,yscale=-1,xscale=1]
%uncomment if require: \path (0,202); %set diagram left start at 0, and has height of 202

%Shape: Grid [id:dp6101065416419731] 
\draw  [draw opacity=0] (120.69,20.6) -- (321.08,20.6) -- (321.08,182.11) -- (120.69,182.11) -- cycle ; \draw  [color={rgb, 255:red, 155; green, 155; blue, 155 }  ,draw opacity=0.5 ] (120.69,20.6) -- (120.69,182.11)(140.69,20.6) -- (140.69,182.11)(160.69,20.6) -- (160.69,182.11)(180.69,20.6) -- (180.69,182.11)(200.69,20.6) -- (200.69,182.11)(220.69,20.6) -- (220.69,182.11)(240.69,20.6) -- (240.69,182.11)(260.69,20.6) -- (260.69,182.11)(280.69,20.6) -- (280.69,182.11)(300.69,20.6) -- (300.69,182.11)(320.69,20.6) -- (320.69,182.11) ; \draw  [color={rgb, 255:red, 155; green, 155; blue, 155 }  ,draw opacity=0.5 ] (120.69,20.6) -- (321.08,20.6)(120.69,40.6) -- (321.08,40.6)(120.69,60.6) -- (321.08,60.6)(120.69,80.6) -- (321.08,80.6)(120.69,100.6) -- (321.08,100.6)(120.69,120.6) -- (321.08,120.6)(120.69,140.6) -- (321.08,140.6)(120.69,160.6) -- (321.08,160.6)(120.69,180.6) -- (321.08,180.6) ; \draw  [color={rgb, 255:red, 155; green, 155; blue, 155 }  ,draw opacity=0.5 ]  ;
%Straight Lines [id:da749614147683584] 
\draw    (201.8,120.6) -- (201.8,102.6) ;
\draw [shift={(201.8,100.6)}, rotate = 450] [color={rgb, 255:red, 0; green, 0; blue, 0 }  ][line width=0.75]    (10.93,-3.29) .. controls (6.95,-1.4) and (3.31,-0.3) .. (0,0) .. controls (3.31,0.3) and (6.95,1.4) .. (10.93,3.29)   ;
%Straight Lines [id:da5579125495170119] 
\draw    (201.8,120.6) -- (218.69,120.6) ;
\draw [shift={(220.69,120.6)}, rotate = 180] [color={rgb, 255:red, 0; green, 0; blue, 0 }  ][line width=0.75]    (10.93,-3.29) .. controls (6.95,-1.4) and (3.31,-0.3) .. (0,0) .. controls (3.31,0.3) and (6.95,1.4) .. (10.93,3.29)   ;
%Shape: Pie [id:dp08242477670910242] 
\draw  [draw opacity=0][fill={rgb, 255:red, 0; green, 0; blue, 200 }  ,fill opacity=0.2 ] (199.81,29.28) .. controls (200.11,29.28) and (200.4,29.28) .. (200.69,29.28) .. controls (263.27,29.28) and (314.06,69.82) .. (314.48,119.96) -- (200.69,120.6) -- cycle ;
%Shape: Pie [id:dp0564600926572314] 
\draw  [draw opacity=0][fill={rgb, 255:red, 0; green, 0; blue, 200 }  ,fill opacity=0.2 ] (459.51,29.15) .. controls (522.23,29.2) and (573.07,69.97) .. (573.19,120.29) -- (459.4,120.47) -- cycle ;
%Shape: Grid [id:dp0560772517764645] 
\draw  [draw opacity=0] (379.4,20.47) -- (579.79,20.47) -- (579.79,181.98) -- (379.4,181.98) -- cycle ; \draw  [color={rgb, 255:red, 155; green, 155; blue, 155 }  ,draw opacity=0.5 ] (379.4,20.47) -- (379.4,181.98)(399.4,20.47) -- (399.4,181.98)(419.4,20.47) -- (419.4,181.98)(439.4,20.47) -- (439.4,181.98)(459.4,20.47) -- (459.4,181.98)(479.4,20.47) -- (479.4,181.98)(499.4,20.47) -- (499.4,181.98)(519.4,20.47) -- (519.4,181.98)(539.4,20.47) -- (539.4,181.98)(559.4,20.47) -- (559.4,181.98)(579.4,20.47) -- (579.4,181.98) ; \draw  [color={rgb, 255:red, 155; green, 155; blue, 155 }  ,draw opacity=0.5 ] (379.4,20.47) -- (579.79,20.47)(379.4,40.47) -- (579.79,40.47)(379.4,60.47) -- (579.79,60.47)(379.4,80.47) -- (579.79,80.47)(379.4,100.47) -- (579.79,100.47)(379.4,120.47) -- (579.79,120.47)(379.4,140.47) -- (579.79,140.47)(379.4,160.47) -- (579.79,160.47)(379.4,180.47) -- (579.79,180.47) ; \draw  [color={rgb, 255:red, 155; green, 155; blue, 155 }  ,draw opacity=0.5 ]  ;
%Shape: Circle [id:dp9751296013367385] 
\draw  [draw opacity=0][fill={rgb, 255:red, 0; green, 0; blue, 255 }  ,fill opacity=1 ] (496.25,140.47) .. controls (496.25,138.73) and (497.66,137.32) .. (499.4,137.32) .. controls (501.14,137.32) and (502.55,138.73) .. (502.55,140.47) .. controls (502.55,142.21) and (501.14,143.62) .. (499.4,143.62) .. controls (497.66,143.62) and (496.25,142.21) .. (496.25,140.47) -- cycle ;
%Shape: Circle [id:dp9860293919138317] 
\draw  [draw opacity=0][fill={rgb, 255:red, 0; green, 0; blue, 255 }  ,fill opacity=1 ] (516.25,140.47) .. controls (516.25,138.73) and (517.66,137.32) .. (519.4,137.32) .. controls (521.14,137.32) and (522.55,138.73) .. (522.55,140.47) .. controls (522.55,142.21) and (521.14,143.62) .. (519.4,143.62) .. controls (517.66,143.62) and (516.25,142.21) .. (516.25,140.47) -- cycle ;
%Shape: Circle [id:dp6759876798678193] 
\draw  [draw opacity=0][fill={rgb, 255:red, 0; green, 0; blue, 255 }  ,fill opacity=1 ] (536.25,140.47) .. controls (536.25,138.73) and (537.66,137.32) .. (539.4,137.32) .. controls (541.14,137.32) and (542.55,138.73) .. (542.55,140.47) .. controls (542.55,142.21) and (541.14,143.62) .. (539.4,143.62) .. controls (537.66,143.62) and (536.25,142.21) .. (536.25,140.47) -- cycle ;
%Shape: Circle [id:dp3578175357400666] 
\draw  [draw opacity=0][fill={rgb, 255:red, 0; green, 0; blue, 255 }  ,fill opacity=1 ] (556.25,140.47) .. controls (556.25,138.73) and (557.66,137.32) .. (559.4,137.32) .. controls (561.14,137.32) and (562.55,138.73) .. (562.55,140.47) .. controls (562.55,142.21) and (561.14,143.62) .. (559.4,143.62) .. controls (557.66,143.62) and (556.25,142.21) .. (556.25,140.47) -- cycle ;
%Shape: Circle [id:dp8522182911381737] 
\draw  [draw opacity=0][fill={rgb, 255:red, 0; green, 0; blue, 255 }  ,fill opacity=1 ] (476.25,140.47) .. controls (476.25,138.73) and (477.66,137.32) .. (479.4,137.32) .. controls (481.14,137.32) and (482.55,138.73) .. (482.55,140.47) .. controls (482.55,142.21) and (481.14,143.62) .. (479.4,143.62) .. controls (477.66,143.62) and (476.25,142.21) .. (476.25,140.47) -- cycle ;
%Shape: Circle [id:dp7688545311701926] 
\draw  [draw opacity=0][fill={rgb, 255:red, 0; green, 0; blue, 255 }  ,fill opacity=1 ] (437.25,100.47) .. controls (437.25,98.73) and (438.66,97.32) .. (440.4,97.32) .. controls (442.14,97.32) and (443.55,98.73) .. (443.55,100.47) .. controls (443.55,102.21) and (442.14,103.62) .. (440.4,103.62) .. controls (438.66,103.62) and (437.25,102.21) .. (437.25,100.47) -- cycle ;
%Shape: Circle [id:dp28569571294435425] 
\draw  [draw opacity=0][fill={rgb, 255:red, 0; green, 0; blue, 255 }  ,fill opacity=1 ] (437.25,60.47) .. controls (437.25,58.73) and (438.66,57.32) .. (440.4,57.32) .. controls (442.14,57.32) and (443.55,58.73) .. (443.55,60.47) .. controls (443.55,62.21) and (442.14,63.62) .. (440.4,63.62) .. controls (438.66,63.62) and (437.25,62.21) .. (437.25,60.47) -- cycle ;
%Shape: Circle [id:dp12817152461391546] 
\draw  [draw opacity=0][fill={rgb, 255:red, 0; green, 0; blue, 255 }  ,fill opacity=1 ] (437.25,40.47) .. controls (437.25,38.73) and (438.66,37.32) .. (440.4,37.32) .. controls (442.14,37.32) and (443.55,38.73) .. (443.55,40.47) .. controls (443.55,42.21) and (442.14,43.62) .. (440.4,43.62) .. controls (438.66,43.62) and (437.25,42.21) .. (437.25,40.47) -- cycle ;
%Shape: Circle [id:dp373216892781012] 
\draw  [draw opacity=0][fill={rgb, 255:red, 0; green, 0; blue, 255 }  ,fill opacity=1 ] (456.25,140.47) .. controls (456.25,138.73) and (457.66,137.32) .. (459.4,137.32) .. controls (461.14,137.32) and (462.55,138.73) .. (462.55,140.47) .. controls (462.55,142.21) and (461.14,143.62) .. (459.4,143.62) .. controls (457.66,143.62) and (456.25,142.21) .. (456.25,140.47) -- cycle ;
%Shape: Circle [id:dp8003553933315701] 
\draw  [draw opacity=0][fill={rgb, 255:red, 0; green, 0; blue, 255 }  ,fill opacity=1 ] (437.25,120.47) .. controls (437.25,118.73) and (438.66,117.32) .. (440.4,117.32) .. controls (442.14,117.32) and (443.55,118.73) .. (443.55,120.47) .. controls (443.55,122.21) and (442.14,123.62) .. (440.4,123.62) .. controls (438.66,123.62) and (437.25,122.21) .. (437.25,120.47) -- cycle ;
%Shape: Circle [id:dp2344296422141643] 
\draw  [draw opacity=0][fill={rgb, 255:red, 0; green, 0; blue, 255 }  ,fill opacity=1 ] (437.25,81.47) .. controls (437.25,79.73) and (438.66,78.32) .. (440.4,78.32) .. controls (442.14,78.32) and (443.55,79.73) .. (443.55,81.47) .. controls (443.55,83.21) and (442.14,84.62) .. (440.4,84.62) .. controls (438.66,84.62) and (437.25,83.21) .. (437.25,81.47) -- cycle ;
%Shape: Circle [id:dp5447261253015552] 
\draw  [draw opacity=0][fill={rgb, 255:red, 150; green, 150; blue, 200 }  ,fill opacity=1 ] (476.25,100.47) .. controls (476.25,98.73) and (477.66,97.32) .. (479.4,97.32) .. controls (481.14,97.32) and (482.55,98.73) .. (482.55,100.47) .. controls (482.55,102.21) and (481.14,103.62) .. (479.4,103.62) .. controls (477.66,103.62) and (476.25,102.21) .. (476.25,100.47) -- cycle ;
%Shape: Circle [id:dp3140947789553281] 
\draw  [draw opacity=0][fill={rgb, 255:red, 150; green, 150; blue, 200 }  ,fill opacity=1 ] (496.25,100.47) .. controls (496.25,98.73) and (497.66,97.32) .. (499.4,97.32) .. controls (501.14,97.32) and (502.55,98.73) .. (502.55,100.47) .. controls (502.55,102.21) and (501.14,103.62) .. (499.4,103.62) .. controls (497.66,103.62) and (496.25,102.21) .. (496.25,100.47) -- cycle ;
%Shape: Circle [id:dp002604858357305062] 
\draw  [draw opacity=0][fill={rgb, 255:red, 150; green, 150; blue, 200 }  ,fill opacity=1 ] (457.25,120.47) .. controls (457.25,118.73) and (458.66,117.32) .. (460.4,117.32) .. controls (462.14,117.32) and (463.55,118.73) .. (463.55,120.47) .. controls (463.55,122.21) and (462.14,123.62) .. (460.4,123.62) .. controls (458.66,123.62) and (457.25,122.21) .. (457.25,120.47) -- cycle ;
%Shape: Circle [id:dp9736837661149214] 
\draw  [draw opacity=0][fill={rgb, 255:red, 150; green, 150; blue, 200 }  ,fill opacity=1 ] (476.25,120.47) .. controls (476.25,118.73) and (477.66,117.32) .. (479.4,117.32) .. controls (481.14,117.32) and (482.55,118.73) .. (482.55,120.47) .. controls (482.55,122.21) and (481.14,123.62) .. (479.4,123.62) .. controls (477.66,123.62) and (476.25,122.21) .. (476.25,120.47) -- cycle ;
%Shape: Circle [id:dp7802432258624914] 
\draw  [draw opacity=0][fill={rgb, 255:red, 150; green, 150; blue, 200 }  ,fill opacity=1 ] (496.25,80.47) .. controls (496.25,78.73) and (497.66,77.32) .. (499.4,77.32) .. controls (501.14,77.32) and (502.55,78.73) .. (502.55,80.47) .. controls (502.55,82.21) and (501.14,83.62) .. (499.4,83.62) .. controls (497.66,83.62) and (496.25,82.21) .. (496.25,80.47) -- cycle ;
%Shape: Circle [id:dp452947248340297] 
\draw  [draw opacity=0][fill={rgb, 255:red, 150; green, 150; blue, 200 }  ,fill opacity=1 ] (476.25,80.47) .. controls (476.25,78.73) and (477.66,77.32) .. (479.4,77.32) .. controls (481.14,77.32) and (482.55,78.73) .. (482.55,80.47) .. controls (482.55,82.21) and (481.14,83.62) .. (479.4,83.62) .. controls (477.66,83.62) and (476.25,82.21) .. (476.25,80.47) -- cycle ;
%Shape: Circle [id:dp7808962046002663] 
\draw  [draw opacity=0][fill={rgb, 255:red, 150; green, 150; blue, 200 }  ,fill opacity=1 ] (457.25,100.47) .. controls (457.25,98.73) and (458.66,97.32) .. (460.4,97.32) .. controls (462.14,97.32) and (463.55,98.73) .. (463.55,100.47) .. controls (463.55,102.21) and (462.14,103.62) .. (460.4,103.62) .. controls (458.66,103.62) and (457.25,102.21) .. (457.25,100.47) -- cycle ;
%Shape: Circle [id:dp31108075798706736] 
\draw  [draw opacity=0][fill={rgb, 255:red, 150; green, 150; blue, 200 }  ,fill opacity=1 ] (457.25,80.47) .. controls (457.25,78.73) and (458.66,77.32) .. (460.4,77.32) .. controls (462.14,77.32) and (463.55,78.73) .. (463.55,80.47) .. controls (463.55,82.21) and (462.14,83.62) .. (460.4,83.62) .. controls (458.66,83.62) and (457.25,82.21) .. (457.25,80.47) -- cycle ;
%Shape: Circle [id:dp18792970295589884] 
\draw  [draw opacity=0][fill={rgb, 255:red, 150; green, 150; blue, 200 }  ,fill opacity=1 ] (457.25,60.47) .. controls (457.25,58.73) and (458.66,57.32) .. (460.4,57.32) .. controls (462.14,57.32) and (463.55,58.73) .. (463.55,60.47) .. controls (463.55,62.21) and (462.14,63.62) .. (460.4,63.62) .. controls (458.66,63.62) and (457.25,62.21) .. (457.25,60.47) -- cycle ;
%Shape: Circle [id:dp7571439109948344] 
\draw  [draw opacity=0][fill={rgb, 255:red, 150; green, 150; blue, 200 }  ,fill opacity=1 ] (457.25,40.47) .. controls (457.25,38.73) and (458.66,37.32) .. (460.4,37.32) .. controls (462.14,37.32) and (463.55,38.73) .. (463.55,40.47) .. controls (463.55,42.21) and (462.14,43.62) .. (460.4,43.62) .. controls (458.66,43.62) and (457.25,42.21) .. (457.25,40.47) -- cycle ;
%Shape: Circle [id:dp6290233413175113] 
\draw  [draw opacity=0][fill={rgb, 255:red, 150; green, 150; blue, 200 }  ,fill opacity=1 ] (476.25,60.47) .. controls (476.25,58.73) and (477.66,57.32) .. (479.4,57.32) .. controls (481.14,57.32) and (482.55,58.73) .. (482.55,60.47) .. controls (482.55,62.21) and (481.14,63.62) .. (479.4,63.62) .. controls (477.66,63.62) and (476.25,62.21) .. (476.25,60.47) -- cycle ;
%Shape: Circle [id:dp7489552529822507] 
\draw  [draw opacity=0][fill={rgb, 255:red, 150; green, 150; blue, 200 }  ,fill opacity=1 ] (476.25,40.47) .. controls (476.25,38.73) and (477.66,37.32) .. (479.4,37.32) .. controls (481.14,37.32) and (482.55,38.73) .. (482.55,40.47) .. controls (482.55,42.21) and (481.14,43.62) .. (479.4,43.62) .. controls (477.66,43.62) and (476.25,42.21) .. (476.25,40.47) -- cycle ;
%Shape: Circle [id:dp9249446205432525] 
\draw  [draw opacity=0][fill={rgb, 255:red, 150; green, 150; blue, 200 }  ,fill opacity=1 ] (496.25,60.47) .. controls (496.25,58.73) and (497.66,57.32) .. (499.4,57.32) .. controls (501.14,57.32) and (502.55,58.73) .. (502.55,60.47) .. controls (502.55,62.21) and (501.14,63.62) .. (499.4,63.62) .. controls (497.66,63.62) and (496.25,62.21) .. (496.25,60.47) -- cycle ;
%Shape: Circle [id:dp30568804320957965] 
\draw  [draw opacity=0][fill={rgb, 255:red, 150; green, 150; blue, 200 }  ,fill opacity=1 ] (496.25,40.47) .. controls (496.25,38.73) and (497.66,37.32) .. (499.4,37.32) .. controls (501.14,37.32) and (502.55,38.73) .. (502.55,40.47) .. controls (502.55,42.21) and (501.14,43.62) .. (499.4,43.62) .. controls (497.66,43.62) and (496.25,42.21) .. (496.25,40.47) -- cycle ;
%Shape: Circle [id:dp18709420463064919] 
\draw  [draw opacity=0][fill={rgb, 255:red, 150; green, 150; blue, 200 }  ,fill opacity=1 ] (516.25,60.47) .. controls (516.25,58.73) and (517.66,57.32) .. (519.4,57.32) .. controls (521.14,57.32) and (522.55,58.73) .. (522.55,60.47) .. controls (522.55,62.21) and (521.14,63.62) .. (519.4,63.62) .. controls (517.66,63.62) and (516.25,62.21) .. (516.25,60.47) -- cycle ;
%Shape: Circle [id:dp9318600508597896] 
\draw  [draw opacity=0][fill={rgb, 255:red, 150; green, 150; blue, 200 }  ,fill opacity=1 ] (516.25,80.47) .. controls (516.25,78.73) and (517.66,77.32) .. (519.4,77.32) .. controls (521.14,77.32) and (522.55,78.73) .. (522.55,80.47) .. controls (522.55,82.21) and (521.14,83.62) .. (519.4,83.62) .. controls (517.66,83.62) and (516.25,82.21) .. (516.25,80.47) -- cycle ;
%Shape: Circle [id:dp6645113999142587] 
\draw  [draw opacity=0][fill={rgb, 255:red, 150; green, 150; blue, 200 }  ,fill opacity=1 ] (516.25,100.47) .. controls (516.25,98.73) and (517.66,97.32) .. (519.4,97.32) .. controls (521.14,97.32) and (522.55,98.73) .. (522.55,100.47) .. controls (522.55,102.21) and (521.14,103.62) .. (519.4,103.62) .. controls (517.66,103.62) and (516.25,102.21) .. (516.25,100.47) -- cycle ;
%Shape: Circle [id:dp6549147844019338] 
\draw  [draw opacity=0][fill={rgb, 255:red, 150; green, 150; blue, 200 }  ,fill opacity=1 ] (536.25,80.47) .. controls (536.25,78.73) and (537.66,77.32) .. (539.4,77.32) .. controls (541.14,77.32) and (542.55,78.73) .. (542.55,80.47) .. controls (542.55,82.21) and (541.14,83.62) .. (539.4,83.62) .. controls (537.66,83.62) and (536.25,82.21) .. (536.25,80.47) -- cycle ;
%Shape: Circle [id:dp38640527866970187] 
\draw  [draw opacity=0][fill={rgb, 255:red, 150; green, 150; blue, 200 }  ,fill opacity=1 ] (556.25,100.47) .. controls (556.25,98.73) and (557.66,97.32) .. (559.4,97.32) .. controls (561.14,97.32) and (562.55,98.73) .. (562.55,100.47) .. controls (562.55,102.21) and (561.14,103.62) .. (559.4,103.62) .. controls (557.66,103.62) and (556.25,102.21) .. (556.25,100.47) -- cycle ;
%Shape: Circle [id:dp6958792626372652] 
\draw  [draw opacity=0][fill={rgb, 255:red, 150; green, 150; blue, 200 }  ,fill opacity=1 ] (536.25,100.47) .. controls (536.25,98.73) and (537.66,97.32) .. (539.4,97.32) .. controls (541.14,97.32) and (542.55,98.73) .. (542.55,100.47) .. controls (542.55,102.21) and (541.14,103.62) .. (539.4,103.62) .. controls (537.66,103.62) and (536.25,102.21) .. (536.25,100.47) -- cycle ;
%Shape: Circle [id:dp594253151313616] 
\draw  [draw opacity=0][fill={rgb, 255:red, 150; green, 150; blue, 200 }  ,fill opacity=1 ] (556.25,120.47) .. controls (556.25,118.73) and (557.66,117.32) .. (559.4,117.32) .. controls (561.14,117.32) and (562.55,118.73) .. (562.55,120.47) .. controls (562.55,122.21) and (561.14,123.62) .. (559.4,123.62) .. controls (557.66,123.62) and (556.25,122.21) .. (556.25,120.47) -- cycle ;
%Shape: Circle [id:dp8083863600893206] 
\draw  [draw opacity=0][fill={rgb, 255:red, 150; green, 150; blue, 200 }  ,fill opacity=1 ] (536.25,120.47) .. controls (536.25,118.73) and (537.66,117.32) .. (539.4,117.32) .. controls (541.14,117.32) and (542.55,118.73) .. (542.55,120.47) .. controls (542.55,122.21) and (541.14,123.62) .. (539.4,123.62) .. controls (537.66,123.62) and (536.25,122.21) .. (536.25,120.47) -- cycle ;
%Shape: Circle [id:dp053037287785627774] 
\draw  [draw opacity=0][fill={rgb, 255:red, 150; green, 150; blue, 200 }  ,fill opacity=1 ] (516.25,120.47) .. controls (516.25,118.73) and (517.66,117.32) .. (519.4,117.32) .. controls (521.14,117.32) and (522.55,118.73) .. (522.55,120.47) .. controls (522.55,122.21) and (521.14,123.62) .. (519.4,123.62) .. controls (517.66,123.62) and (516.25,122.21) .. (516.25,120.47) -- cycle ;
%Shape: Circle [id:dp8125590483673584] 
\draw  [draw opacity=0][fill={rgb, 255:red, 150; green, 150; blue, 200 }  ,fill opacity=1 ] (496.25,120.47) .. controls (496.25,118.73) and (497.66,117.32) .. (499.4,117.32) .. controls (501.14,117.32) and (502.55,118.73) .. (502.55,120.47) .. controls (502.55,122.21) and (501.14,123.62) .. (499.4,123.62) .. controls (497.66,123.62) and (496.25,122.21) .. (496.25,120.47) -- cycle ;

% Text Node
\draw (185,86.7) node [anchor=north west][inner sep=0.75pt]  [font=\footnotesize] [align=left] {$\displaystyle p_{2}$};
% Text Node
\draw (224,118.5) node [anchor=north west][inner sep=0.75pt]   [align=left] {{\footnotesize $\displaystyle p_{1}$}};
% Text Node
\draw (142.69,144) node [anchor=north west][inner sep=0.75pt]  [color={rgb, 255:red, 128; green, 128; blue, 128 }  ,opacity=1 ]  {$N$};
% Text Node
\draw (401.4,143.87) node [anchor=north west][inner sep=0.75pt]  [color={rgb, 255:red, 128; green, 128; blue, 128 }  ,opacity=1 ]  {$M$};
% Text Node
\draw (561.4,147.02) node [anchor=north west][inner sep=0.75pt]  [color={rgb, 255:red, 0; green, 0; blue, 255 }  ,opacity=1 ]  {$\mathfrak{R}_{2}$};
% Text Node
\draw (411,21.47) node [anchor=north west][inner sep=0.75pt]  [color={rgb, 255:red, 0; green, 0; blue, 255 }  ,opacity=1 ]  {$\mathfrak{R}_{1}$};
% Text Node
\draw (267.3,25.7) node [anchor=north west][inner sep=0.75pt]  [color={rgb, 255:red, 208; green, 2; blue, 27 }  ,opacity=1 ]  {$\textcolor[rgb]{0,0,0.78}{\sigma }$};
% Text Node
\draw (524.9,25.37) node [anchor=north west][inner sep=0.75pt]  [color={rgb, 255:red, 0; green, 0; blue, 200 }  ,opacity=1 ]  {$\textcolor[rgb]{0,0,0.78}{\omega }$};
% Text Node
\draw (192.8,121.2) node [anchor=north west][inner sep=0.75pt]  [font=\scriptsize,color={rgb, 255:red, 155; green, 155; blue, 155 }  ,opacity=1 ]  {$0$};
% Text Node
\draw (451.2,121.6) node [anchor=north west][inner sep=0.75pt]  [font=\scriptsize,color={rgb, 255:red, 155; green, 155; blue, 155 }  ,opacity=1 ]  {$0$};
% Text Node
\draw (528,57.9) node [anchor=north west][inner sep=0.75pt]  [font=\scriptsize,color={rgb, 255:red, 150; green, 150; blue, 200 }  ,opacity=1 ]  {$S_{X}$};

\end{tikzpicture}

\end{center}
\end{example}

\begin{example}
\label{barX_example}
Let $X = \AA^m \times (\KK^\times)^{\widetilde m}$. In this case, we have the semigroup $S_X = \Zgezero^m \times \ZZ^{\widetilde m}$ in $M = \ZZ^{m + \widetilde m}$ spanning the cone $\omega$, and the dual cone $\sigma$ is spanned by the first $m$ vectors $p_1, \ldots, p_m$ of the standard basis. Then for $1 \le i \le m$, the set $\mathfrak{R}_i$ consists of vectors \[e = (e_1, \ldots, \underset{i}{-1}, \ldots, e_m, \tilde e_{m+1}, \ldots, \tilde e_{m+\widetilde m}), \; e_j \in \Zgezero, \; \tilde e_j \in \ZZ.\] 
\end{example}

\begin{example}
\label{cone_1example}
Let $X$ be an affine toric surface given by the cone~$\sigma$ with $p_1 = (0, 1)$ and $p_2 = (2, -1)$. Then the semigroup $S_X = M \cap \,\omega$ is generated by vectors $(1,0)$, $(1,1)$ and $(1,2)$. Denoting $a=\chi^{(1,0)}$, $b=\chi^{(1,1)}$ and $c=\chi^{(1,2)}$, we obtain $\KK[X] = \KK[a, b, c]$ with relation $ac = b^2$, i.e. the surface $X$ is isomorphic to the quadratic cone $\{ac =  b^2\} \subset \AA^3$. 
\begin{center}

\tikzset{every picture/.style={line width=0.75pt}} %set default line width to 0.75pt        

\begin{tikzpicture}[x=0.75pt,y=0.75pt,yscale=-1,xscale=1]
%uncomment if require: \path (0,202); %set diagram left start at 0, and has height of 202

%Shape: Grid [id:dp6982755066677742] 
\draw  [draw opacity=0] (120.69,20.6) -- (321.08,20.6) -- (321.08,182.11) -- (120.69,182.11) -- cycle ; \draw  [color={rgb, 255:red, 155; green, 155; blue, 155 }  ,draw opacity=0.5 ] (120.69,20.6) -- (120.69,182.11)(140.69,20.6) -- (140.69,182.11)(160.69,20.6) -- (160.69,182.11)(180.69,20.6) -- (180.69,182.11)(200.69,20.6) -- (200.69,182.11)(220.69,20.6) -- (220.69,182.11)(240.69,20.6) -- (240.69,182.11)(260.69,20.6) -- (260.69,182.11)(280.69,20.6) -- (280.69,182.11)(300.69,20.6) -- (300.69,182.11)(320.69,20.6) -- (320.69,182.11) ; \draw  [color={rgb, 255:red, 155; green, 155; blue, 155 }  ,draw opacity=0.5 ] (120.69,20.6) -- (321.08,20.6)(120.69,40.6) -- (321.08,40.6)(120.69,60.6) -- (321.08,60.6)(120.69,80.6) -- (321.08,80.6)(120.69,100.6) -- (321.08,100.6)(120.69,120.6) -- (321.08,120.6)(120.69,140.6) -- (321.08,140.6)(120.69,160.6) -- (321.08,160.6)(120.69,180.6) -- (321.08,180.6) ; \draw  [color={rgb, 255:red, 155; green, 155; blue, 155 }  ,draw opacity=0.5 ]  ;
%Straight Lines [id:da9344876774882471] 
\draw    (201.8,120.6) -- (201.8,102.6) ;
\draw [shift={(201.8,100.6)}, rotate = 450] [color={rgb, 255:red, 0; green, 0; blue, 0 }  ][line width=0.75]    (10.93,-3.29) .. controls (6.95,-1.4) and (3.31,-0.3) .. (0,0) .. controls (3.31,0.3) and (6.95,1.4) .. (10.93,3.29)   ;
%Straight Lines [id:da933883273469158] 
\draw    (201.8,120.6) -- (240.01,139.71) ;
\draw [shift={(241.8,140.6)}, rotate = 206.57] [color={rgb, 255:red, 0; green, 0; blue, 0 }  ][line width=0.75]    (10.93,-3.29) .. controls (6.95,-1.4) and (3.31,-0.3) .. (0,0) .. controls (3.31,0.3) and (6.95,1.4) .. (10.93,3.29)   ;
%Shape: Pie [id:dp33369354403950857] 
\draw  [draw opacity=0][fill={rgb, 255:red, 0; green, 0; blue, 200 }  ,fill opacity=0.2 ] (199.81,29.28) .. controls (200.11,29.28) and (200.4,29.28) .. (200.69,29.28) .. controls (263.54,29.28) and (314.48,70.17) .. (314.48,120.6) .. controls (314.48,138.47) and (308.09,155.14) .. (297.03,169.22) -- (200.69,120.6) -- cycle ;
%Shape: Pie [id:dp7771684511296122] 
\draw  [draw opacity=0][fill={rgb, 255:red, 0; green, 0; blue, 200 }  ,fill opacity=0.2 ] (501.32,35.55) .. controls (543.35,48.92) and (573.1,81.81) .. (573.19,120.29) -- (459.4,120.47) -- cycle ;
%Shape: Grid [id:dp6112333610107163] 
\draw  [draw opacity=0] (379.4,20.47) -- (579.79,20.47) -- (579.79,181.98) -- (379.4,181.98) -- cycle ; \draw  [color={rgb, 255:red, 155; green, 155; blue, 155 }  ,draw opacity=0.5 ] (379.4,20.47) -- (379.4,181.98)(399.4,20.47) -- (399.4,181.98)(419.4,20.47) -- (419.4,181.98)(439.4,20.47) -- (439.4,181.98)(459.4,20.47) -- (459.4,181.98)(479.4,20.47) -- (479.4,181.98)(499.4,20.47) -- (499.4,181.98)(519.4,20.47) -- (519.4,181.98)(539.4,20.47) -- (539.4,181.98)(559.4,20.47) -- (559.4,181.98)(579.4,20.47) -- (579.4,181.98) ; \draw  [color={rgb, 255:red, 155; green, 155; blue, 155 }  ,draw opacity=0.5 ] (379.4,20.47) -- (579.79,20.47)(379.4,40.47) -- (579.79,40.47)(379.4,60.47) -- (579.79,60.47)(379.4,80.47) -- (579.79,80.47)(379.4,100.47) -- (579.79,100.47)(379.4,120.47) -- (579.79,120.47)(379.4,140.47) -- (579.79,140.47)(379.4,160.47) -- (579.79,160.47)(379.4,180.47) -- (579.79,180.47) ; \draw  [color={rgb, 255:red, 155; green, 155; blue, 155 }  ,draw opacity=0.5 ]  ;
%Shape: Circle [id:dp0056244563274900905] 
\draw  [draw opacity=0][fill={rgb, 255:red, 0; green, 0; blue, 255 }  ,fill opacity=1 ] (496.25,140.47) .. controls (496.25,138.73) and (497.66,137.32) .. (499.4,137.32) .. controls (501.14,137.32) and (502.55,138.73) .. (502.55,140.47) .. controls (502.55,142.21) and (501.14,143.62) .. (499.4,143.62) .. controls (497.66,143.62) and (496.25,142.21) .. (496.25,140.47) -- cycle ;
%Shape: Circle [id:dp991604373238878] 
\draw  [draw opacity=0][fill={rgb, 255:red, 0; green, 0; blue, 255 }  ,fill opacity=1 ] (516.25,140.47) .. controls (516.25,138.73) and (517.66,137.32) .. (519.4,137.32) .. controls (521.14,137.32) and (522.55,138.73) .. (522.55,140.47) .. controls (522.55,142.21) and (521.14,143.62) .. (519.4,143.62) .. controls (517.66,143.62) and (516.25,142.21) .. (516.25,140.47) -- cycle ;
%Shape: Circle [id:dp1554906197836179] 
\draw  [draw opacity=0][fill={rgb, 255:red, 0; green, 0; blue, 255 }  ,fill opacity=1 ] (536.25,140.47) .. controls (536.25,138.73) and (537.66,137.32) .. (539.4,137.32) .. controls (541.14,137.32) and (542.55,138.73) .. (542.55,140.47) .. controls (542.55,142.21) and (541.14,143.62) .. (539.4,143.62) .. controls (537.66,143.62) and (536.25,142.21) .. (536.25,140.47) -- cycle ;
%Shape: Circle [id:dp32292586971899606] 
\draw  [draw opacity=0][fill={rgb, 255:red, 0; green, 0; blue, 255 }  ,fill opacity=1 ] (556.25,140.47) .. controls (556.25,138.73) and (557.66,137.32) .. (559.4,137.32) .. controls (561.14,137.32) and (562.55,138.73) .. (562.55,140.47) .. controls (562.55,142.21) and (561.14,143.62) .. (559.4,143.62) .. controls (557.66,143.62) and (556.25,142.21) .. (556.25,140.47) -- cycle ;
%Shape: Circle [id:dp45671272350298864] 
\draw  [draw opacity=0][fill={rgb, 255:red, 0; green, 0; blue, 255 }  ,fill opacity=1 ] (476.25,140.47) .. controls (476.25,138.73) and (477.66,137.32) .. (479.4,137.32) .. controls (481.14,137.32) and (482.55,138.73) .. (482.55,140.47) .. controls (482.55,142.21) and (481.14,143.62) .. (479.4,143.62) .. controls (477.66,143.62) and (476.25,142.21) .. (476.25,140.47) -- cycle ;
%Shape: Circle [id:dp4299835303446622] 
\draw  [draw opacity=0][fill={rgb, 255:red, 0; green, 0; blue, 255 }  ,fill opacity=1 ] (456.25,100.47) .. controls (456.25,98.73) and (457.66,97.32) .. (459.4,97.32) .. controls (461.14,97.32) and (462.55,98.73) .. (462.55,100.47) .. controls (462.55,102.21) and (461.14,103.62) .. (459.4,103.62) .. controls (457.66,103.62) and (456.25,102.21) .. (456.25,100.47) -- cycle ;
%Shape: Circle [id:dp32895175312004743] 
\draw  [draw opacity=0][fill={rgb, 255:red, 0; green, 0; blue, 255 }  ,fill opacity=1 ] (476.25,60.47) .. controls (476.25,58.73) and (477.66,57.32) .. (479.4,57.32) .. controls (481.14,57.32) and (482.55,58.73) .. (482.55,60.47) .. controls (482.55,62.21) and (481.14,63.62) .. (479.4,63.62) .. controls (477.66,63.62) and (476.25,62.21) .. (476.25,60.47) -- cycle ;
%Shape: Circle [id:dp9303612504326297] 
\draw  [draw opacity=0][fill={rgb, 255:red, 0; green, 0; blue, 255 }  ,fill opacity=1 ] (496.25,20.47) .. controls (496.25,18.73) and (497.66,17.32) .. (499.4,17.32) .. controls (501.14,17.32) and (502.55,18.73) .. (502.55,20.47) .. controls (502.55,22.21) and (501.14,23.62) .. (499.4,23.62) .. controls (497.66,23.62) and (496.25,22.21) .. (496.25,20.47) -- cycle ;
%Shape: Circle [id:dp25555467394283893] 
\draw  [draw opacity=0][fill={rgb, 255:red, 0; green, 0; blue, 255 }  ,fill opacity=1 ] (456.25,140.47) .. controls (456.25,138.73) and (457.66,137.32) .. (459.4,137.32) .. controls (461.14,137.32) and (462.55,138.73) .. (462.55,140.47) .. controls (462.55,142.21) and (461.14,143.62) .. (459.4,143.62) .. controls (457.66,143.62) and (456.25,142.21) .. (456.25,140.47) -- cycle ;
%Shape: Circle [id:dp5318753620448085] 
\draw  [draw opacity=0][fill={rgb, 255:red, 150; green, 150; blue, 200 }  ,fill opacity=1 ] (496.25,120.47) .. controls (496.25,118.73) and (497.66,117.32) .. (499.4,117.32) .. controls (501.14,117.32) and (502.55,118.73) .. (502.55,120.47) .. controls (502.55,122.21) and (501.14,123.62) .. (499.4,123.62) .. controls (497.66,123.62) and (496.25,122.21) .. (496.25,120.47) -- cycle ;
%Shape: Circle [id:dp46141638181665945] 
\draw  [draw opacity=0][fill={rgb, 255:red, 150; green, 150; blue, 200 }  ,fill opacity=1 ] (516.25,120.47) .. controls (516.25,118.73) and (517.66,117.32) .. (519.4,117.32) .. controls (521.14,117.32) and (522.55,118.73) .. (522.55,120.47) .. controls (522.55,122.21) and (521.14,123.62) .. (519.4,123.62) .. controls (517.66,123.62) and (516.25,122.21) .. (516.25,120.47) -- cycle ;
%Shape: Circle [id:dp11762435383844427] 
\draw  [draw opacity=0][fill={rgb, 255:red, 150; green, 150; blue, 200 }  ,fill opacity=1 ] (516.25,80.47) .. controls (516.25,78.73) and (517.66,77.32) .. (519.4,77.32) .. controls (521.14,77.32) and (522.55,78.73) .. (522.55,80.47) .. controls (522.55,82.21) and (521.14,83.62) .. (519.4,83.62) .. controls (517.66,83.62) and (516.25,82.21) .. (516.25,80.47) -- cycle ;
%Shape: Circle [id:dp3445391350640019] 
\draw  [draw opacity=0][fill={rgb, 255:red, 150; green, 150; blue, 200 }  ,fill opacity=1 ] (516.25,100.47) .. controls (516.25,98.73) and (517.66,97.32) .. (519.4,97.32) .. controls (521.14,97.32) and (522.55,98.73) .. (522.55,100.47) .. controls (522.55,102.21) and (521.14,103.62) .. (519.4,103.62) .. controls (517.66,103.62) and (516.25,102.21) .. (516.25,100.47) -- cycle ;
%Shape: Circle [id:dp8576875991631305] 
\draw  [draw opacity=0][fill={rgb, 255:red, 150; green, 150; blue, 200 }  ,fill opacity=1 ] (496.25,100.47) .. controls (496.25,98.73) and (497.66,97.32) .. (499.4,97.32) .. controls (501.14,97.32) and (502.55,98.73) .. (502.55,100.47) .. controls (502.55,102.21) and (501.14,103.62) .. (499.4,103.62) .. controls (497.66,103.62) and (496.25,102.21) .. (496.25,100.47) -- cycle ;
%Shape: Circle [id:dp03738011652933482] 
\draw  [draw opacity=0][fill={rgb, 255:red, 150; green, 150; blue, 200 }  ,fill opacity=1 ] (496.25,80.47) .. controls (496.25,78.73) and (497.66,77.32) .. (499.4,77.32) .. controls (501.14,77.32) and (502.55,78.73) .. (502.55,80.47) .. controls (502.55,82.21) and (501.14,83.62) .. (499.4,83.62) .. controls (497.66,83.62) and (496.25,82.21) .. (496.25,80.47) -- cycle ;
%Shape: Circle [id:dp7674479414786559] 
\draw  [draw opacity=0][fill={rgb, 255:red, 150; green, 150; blue, 200 }  ,fill opacity=1 ] (516.25,60.47) .. controls (516.25,58.73) and (517.66,57.32) .. (519.4,57.32) .. controls (521.14,57.32) and (522.55,58.73) .. (522.55,60.47) .. controls (522.55,62.21) and (521.14,63.62) .. (519.4,63.62) .. controls (517.66,63.62) and (516.25,62.21) .. (516.25,60.47) -- cycle ;
%Shape: Circle [id:dp12812157225353316] 
\draw  [draw opacity=0][fill={rgb, 255:red, 150; green, 150; blue, 200 }  ,fill opacity=1 ] (496.25,60.47) .. controls (496.25,58.73) and (497.66,57.32) .. (499.4,57.32) .. controls (501.14,57.32) and (502.55,58.73) .. (502.55,60.47) .. controls (502.55,62.21) and (501.14,63.62) .. (499.4,63.62) .. controls (497.66,63.62) and (496.25,62.21) .. (496.25,60.47) -- cycle ;
%Shape: Circle [id:dp7917878714461317] 
\draw  [draw opacity=0][fill={rgb, 255:red, 150; green, 150; blue, 200 }  ,fill opacity=1 ] (496.25,41.47) .. controls (496.25,39.73) and (497.66,38.32) .. (499.4,38.32) .. controls (501.14,38.32) and (502.55,39.73) .. (502.55,41.47) .. controls (502.55,43.21) and (501.14,44.62) .. (499.4,44.62) .. controls (497.66,44.62) and (496.25,43.21) .. (496.25,41.47) -- cycle ;
%Shape: Circle [id:dp5756402128318914] 
\draw  [draw opacity=0][fill={rgb, 255:red, 150; green, 150; blue, 200 }  ,fill opacity=1 ] (456.25,120.47) .. controls (456.25,118.73) and (457.66,117.32) .. (459.4,117.32) .. controls (461.14,117.32) and (462.55,118.73) .. (462.55,120.47) .. controls (462.55,122.21) and (461.14,123.62) .. (459.4,123.62) .. controls (457.66,123.62) and (456.25,122.21) .. (456.25,120.47) -- cycle ;
%Shape: Circle [id:dp8940945124906445] 
\draw  [draw opacity=0][fill={rgb, 255:red, 150; green, 150; blue, 200 }  ,fill opacity=1 ] (536.25,80.47) .. controls (536.25,78.73) and (537.66,77.32) .. (539.4,77.32) .. controls (541.14,77.32) and (542.55,78.73) .. (542.55,80.47) .. controls (542.55,82.21) and (541.14,83.62) .. (539.4,83.62) .. controls (537.66,83.62) and (536.25,82.21) .. (536.25,80.47) -- cycle ;
%Shape: Circle [id:dp03353353298364281] 
\draw  [draw opacity=0][fill={rgb, 255:red, 150; green, 150; blue, 200 }  ,fill opacity=1 ] (536.25,100.47) .. controls (536.25,98.73) and (537.66,97.32) .. (539.4,97.32) .. controls (541.14,97.32) and (542.55,98.73) .. (542.55,100.47) .. controls (542.55,102.21) and (541.14,103.62) .. (539.4,103.62) .. controls (537.66,103.62) and (536.25,102.21) .. (536.25,100.47) -- cycle ;
%Shape: Circle [id:dp052023493670326104] 
\draw  [draw opacity=0][fill={rgb, 255:red, 150; green, 150; blue, 200 }  ,fill opacity=1 ] (536.25,120.47) .. controls (536.25,118.73) and (537.66,117.32) .. (539.4,117.32) .. controls (541.14,117.32) and (542.55,118.73) .. (542.55,120.47) .. controls (542.55,122.21) and (541.14,123.62) .. (539.4,123.62) .. controls (537.66,123.62) and (536.25,122.21) .. (536.25,120.47) -- cycle ;
%Shape: Circle [id:dp6945059869576837] 
\draw  [draw opacity=0][fill={rgb, 255:red, 150; green, 150; blue, 200 }  ,fill opacity=1 ] (556.25,100.47) .. controls (556.25,98.73) and (557.66,97.32) .. (559.4,97.32) .. controls (561.14,97.32) and (562.55,98.73) .. (562.55,100.47) .. controls (562.55,102.21) and (561.14,103.62) .. (559.4,103.62) .. controls (557.66,103.62) and (556.25,102.21) .. (556.25,100.47) -- cycle ;
%Shape: Circle [id:dp5193347843285518] 
\draw  [draw opacity=0][fill={rgb, 255:red, 150; green, 150; blue, 200 }  ,fill opacity=1 ] (556.25,120.47) .. controls (556.25,118.73) and (557.66,117.32) .. (559.4,117.32) .. controls (561.14,117.32) and (562.55,118.73) .. (562.55,120.47) .. controls (562.55,122.21) and (561.14,123.62) .. (559.4,123.62) .. controls (557.66,123.62) and (556.25,122.21) .. (556.25,120.47) -- cycle ;
%Shape: Circle [id:dp860883476558796] 
\draw  [color={rgb, 255:red, 50; green, 50; blue, 100 }  ,draw opacity=1 ][fill={rgb, 255:red, 150; green, 150; blue, 200 }  ,fill opacity=1 ] (476.25,120.47) .. controls (476.25,118.73) and (477.66,117.32) .. (479.4,117.32) .. controls (481.14,117.32) and (482.55,118.73) .. (482.55,120.47) .. controls (482.55,122.21) and (481.14,123.62) .. (479.4,123.62) .. controls (477.66,123.62) and (476.25,122.21) .. (476.25,120.47) -- cycle ;
%Shape: Circle [id:dp16238667800589557] 
\draw  [color={rgb, 255:red, 50; green, 50; blue, 100 }  ,draw opacity=1 ][fill={rgb, 255:red, 150; green, 150; blue, 200 }  ,fill opacity=1 ] (476.25,100.47) .. controls (476.25,98.73) and (477.66,97.32) .. (479.4,97.32) .. controls (481.14,97.32) and (482.55,98.73) .. (482.55,100.47) .. controls (482.55,102.21) and (481.14,103.62) .. (479.4,103.62) .. controls (477.66,103.62) and (476.25,102.21) .. (476.25,100.47) -- cycle ;
%Shape: Circle [id:dp5920153649602089] 
\draw  [color={rgb, 255:red, 50; green, 50; blue, 100 }  ,draw opacity=1 ][fill={rgb, 255:red, 150; green, 150; blue, 200 }  ,fill opacity=1 ] (476.25,80.47) .. controls (476.25,78.73) and (477.66,77.32) .. (479.4,77.32) .. controls (481.14,77.32) and (482.55,78.73) .. (482.55,80.47) .. controls (482.55,82.21) and (481.14,83.62) .. (479.4,83.62) .. controls (477.66,83.62) and (476.25,82.21) .. (476.25,80.47) -- cycle ;

% Text Node
\draw (140,84.2) node [anchor=north west][inner sep=0.75pt]  [font=\footnotesize] [align=left] {$\displaystyle p_{1} =( 0,1)$};
% Text Node
\draw (162,146) node [anchor=north west][inner sep=0.75pt]   [align=left] {{\footnotesize $\displaystyle p_{2} \ =\ ( 2,\ -1)$}};
% Text Node
\draw (142.69,44) node [anchor=north west][inner sep=0.75pt]  [color={rgb, 255:red, 128; green, 128; blue, 128 }  ,opacity=1 ]  {$N$};
% Text Node
\draw (401.4,43.87) node [anchor=north west][inner sep=0.75pt]  [color={rgb, 255:red, 128; green, 128; blue, 128 }  ,opacity=1 ]  {$M$};
% Text Node
\draw (561.4,147.02) node [anchor=north west][inner sep=0.75pt]  [color={rgb, 255:red, 0; green, 0; blue, 255 }  ,opacity=1 ]  {$\mathfrak{R}_{1}$};
% Text Node
\draw (465,21.47) node [anchor=north west][inner sep=0.75pt]  [color={rgb, 255:red, 0; green, 0; blue, 255 }  ,opacity=1 ]  {$\mathfrak{R}_{2}$};
% Text Node
\draw (286.8,40.2) node [anchor=north west][inner sep=0.75pt]  [color={rgb, 255:red, 208; green, 2; blue, 27 }  ,opacity=1 ]  {$\textcolor[rgb]{0,0,0.78}{\sigma }$};
% Text Node
\draw (545.2,39.8) node [anchor=north west][inner sep=0.75pt]  [color={rgb, 255:red, 0; green, 0; blue, 200 }  ,opacity=1 ]  {$\textcolor[rgb]{0,0,0.78}{\omega }$};
% Text Node
\draw (192.8,121.2) node [anchor=north west][inner sep=0.75pt]  [font=\scriptsize,color={rgb, 255:red, 155; green, 155; blue, 155 }  ,opacity=1 ]  {$0$};
% Text Node
\draw (450.4,122) node [anchor=north west][inner sep=0.75pt]  [font=\scriptsize,color={rgb, 255:red, 155; green, 155; blue, 155 }  ,opacity=1 ]  {$0$};
% Text Node
\draw (385.4,112.72) node [anchor=north west][inner sep=0.75pt]  [font=\tiny]  {$a=\chi ^{( 1,0)}$};
% Text Node
\draw (385.4,93.52) node [anchor=north west][inner sep=0.75pt]  [font=\tiny]  {$b=\chi ^{( 1,1)}$};
% Text Node
\draw (385.4,73.92) node [anchor=north west][inner sep=0.75pt]  [font=\tiny]  {$c=\chi ^{( 1,2)}$};
% Text Node
\draw (531,59.9) node [anchor=north west][inner sep=0.75pt]  [font=\scriptsize,color={rgb, 255:red, 150; green, 150; blue, 200 }  ,opacity=1 ]  {$S_{X}$};

\end{tikzpicture}

\end{center}
The sets of Demazure roots are 
\begin{gather*}
\mathfrak{R}_1 = \{(l, -1) \mid l \in \Zgezero\},\\
\mathfrak{R}_2 = \{(l_1, l_2) \mid 2l_1 - l_2 = -1, \, l_2 \ge 0\} = \{(l, 1 + 2l) \mid l \in \Zgezero\}.
\end{gather*}
\end{example}

\begin{example}
\label{dim3_1example}
Let $X$ be an affine toric variety given by the cone~$\sigma$ with $p_1 = (1, 0, 0)$, $p_2 = (0, 1, 0)$, $p_3 = (1, 0, 1)$ and $p_4 = (0, 1, 1)$. Then the semigroup $S_X = M \cap \,\omega$ is generated by vectors $(1,0,0)$, $(0,1,0)$, $(0,0,1)$ and $(1,1,-1)$. Denoting $a=\chi^{(1,0,0)}$, $b=\chi^{(0,1,0)}$, $c=\chi^{(0,0,1)}$ and $d=\chi^{(1,1,-1)}$, we obtain $\KK[X] = \KK[a, b, c, d]$ with relation $ab = cd$, i.e. the variety $X$ is isomorphic to the quadratic cone $\{ab =  cd\} \subset \AA^4$. 
\begin{center}

\tikzset{every picture/.style={line width=0.75pt}} %set default line width to 0.75pt        

\begin{tikzpicture}[x=0.75pt,y=0.75pt,yscale=-1,xscale=1]
%uncomment if require: \path (0,202); %set diagram left start at 0, and has height of 202

%Straight Lines [id:da6210990491722879] 
\draw    (200,120) -- (186.8,111.12) ;
\draw [shift={(185.14,110)}, rotate = 393.94] [color={rgb, 255:red, 0; green, 0; blue, 0 }  ][line width=0.75]    (10.93,-3.29) .. controls (6.95,-1.4) and (3.31,-0.3) .. (0,0) .. controls (3.31,0.3) and (6.95,1.4) .. (10.93,3.29)   ;
%Shape: Pie [id:dp6817307387787472] 
\draw  [draw opacity=0][fill={rgb, 255:red, 0; green, 0; blue, 200 }  ,fill opacity=0.25 ] (249.6,70.61) .. controls (262.2,83.26) and (269.99,100.7) .. (270,119.96) -- (200,120) -- cycle ;
%Shape: Pie [id:dp18476094557330014] 
\draw  [draw opacity=0][fill={rgb, 255:red, 0; green, 0; blue, 200 }  ,fill opacity=0.3 ] (145.84,84.85) .. controls (161.03,68.38) and (184.21,57.56) .. (208.76,57.56) .. controls (224.93,57.56) and (239.18,62.25) .. (250.04,70.15) -- (200,120) -- cycle ;
%Shape: Pie [id:dp07303078496067528] 
\draw  [draw opacity=0][fill={rgb, 255:red, 0; green, 0; blue, 200 }  ,fill opacity=0.2 ] (270,119.96) .. controls (257.71,138.29) and (216.49,153.14) .. (177.88,153.14) .. controls (168.77,153.14) and (160.62,152.31) .. (153.64,150.81) -- (200.04,119.9) -- cycle ;
%Shape: Pie [id:dp27366829528473824] 
\draw  [draw opacity=0][fill={rgb, 255:red, 0; green, 0; blue, 200 }  ,fill opacity=0.27 ] (152.68,151) .. controls (146.16,141.91) and (141.68,131.32) .. (140.11,120) .. controls (138.31,106.98) and (140.63,94.92) .. (146.05,85) -- (200,120) -- cycle ;
%Straight Lines [id:da7780288336242511] 
\draw    (200,120) -- (218.59,101.41) ;
\draw [shift={(220,100)}, rotate = 495] [color={rgb, 255:red, 0; green, 0; blue, 0 }  ][line width=0.75]    (10.93,-3.29) .. controls (6.95,-1.4) and (3.31,-0.3) .. (0,0) .. controls (3.31,0.3) and (6.95,1.4) .. (10.93,3.29)   ;
%Straight Lines [id:da21221532233029805] 
\draw [color={rgb, 255:red, 155; green, 155; blue, 155 }  ,draw opacity=1 ][line width=0.75]    (200,120) -- (277.81,119.96) ;
\draw [shift={(279.81,119.96)}, rotate = 539.97] [color={rgb, 255:red, 155; green, 155; blue, 155 }  ,draw opacity=1 ][line width=0.75]    (7.65,-2.3) .. controls (4.86,-0.97) and (2.31,-0.21) .. (0,0) .. controls (2.31,0.21) and (4.86,0.98) .. (7.65,2.3)   ;
%Straight Lines [id:da2290679486480749] 
\draw    (200,120) -- (217.14,120) ;
\draw [shift={(219.14,120)}, rotate = 180] [color={rgb, 255:red, 0; green, 0; blue, 0 }  ][line width=0.75]    (10.93,-3.29) .. controls (6.95,-1.4) and (3.31,-0.3) .. (0,0) .. controls (3.31,0.3) and (6.95,1.4) .. (10.93,3.29)   ;
%Straight Lines [id:da4788546789954551] 
\draw [color={rgb, 255:red, 155; green, 155; blue, 155 }  ,draw opacity=1 ][line width=0.75]    (200,120) -- (141.48,158.85) ;
\draw [shift={(139.81,159.96)}, rotate = 326.41999999999996] [color={rgb, 255:red, 155; green, 155; blue, 155 }  ,draw opacity=1 ][line width=0.75]    (7.65,-2.3) .. controls (4.86,-0.97) and (2.31,-0.21) .. (0,0) .. controls (2.31,0.21) and (4.86,0.98) .. (7.65,2.3)   ;
%Straight Lines [id:da1636873855468981] 
\draw    (200,120) -- (186.26,128.65) ;
\draw [shift={(184.57,129.71)}, rotate = 327.8] [color={rgb, 255:red, 0; green, 0; blue, 0 }  ][line width=0.75]    (10.93,-3.29) .. controls (6.95,-1.4) and (3.31,-0.3) .. (0,0) .. controls (3.31,0.3) and (6.95,1.4) .. (10.93,3.29)   ;
%Straight Lines [id:da544887166381405] 
\draw [color={rgb, 255:red, 155; green, 155; blue, 155 }  ,draw opacity=1 ][line width=0.75]    (200.17,118.02) -- (199.82,41.9) ;
\draw [shift={(199.81,39.9)}, rotate = 449.74] [color={rgb, 255:red, 155; green, 155; blue, 155 }  ,draw opacity=1 ][line width=0.75]    (7.65,-2.3) .. controls (4.86,-0.97) and (2.31,-0.21) .. (0,0) .. controls (2.31,0.21) and (4.86,0.98) .. (7.65,2.3)   ;
%Straight Lines [id:da7093829040324346] 
\draw [color={rgb, 255:red, 155; green, 155; blue, 155 }  ,draw opacity=0.5 ]   (185.14,110) -- (199.83,99.95) ;
%Straight Lines [id:da9992819962885446] 
\draw [color={rgb, 255:red, 155; green, 155; blue, 155 }  ,draw opacity=0.5 ]   (205.14,130) -- (219.83,119.95) ;
%Straight Lines [id:da7914034067484244] 
\draw [color={rgb, 255:red, 155; green, 155; blue, 155 }  ,draw opacity=0.5 ]   (199.83,99.95) -- (220,100) ;
%Straight Lines [id:da9303163926425515] 
\draw [color={rgb, 255:red, 155; green, 155; blue, 155 }  ,draw opacity=0.5 ]   (205.14,130) -- (184.57,129.71) ;
%Straight Lines [id:da9541958519084066] 
\draw [color={rgb, 255:red, 155; green, 155; blue, 155 }  ,draw opacity=0.5 ]   (184.57,129.71) -- (185.14,110) ;
%Straight Lines [id:da759465398494773] 
\draw [color={rgb, 255:red, 155; green, 155; blue, 155 }  ,draw opacity=0.5 ]   (219.83,119.95) -- (220,100) ;
%Straight Lines [id:da4044654405955148] 
\draw    (460.92,120.74) -- (465.48,147.96) ;
\draw [shift={(465.81,149.94)}, rotate = 260.51] [color={rgb, 255:red, 0; green, 0; blue, 0 }  ][line width=0.75]    (10.93,-3.29) .. controls (6.95,-1.4) and (3.31,-0.3) .. (0,0) .. controls (3.31,0.3) and (6.95,1.4) .. (10.93,3.29)   ;
%Shape: Pie [id:dp3762688329747361] 
\draw  [draw opacity=0][fill={rgb, 255:red, 0; green, 0; blue, 200 }  ,fill opacity=0.28 ] (460.38,50.5) .. controls (460.43,50.5) and (460.48,50.5) .. (460.52,50.5) .. controls (499.17,50.5) and (530.5,81.82) .. (530.52,120.47) -- (460.52,120.5) -- cycle ;
%Shape: Pie [id:dp3361547374335325] 
\draw  [draw opacity=0][fill={rgb, 255:red, 0; green, 0; blue, 200 }  ,fill opacity=0.23 ] (469.47,175.44) .. controls (443.52,181.99) and (417.88,174.75) .. (404.53,157.24) -- (460.91,120.34) -- cycle ;
%Shape: Pie [id:dp42066363893829606] 
\draw  [draw opacity=0][fill={rgb, 255:red, 0; green, 0; blue, 200 }  ,fill opacity=0.2 ] (530.52,121.06) .. controls (530.53,149.09) and (503.89,172.19) .. (469.47,175.44) -- (461.05,121) -- cycle ;
%Shape: Pie [id:dp11660875398351744] 
\draw  [draw opacity=0][fill={rgb, 255:red, 0; green, 0; blue, 200 }  ,fill opacity=0.3 ] (405.36,157.21) .. controls (401.12,146.58) and (399.44,134.1) .. (401.04,120.74) .. controls (405.3,85.25) and (431.28,55.91) .. (461.01,51.19) -- (460.92,120.74) -- cycle ;
%Straight Lines [id:da6790002883024546] 
\draw [color={rgb, 255:red, 155; green, 155; blue, 155 }  ,draw opacity=1 ][line width=0.75]    (460.52,120.5) -- (538.34,120.46) ;
\draw [shift={(540.34,120.46)}, rotate = 539.97] [color={rgb, 255:red, 155; green, 155; blue, 155 }  ,draw opacity=1 ][line width=0.75]    (7.65,-2.3) .. controls (4.86,-0.97) and (2.31,-0.21) .. (0,0) .. controls (2.31,0.21) and (4.86,0.98) .. (7.65,2.3)   ;
%Straight Lines [id:da9457629545834727] 
\draw    (460.52,120.5) -- (477.67,120.5) ;
\draw [shift={(479.67,120.5)}, rotate = 180] [color={rgb, 255:red, 0; green, 0; blue, 0 }  ][line width=0.75]    (10.93,-3.29) .. controls (6.95,-1.4) and (3.31,-0.3) .. (0,0) .. controls (3.31,0.3) and (6.95,1.4) .. (10.93,3.29)   ;
%Straight Lines [id:da1037847126699385] 
\draw [color={rgb, 255:red, 155; green, 155; blue, 155 }  ,draw opacity=1 ][line width=0.75]    (460.52,120.5) -- (402,159.36) ;
\draw [shift={(400.34,160.46)}, rotate = 326.41999999999996] [color={rgb, 255:red, 155; green, 155; blue, 155 }  ,draw opacity=1 ][line width=0.75]    (7.65,-2.3) .. controls (4.86,-0.97) and (2.31,-0.21) .. (0,0) .. controls (2.31,0.21) and (4.86,0.98) .. (7.65,2.3)   ;
%Straight Lines [id:da9394657723833548] 
\draw    (460.52,120.5) -- (446.79,129.15) ;
\draw [shift={(445.1,130.22)}, rotate = 327.8] [color={rgb, 255:red, 0; green, 0; blue, 0 }  ][line width=0.75]    (10.93,-3.29) .. controls (6.95,-1.4) and (3.31,-0.3) .. (0,0) .. controls (3.31,0.3) and (6.95,1.4) .. (10.93,3.29)   ;
%Straight Lines [id:da6115730215710669] 
\draw [color={rgb, 255:red, 155; green, 155; blue, 155 }  ,draw opacity=0.5 ]   (445.67,110.5) -- (460.36,100.46) ;
%Straight Lines [id:da00986607624470226] 
\draw [color={rgb, 255:red, 155; green, 155; blue, 155 }  ,draw opacity=0.5 ][line width=0.75]    (465.67,130.5) -- (480.36,120.46) ;
%Straight Lines [id:da907017286002187] 
\draw [color={rgb, 255:red, 155; green, 155; blue, 155 }  ,draw opacity=0.5 ]   (460.36,100.46) -- (480.52,100.5) ;
%Straight Lines [id:da08471727325501788] 
\draw [color={rgb, 255:red, 155; green, 155; blue, 155 }  ,draw opacity=0.5 ]   (465.67,130.5) -- (445.1,130.22) ;
%Straight Lines [id:da19637999673389772] 
\draw [color={rgb, 255:red, 155; green, 155; blue, 155 }  ,draw opacity=0.5 ]   (445.1,130.22) -- (445.67,110.5) ;
%Straight Lines [id:da9439173742747486] 
\draw [color={rgb, 255:red, 155; green, 155; blue, 155 }  ,draw opacity=0.5 ]   (480.36,120.46) -- (480.52,100.5) ;
%Straight Lines [id:da8985525239840233] 
\draw [color={rgb, 255:red, 155; green, 155; blue, 155 }  ,draw opacity=1 ][line width=0.75]    (460.52,120.5) -- (460.34,42.4) ;
\draw [shift={(460.34,40.4)}, rotate = 449.87] [color={rgb, 255:red, 155; green, 155; blue, 155 }  ,draw opacity=1 ][line width=0.75]    (7.65,-2.3) .. controls (4.86,-0.97) and (2.31,-0.21) .. (0,0) .. controls (2.31,0.21) and (4.86,0.98) .. (7.65,2.3)   ;
%Straight Lines [id:da19858451237998986] 
\draw    (460.52,120.5) -- (460.38,102.46) ;
\draw [shift={(460.36,100.46)}, rotate = 449.53] [color={rgb, 255:red, 0; green, 0; blue, 0 }  ][line width=0.75]    (10.93,-3.29) .. controls (6.95,-1.4) and (3.31,-0.3) .. (0,0) .. controls (3.31,0.3) and (6.95,1.4) .. (10.93,3.29)   ;
%Straight Lines [id:da05554445836183697] 
\draw [color={rgb, 255:red, 155; green, 155; blue, 155 }  ,draw opacity=0.5 ]   (465.81,149.94) -- (465.97,129.98) ;

% Text Node
\draw (120.83,124.89) node [anchor=north west][inner sep=0.75pt]  [font=\tiny] [align=left] {$\displaystyle p_{1} =( 1,0,0)$};
% Text Node
\draw (216.6,121) node [anchor=north west][inner sep=0.75pt]  [font=\tiny] [align=left] {$\displaystyle p_{2} =( 0,1,0)$};
% Text Node
\draw (133.69,44) node [anchor=north west][inner sep=0.75pt]  [color={rgb, 255:red, 128; green, 128; blue, 128 }  ,opacity=1 ]  {$N$};
% Text Node
\draw (399.4,45.07) node [anchor=north west][inner sep=0.75pt]  [color={rgb, 255:red, 128; green, 128; blue, 128 }  ,opacity=1 ]  {$M$};
% Text Node
\draw (263.24,64.87) node [anchor=north west][inner sep=0.75pt]  [color={rgb, 255:red, 208; green, 2; blue, 27 }  ,opacity=1 ]  {$\textcolor[rgb]{0,0,0.78}{\sigma }$};
% Text Node
\draw (512.4,53.8) node [anchor=north west][inner sep=0.75pt]  [color={rgb, 255:red, 0; green, 0; blue, 200 }  ,opacity=1 ]  {$\textcolor[rgb]{0,0,0.78}{\omega }$};
% Text Node
\draw (199.04,121.85) node [anchor=north west][inner sep=0.75pt]  [font=\tiny,color={rgb, 255:red, 155; green, 155; blue, 155 }  ,opacity=1 ]  {$0$};
% Text Node
\draw (119.37,103.55) node [anchor=north west][inner sep=0.75pt]  [font=\tiny] [align=left] {$\displaystyle p_{3} =( 1,0,1)$};
% Text Node
\draw (222.82,96.77) node [anchor=north west][inner sep=0.75pt]  [font=\tiny] [align=left] {$\displaystyle p_{4} =( 0,1,1)$};
% Text Node
\draw (452.34,114.8) node [anchor=north west][inner sep=0.75pt]  [font=\tiny,color={rgb, 255:red, 155; green, 155; blue, 155 }  ,opacity=1 ]  {$0$};

\end{tikzpicture}

\end{center}
The sets of Demazure roots equal
\begin{gather*}
\mathfrak{R}_1 = \{(-1, l_2, l_3) \mid l_2 \in \Zgezero, l_3 \in \ZZ_{>0}\}, \quad \mathfrak{R}_2 = \{(l_1, -1, l_3) \mid l_1 \in \Zgezero, l_3 \in \ZZ_{>0}\},\\
\mathfrak{R}_3 = \{(l_1, l_2, l_1 + 1) \mid l_1, l_2 \in \Zgezero\}, \quad \mathfrak{R}_4 = \{(l_1, l_2, l_2 + 1) \mid l_1, l_2 \in \Zgezero\}.
\end{gather*}
\end{example}

%%%%%%%%%%%%%%%%%%%%%%%%%%%%%%%%%%%%%%%%%%%%%%
\section{Affine monoids}
\label{monoid_sec}

\begin{definition}
An \emph{algebraic monoid} is an irreducible algebraic variety~$X$ with an associative multiplication $\mu\colon X \times X \to X$, $(x, y) \mapsto x*y$, which is a morphism of algebraic varieties and admits an element $e \in X$ called the \emph{unit} such that $e*x = x*e = x$ for any $x \in X$.  
\end{definition}

Denote by $G(X)$ the set of invertible elements of a monoid~$X$. It is a connected algebraic group, open in~$X$, see~\cite[Theorem~1]{Ri1}.

\begin{definition}
By a \emph{group embedding} of an algebraic group $G$ we mean an irreducible algebraic variety~$X$ with an open embedding $G \hookrightarrow X$ such that both actions by left and right multiplications of~$G$ on itself can be extended to actions of~$G$ on~$X$. 
\end{definition}

A monoid $X$ is called \emph{affine} if the underlying variety is affine. It is well known that any quasi-affine group is affine, see for example~\cite[Theorem 7.5.3]{FR}. It follows that for affine~$X$, the group $G(X)$ is affine as well. The converse statement is proved in~\cite{Ri2}. 

If $X$ is a monoid, then the embedding $G(X) \hookrightarrow X$ is a group embedding. It is known that for affine monoids the converse is true, i.e. the following proposition holds, see~\cite[Theorem~1]{Vi} for characteristic zero and~\cite[Proposition~1]{Ri1} for the general case. 

\begin{proposition}
Let $G$ be an algebraic group, and $X$ be an affine group embedding of~$G$. Then the product on~$G$ extends to a product $\mu\colon X \times X \to X$ in such a way that $G(X) = G$. 
\end{proposition}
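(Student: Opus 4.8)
The plan is to pass to coordinate rings and construct the comultiplication directly. Since $G$ is open and dense in the irreducible variety $X$, restriction gives an injection $\KK[X] \hookrightarrow \KK[G]$, and for affine $X$ a morphism $\mu\colon X \times X \to X$ extending the group law is the same datum as an algebra homomorphism $\mu^*\colon \KK[X] \to \KK[X] \otimes \KK[X]$ whose composition with the inclusion into $\KK[G] \otimes \KK[G]$ coincides with the comultiplication $\Delta\colon \KK[G] \to \KK[G] \otimes \KK[G]$ dual to the group law. Thus everything reduces to the single inclusion
\[
\Delta(\KK[X]) \subseteq \KK[X] \otimes \KK[X] \subseteq \KK[G] \otimes \KK[G].
\]

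The central observation is that the group-embedding hypothesis makes $\KK[X]$ a sub-bimodule. By assumption the left and right multiplication actions of $G$ on itself extend to actions on $X$ by automorphisms of the variety, hence to actions on $\KK[X]$ by algebra automorphisms; equivalently, $\KK[X] \subseteq \KK[G]$ is stable under both left and right translations. Fix $f \in \KK[X]$. Because the right regular representation of an algebraic group on the coordinate ring of an affine variety is rational (locally finite), the span $R$ of the right translates $R_h f$, $(R_h f)(g) = f(gh)$, is finite dimensional, and $R \subseteq \KK[X]$ by stability. Writing $(\Delta f)(g, h) = f(gh) = (R_h f)(g)$ and expanding $R_h f$ in a basis of $R$ with regular coefficient functions in $h$ shows $\Delta f \in R \otimes \KK[G] \subseteq \KK[X] \otimes \KK[G]$. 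Applying the same argument to the second tensor factor with left translates gives $\Delta f \in \KK[G] \otimes \KK[X]$. It then remains to invoke the elementary identity $(\KK[X] \otimes \KK[G]) \cap (\KK[G] \otimes \KK[X]) = \KK[X] \otimes \KK[X]$ inside $\KK[G] \otimes \KK[G]$, which follows at once by completing a basis of $\KK[X]$ to a basis of $\KK[G]$. Hence $\mu^* := \Delta|_{\KK[X]}$ has image in $\KK[X] \otimes \KK[X] = \KK[X \times X]$ and defines the desired morphism $\mu$, which by construction restricts to the group law on the dense subset $G \times G$.

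Associativity and the unit are then formal: coassociativity of $\Delta$ restricts to coassociativity of $\mu^*$, so $\mu$ is associative; the counit of $\KK[G]$ is evaluation at the identity $e \in G \subseteq X$, and its restriction to $\KK[X]$ witnesses that $e$ is a two-sided unit. Finally, to identify the unit group, note that $\mu|_{G \times G}$ is the original group law, so $G$ is a subgroup of $G(X)$; since $G$ is open in $X$ it is open, hence closed, in $G(X)$, and as $G(X)$ is connected (being the unit group of a monoid, as recalled above) we conclude $G(X) = G$. The main obstacle is the sub-bimodule step: one must use \emph{both} one-sided extensions — precisely the content of the group-embedding hypothesis — since stability under right translations alone would yield only $\Delta f \in \KK[X] \otimes \KK[G]$, which is far weaker than membership in $\KK[X] \otimes \KK[X]$. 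The tensor-intersection identity is exactly what converts the two one-sided statements into the required two-sided one.
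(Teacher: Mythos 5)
Your proof is correct and follows essentially the same route the paper takes in Construction~\ref{monoid_constr} (for the Proposition itself the paper only cites Vinberg and Rittatore): the heart of both arguments is that the two one-sided extensions of the translation actions force $\Delta(\KK[X])$ into $(\KK[X]\otimes\KK[G])\cap(\KK[G]\otimes\KK[X])=\KK[X]\otimes\KK[X]$. The only cosmetic difference is that you establish the one-sided containments via local finiteness of the translates of $f$, whereas the paper reads them off directly from the comorphisms of the extended actions $G\times X\to X$ and $X\times G\to X$.
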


A \emph{morphism} of algebraic monoids $X_1$ and $X_2$ is a morphism $\tau\colon X_1 \to X_2$ of algebraic varieties such that $\tau(x * y) = \tau(x) * \tau(y)$ for any $x,y \in X_1$ and $\tau(e_1) = e_2$ for units $e_1 \in X_1$, $e_2 \in X_2$. 
A \emph{morphism} of group embeddings $X_1$ and $X_2$ of a group $G$ is a morphism $\tau\colon X_1 \to X_2$ of algebraic varieties commuting with $G$-actions on $X_1$ and~$X_2$. 
It is easy to see that these notions are equivalent. 
A morphism~$\tau$ is an \emph{isomorphism} if it is invertible and $\tau^{-1}$ is a morphism as well. 

\medskip

An algebraic monoid $X$ is said to be \emph{commutative} if the multiplication $\mu\colon X \times X \to X$ is commutative, or, equivalently, the group $G(X)$ is commutative. 
It is known \cite[Theorem 15.5]{Hum} that any connected commutative affine group~$G$ over an algebraically closed field~$\KK$ of characteristic zero splits into the product of a torus and a commutative unipotent group, that is $G \cong \GG_m^r \times \GG_a^s$ for some $r, s \in \Zgezero$, where $\GG_m = (\KK^\times, \times)$, $\GG_a = (\KK, +)$ are the multiplicative and additive groups of the ground field. By the \emph{rank} and the \emph{corank} of a commutative monoid $X$ we mean the numbers $r$ and~$s$ for the group of invertible elements $G(X)$, respectively. 

In particular, in the case of corank~$0$ we have group embeddings of the torus $G = \GG_m^n$, i.e. $X$ is an affine toric variety. The monoid structure on~$X$ is unique up to isomorphism since the toric structure on~$X$ is unique, see~\cite{De1982,Gu1998,Be2003}. We call it the \emph{toric monoid structure}. Another remarkable case is the \emph{vector monoid structure} of rank~$0$ on an affine space~$\AA^n$, where the multiplication is the vector addition. It does not depend on the choice of zero point up to isomorphism. 

\smallskip

In the commutative case, there is a natural bijection between group embeddings of~$G$ and effective $G$-actions with an open orbit and a fixed base point in the open orbit. Indeed, any group embedding~$\iota\colon G \hookrightarrow X$ gives a $G$-action on~$X$ with a base point $\iota(e)$ in the open orbit~$\iota(G)$, and for any $G$-action on~$X$ with a base point $x_0$ the orbit map $G \to Gx_0 \subseteq X$, $g \mapsto g \cdot x_0$, is a group embedding of~$G$. 

\medskip

For calculations in Sections~\ref{comult_sec} and~\ref{Coxlang_sec}, we need an explicit description of the correspondence between monoid structures and actions with an open orbit for a commutative group~$G$. It is given in Construction~\ref{monoid_constr} below. 

\begin{construction}
\label{monoid_constr}
Let an affine commutative group~$G$ act effectively on an affine variety~$X$ over an algebraically closed field~$\KK$ of characteristic zero. Let the action have an open orbit and let $x_0$ be chosen as the base point in the open orbit. We are going to construct the corresponding commutative monoid structure on $X$ with the unit~$x_0$. 

Consider the orbit map $\iota\colon G \to Gx_0 \subseteq X$, $g \mapsto g \cdot x_0$. Identifying $G$ with the image $\iota(G) \subseteq X$, we see that the morphism of action $G \times X \to X$ extends the group multiplication $G \times G \to G$. Since the multiplication in $G$ is commutative, the symmetric map $X \times G \to X$ extends the same multiplication $G \times G \to G$. 

Let the homomorphism of algebras $\Phi\colon\KK[G] \to \KK[G] \otimes \KK[G]$ be dual to the multiplication morphism $G \times G \to G$. 
The dual homomorphism to the morphism $\iota\colon G \hookrightarrow X$ gives the inclusion $\KK[X] \hookrightarrow \KK[G]$, so we identify $\KK[X]$ with the subspace of~$\KK[G]$. By the above, the homomorphism~$\Phi$ restricts to the dual homomorphisms $\KK[X] \to \KK[G] \otimes \KK[X]$ and $\KK[X] \to \KK[X] \otimes \KK[G]$. Then the image~$\Phi(\KK[X])$ belongs to
\[
\KK[X] \otimes \KK[X] = (\KK[G] \otimes \KK[X]) \cap (\KK[X] \otimes \KK[G]).
\]
So we obtain the homomorphism $\Phi\colon\KK[X] \to \KK[X] \otimes \KK[X]$ whose dual defines the morphism $\mu\colon X \times X \to X$. The obtained multiplication~$\mu$ is associative, commutative, and admits the unit $x_0$ since the same holds for the multiplication on~$G$. 
\end{construction}

\begin{remark}
The monoid structures on~$X$ corresponding to the same action $G \times X \to X$ and different points $x_0$ in the open orbit are isomorphic since the action of $G$ on the open orbit is transitive. 
\end{remark}

Let us mention that for any irreducible algebraic monoid there exists a unique algebraic monoid structure on its normalization such that the normalization map is a morphism of algebraic monoids, see~\cite[Proposition~3.15]{Re}. This motivates us to focus on monoid structures on normal varieties. 

%%%%%%%%%%%%%%%%%%%%%%%%%%%%%%%%%%%%%%%%%%%%%%
\section{The comultiplication language}
\label{comult_sec}

Let $X$ be an affine algebraic variety. We are going to describe multiplications $X \times X \to X$ in terms of their comorphisms $\Phi\colon \KK[X] \to \KK[X] \otimes \KK[X]$, which we call \emph{comultiplications}.  

\medskip

Let $G$ be an affine algebraic group. We need the following lemma. 
\begin{lemma}
\label{techcomult_lem}
Let $\phi\colon G \times \KK[X] \to \KK[X]$ be the action on the algebra $\KK[X]$ corresponding to the action of~$G$ on~$X$. Then the map $\Phi\colon \KK[X] \to \KK[G] \otimes \KK[X]$ dual to the action $G \times X \to X$ maps $f \in \KK[X]$ to $\phi(g^{-1}, f)$ considered as a function in the argument~$g \in G$. 
\end{lemma}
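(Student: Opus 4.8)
The plan is to unwind both sides of the asserted equality to honest functions and to check that they agree pointwise, using that regular functions on an affine variety over an algebraically closed field separate points. First I would fix the conventions. Write $a\colon G \times X \to X$, $a(g,x)=g\cdot x$, for the action, so that by definition $\Phi$ is its comorphism and, under the standard identification $\KK[G\times X] = \KK[G]\otimes\KK[X]$, one has $\Phi(f) = f\circ a$, i.e. $\Phi(f)(g,x) = f(g\cdot x)$ for all $g\in G$, $x\in X$. For a possibly noncommutative $G$ the induced \emph{left} action on the algebra is forced to be the contragredient one, $\phi(g,f)(x)=f(g^{-1}\cdot x)$; this is the $\phi$ in the statement, and the presence of $g^{-1}$ there is exactly the signal of this convention.

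Next I would make precise the phrase ``considered as a function in the argument $g$''. An element $F=\sum_i h_i\otimes f_i\in\KK[G]\otimes\KK[X]$ determines, for each $g\in G$, the element $(\mathrm{ev}_g\otimes\mathrm{id})(F)=\sum_i h_i(g)\,f_i\in\KK[X]$, and this assignment is a regular $\KK[X]$-valued function of $g$ taking values in the finite-dimensional span of the $f_i$; conversely this data recovers $F$. Thus the lemma amounts to the identity of $\KK[X]$-valued functions $g\mapsto(\mathrm{ev}_g\otimes\mathrm{id})\Phi(f)$ and $g\mapsto\phi(g^{-1},f)$ on $G$.

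The computation is then immediate and constitutes the crux. Evaluating both candidate functions at an arbitrary point $x\in X$ gives
\[
\big((\mathrm{ev}_g\otimes\mathrm{id})\Phi(f)\big)(x)=\Phi(f)(g,x)=f(g\cdot x)
\]
and
\[
\phi(g^{-1},f)(x)=f\big((g^{-1})^{-1}\cdot x\big)=f(g\cdot x).
\]
These agree for every $x$, so the two elements of $\KK[X]$ coincide (functions separating points), and since this holds for every $g$ the two $\KK[X]$-valued functions on $G$ are equal, which is the claim. The only point needing care is the bookkeeping of inverses: the explicit $g^{-1}$ in the statement cancels the $g^{-1}$ built into the contragredient convention for $\phi$, so that $\Phi(f)$ evaluated at $g$ returns $f(g\cdot -)$ rather than $f(g^{-1}\cdot -)$. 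I expect no genuine obstacle beyond stating this convention consistently and recording the identification of $\KK[G]\otimes\KK[X]$ with regular $\KK[X]$-valued functions on $G$.
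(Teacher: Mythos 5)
Your proof is correct and is essentially the paper's own argument: the paper likewise notes that under the contragredient convention the action map sends $(g,x)$ to the point given by the evaluation $f\mapsto\phi(g^{-1},f)(x)$ and then dualizes. You merely spell out the identifications ($\KK[G]\otimes\KK[X]$ as regular $\KK[X]$-valued functions on $G$, pointwise evaluation) in more detail; no difference in substance.
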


\begin{proof}
If $\phi\colon G \times \KK[X] \to \KK[X]$, $(g, f) \mapsto \phi(g, f)$, is the action on~$\KK[X]$, then the action $G \times X \to X$ maps a pair $(g, x)$ to a point $\phi(g^{-1}, f)(x)$ considered as an evaluation homomorphism $\KK[X] \to \KK$ in the argument~$f$. Then its dual is as claimed. 
\end{proof}

\begin{theorem}
\label{comult_theor}
Let $X$ be a normal affine algebraic variety of dimension~$n$ with commutative monoid structure of rank $r = 0,\, n-1$ or~$n$. Then $X$ is toric, so $\KK[X] = \!\bigoplus\limits_{u \in S_X} \!\KK\chi^u$, and we have the following classification of monoid structures up to isomorphism. 

1) If $r = n$, then $X$ is endowed with the toric monoid structure and the comultiplication ${\KK[X] \to \KK[X] \otimes \KK[X]}$ is given by
\[\chi^u \mapsto \chi^u \otimes \chi^u \;\text{ for any } u \in S_X.\]

2) If $r = n-1$, then the comultiplication $\KK[X] \to \KK[X] \otimes \KK[X]$ is defined by
\[\chi^u \mapsto \chi^u \otimes \chi^u (1 \otimes \chi^e + \chi^e \otimes 1)^{\langle p_i, u\rangle} \;\text{ for any } u \in S_X,\] 
where $p_i$ is the primitive vector on a ray of the fan of $X$ and $e \in \mathfrak{R}_i$ is a Demazure root corresponding to this ray.

3) If $r = 0$, then $X = \AA^n$ and $X$ has the vector monoid structure with the comultiplication
\[\chi^u \mapsto \chi^u \otimes 1 + 1 \otimes \chi^u \;\text{ for any } u \in \Zgezero^n.\]
\end{theorem}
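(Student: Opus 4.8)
The plan is to exploit the bijection of Construction~\ref{monoid_constr} between commutative monoid structures on $X$ with $G(X)=G$ and effective $G$-actions with a dense orbit, and then to play off the abstract structure $G\cong\GG_m^r\times\GG_a^s$ against the geometry of $X$. Since $G=G(X)$ is open and dense in the irreducible variety $X$, we have $\dim G=n$, so $r+s=n$ and the three cases correspond to $s=0,1,n$. First I would dispose of the two extreme cases. If $r=n$ then $G=\GG_m^n$ is a torus, so $X$ is an affine torus embedding, i.e. toric; the comultiplication of the torus sends a character to $\chi^u\mapsto\chi^u\otimes\chi^u$ because characters are group homomorphisms, giving (1). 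If $r=0$ then $G=\GG_a^n$ is unipotent; since orbits of unipotent groups on affine varieties are closed, the open orbit is closed, hence equals $X$, so $X\cong\GG_a^n\cong\AA^n$ with vector addition, and on the coordinate functions $\chi^{e_i}$ the comultiplication reads $\chi^{e_i}\mapsto\chi^{e_i}\otimes1+1\otimes\chi^{e_i}$, giving (3).

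The heart of the matter is $r=n-1$, where $G=T_1\times\GG_a$ with $T_1=\GG_m^{n-1}$ and the $\GG_a$-factor giving an LND $\pa$. Using Lemma~\ref{techcomult_lem} and Construction~\ref{monoid_constr} I identify $\KK[X]$ with a subalgebra of $\KK[G]=\KK[T_1]\otimes\KK[z]$ via the open orbit. The $T_1$-action produces an $M_1:=\ZZ^{n-1}$-grading $\KK[X]=\bigoplus_w\KK[X]_w$ with $\KK[X]_w\subseteq\chi^w\KK[z]$, and since $G$ is a \emph{direct} product the torus $T_1$ commutes with $\GG_a$, so each $\KK[X]_w$ is stable under the translation $\GG_a$-action, i.e. under $\pa_z$. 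The only $\pa_z$-stable subspaces of $\chi^w\KK[z]$ are $\chi^w\KK[z]_{\le d_w}$, and these are automatically homogeneous in the $z$-degree. Hence $\KK[X]$ is graded by $M:=M_1\oplus\ZZ$ and spanned by monomials $\chi^w z^k$, so the torus $T=\GG_m^n$ acts on $X$ with a dense orbit: $X$ is toric, with $\KK[X]=\bigoplus_{u\in S_X}\KK\chi^u$. Manufacturing this extra torus direction from the weight-space/$\GG_a$-stability analysis is, I expect, the \textbf{main obstacle}.

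Next I would read off the comultiplication and match it to a Demazure root. The extra $\GG_m\subseteq T$ rescales $z$ and normalizes $\GG_a$, so $\pa$ is $T$-homogeneous; by the correspondence of~\cite{Li} its degree is a Demazure root $e\in\mathfrak{R}_i$, where $\rho_i$ is the ray carrying the added boundary divisor, and one computes $\pa=\pa_e$ with $\pa_e(\chi^u)=\langle p_i,u\rangle\chi^{u+e}$. The correct additive coordinate on the $\GG_a$-factor is $s=\chi^{-e}$: indeed $\pa_e(\chi^{-e})=1$ and $\pa_e(1)=0$, so the root subgroup acts by $\chi^{-e}\mapsto\chi^{-e}+c$, while on the open orbit $\chi^u=\psi_u\,s^{\langle p_i,u\rangle}$ with $\psi_u$ a $T_1$-character, since $\langle p_i,u\rangle$ is the vanishing order of $\chi^u$ along $\{s=0\}$. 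Applying Construction~\ref{monoid_constr}, the monoid product restricts to the group law $(t,s)*(t',s')=(tt',s+s')$; evaluating and using $\chi^e=s^{-1}$ on the orbit gives, on the dense open orbit,
\[
\mu^*(\chi^u)=\psi_u\psi_u\,(s+s')^{\langle p_i,u\rangle}
=\chi^u\otimes\chi^u\,(1\otimes\chi^e+\chi^e\otimes1)^{\langle p_i,u\rangle},
\]
which is the formula of (2).

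Finally I would check that this expression is a genuine comultiplication on $X$. It is an algebra homomorphism because $\langle p_i,\cdot\rangle$ is linear, so the factors $\chi^u\otimes\chi^u$ and $(1\otimes\chi^e+\chi^e\otimes1)^{\langle p_i,u\rangle}$ multiply correctly; it is co-commutative by the evident symmetry. The one genuine verification is that the image lies in $\KK[X]\otimes\KK[X]$: expanding the binomial gives the terms $\chi^{u+je}\otimes\chi^{u+(\langle p_i,u\rangle-j)e}$ for $0\le j\le\langle p_i,u\rangle$, and $u+je\in S_X$ holds because $\langle p_i,u+je\rangle=\langle p_i,u\rangle-j\ge0$ while $\langle p_j,u+je\rangle\ge0$ for $j\ne i$ follows from $u\in\omega$ together with $\langle p_j,e\rangle\ge0$ — precisely the defining inequalities of $\mathfrak{R}_i$. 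Thus the only subtle points are the toricity step and the choice $s=\chi^{-e}$ that turns the direct-product group law into the displayed Demazure-root comultiplication; the remaining checks are routine.
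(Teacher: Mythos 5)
Your proposal is correct and arrives at the paper's formulas, but in the crucial case $r=n-1$ it takes a genuinely different route. The paper quotes \cite[Theorem~2]{AK} for the structural fact that every group embedding of $\GG_m^{n-1}\times\GG_a$ is a toric variety on which $G=\Ker\chi^e\times\GG_a$ acts for a Demazure root $e$, and then computes the comultiplication by expanding $\exp(\alpha\pa_e)(\chi^u)$ and evaluating at a base point via Lemma~\ref{techcomult_lem}. You instead re-derive that structural fact from scratch: embed $\KK[X]$ into $\KK[G]=\KK[T_1][z]$, decompose into $T_1$-weight spaces $\KK[X]_w\subseteq\chi^w\KK[z]$, and use that a $\tfrac{d}{dz}$-stable subspace of $\KK[z]$ is necessarily some $\KK[z]_{\le d}$ (in characteristic zero), so that $\KK[X]$ is a monomial subalgebra, the enlarged torus $T_1\times\GG_m$ acts effectively with the open subset $\{z\ne 0\}\subseteq G$ as a dense orbit, and the $\GG_a$-action is a homogeneous LND whose degree is a Demazure root by~\cite{Li}. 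This argument is sound — it is essentially a self-contained specialization of the proof of the quoted theorem — and your computation of the comultiplication via the coordinate $s=\chi^{-e}$ on the open orbit is a clean substitute for the paper's binomial expansion; what the citation buys the paper is brevity. Cases 1) and 3) coincide with the paper's argument. Two small points you should still supply: in case 1), the phrase ``up to isomorphism'' requires the uniqueness of the toric structure on $X$, which the paper covers by citing Demushkin, Gubeladze and Berchtold; and in case 2), the existence direction — that every Demazure root does produce a monoid structure of rank $n-1$, not merely that every such monoid produces a Demazure root — needs a sentence (co-associativity and the counit at the point $x_0$ with $\chi^u(x_0)=0$ for $\langle p_i,u\rangle>0$ follow at once from your formula, or one can cite \cite[Proposition~6]{AK} as the paper does).
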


\begin{remark}
For a Demazure root $e \in \mathfrak{R}_i$, we have $\langle p_i, e\rangle = -1$, whence $e \notin S_X$ and a function $\chi^e \in \KK(X)$ does not belong to $\KK[X]$. However, $\chi^{u + je} \in \KK[X]$ for any $0 \le j \le \langle p_i, u\rangle$, so the map in 2) is well defined. 
\end{remark}

\begin{proof}
1) We preserve the notation of previous sections. 
In the case $r=n$, the monoid structure is given by a group embedding of the torus $G = \GG_m^n$, i.e. the variety $X$ is toric. Since the toric structure on~$X$ is unique, see~\cite{De1982,Gu1998,Be2003}, the monoid structure on~$X$ is unique as well. According to Construction~\ref{monoid_constr}, the toric multiplication extends the torus action $T \times X \to X$ with an open orbit. The corresponding action $\phi\colon T \times \KK[X] \to \KK[X]$ is given by $(t, \chi^u) \mapsto t^u \chi^u$ for any $u \in M$. By Lemma~\ref{techcomult_lem}, the map $\Phi\colon \KK[X] \to \KK[T] \otimes \KK[X]$ is given by $\chi^u \mapsto t^{-u} \chi^u$. According to Construction~\ref{monoid_constr}, we identify $T$ with the open subset of $X$ by acting at a point $x_0$ in the open orbit. Take $x_0 \in X$ such that $\chi^u(x_0) = 1$ for any $u \in M$. We obtain that $\KK[X] \hookrightarrow \KK[T]$ is given by $\chi^u \mapsto t^{-u} \chi^u(x_0) = t^{-u}$, so the comorphism $\Phi$ above restricts to the comultiplication $\Phi\colon\KK[X] \to \KK[X] \otimes \KK[X]$, $\chi^u \mapsto \chi^u \otimes \chi^u$. 

\smallskip

2) In case of corank~$1$, we have a group embedding of $G = \GG_m^{n-1} \times \GG_a$. First let us construct some structures on an affine toric variety~$X$ with acting torus~$T$. 

Let $e \in \mathfrak{R}_i$ be a Demazure root. Since the one-parameter group $\GG_a$ corresponding to the LND $\pa_e$ is normalized by the torus $T$, the group $T \rightthreetimes \GG_a$ acts on $X$. Then the group $G = \Ker \chi^e \times \GG_a$ has corank~$1$, $G$ acts on $X$ with an open orbit, and $X$ is a group embedding of~$G$, see~\cite[Proposition 6]{AK}. 

By~\cite[Theorem 2]{AK}, all group embeddings of the group~$G = \GG_m^{n-1} \times \GG_a$ can be realized this way, i.e. $X$ is toric and $G = \Ker \chi^e \times \GG_a$. Since the LND $\pa_e$ maps $\chi^u \mapsto \langle p_i, u\rangle \chi^{u+e}$, we get the formula for the action $\phi\colon G \times \KK[X] \to \KK[X]$: an element $(t, \alpha) \in G$ acts on $\chi^u \in \KK[X]$ by $t\exp \alpha\pa_e$, and the result equals
\begin{equation}
\label{rk1act_eq}
t^u \left(\chi^u + \alpha \langle p_i, u\rangle\chi^{u+e} + 
\frac{\alpha^2}{2} \langle p_i, u\rangle \langle p_i, u + e\rangle \chi^{u+2e} + \ldots \right)
= 
t^u \sum_{j = 0}^{\langle p_i, u\rangle} \binom{\langle p_i, u\rangle}{j} \alpha^j \chi^{u + je},
\end{equation}
since $\langle p_i, e\rangle = -1$. By Lemma~\ref{techcomult_lem}, the map $\Phi\colon \KK[X] \to \KK[G] \otimes \KK[X]$ is given by
\[
\Phi(\chi^u) = t^{-u} \sum_{j = 0}^{\langle p_i, u\rangle} \binom{\langle p_i, u\rangle}{j} (-\alpha)^j \chi^{u + je}.
\]
According to Construction~\ref{monoid_constr}, consider an embedding $G \hookrightarrow X$ given by a point $x_0$ in the open orbit. Take $x_0$ with
\[\chi^u(x_0) = 
\begin{cases}
1 \; \text{ if } \; \langle p_i, u\rangle = 0 \\
0 \; \text{ if } \; \langle p_i, u\rangle > 0 
\end{cases}.\] 
The evaluation of~\eqref{rk1act_eq} at $x_0$ yields that the dual embedding $\KK[X] \hookrightarrow \KK[G]$ is given by $(t, \alpha) \mapsto t^{-u} (-\alpha)^{\langle p_i, u\rangle}$, $(t, \alpha) \in G$. In particular, for $0 \le j \le \langle p_i, u\rangle$ the function $\chi^{u + ke}$ is identified with $t^{-u}(-\alpha)^{\langle p_i, u + je\rangle} = t^{-u}(-\alpha)^{\langle p_i, u\rangle - j}$. Thus, the formula for the comultiplication $\Phi\colon \KK[X] \to \KK[X] \otimes \KK[X]$ is
\[
\Phi(\chi^u) = \sum_{j = 0}^{\langle p_i, u\rangle} \binom{\langle p_i, u\rangle}{j} \chi^{u + (\langle p_i, u\rangle - j)e} \otimes \chi^{u + je} = \chi^u \otimes \chi^u (1 \otimes \chi^e + \chi^e \otimes 1)^{\langle p_i, u\rangle},
\]
as required.

\smallskip

3) Any monoid structure of rank~$0$ is given by an effective action of the group~$\GG_a^n$ with an open orbit. Since any orbit of a unipotent group on an affine variety~$X$ is closed~\cite[Section 1.3]{PV}, an open orbit of $\GG_a^n$ has to coincide with the monoid $X$. By Construction~\ref{monoid_constr}, the multiplication on~$X$ equals the operation in $\GG_a^n$. Thus $X \cong \AA^n$ is endowed with the vector monoid structure, the multiplication is given by the formula $(x, y) \mapsto x + y$, and the dual is $\chi^u \mapsto \chi^u \otimes 1 + 1 \otimes \chi^u$.
\end{proof}

Let us formulate when the above monoid structures are isomorphic. Since the isomorphism of monoids respects the group of invertible elements, monoids of different ranks are not isomorphic. Proposition~10 of~\cite{AK} claims that group embeddings of $\GG_m^{n-1} \times \GG_a$ given by two Demazure roots $e_1, e_2$ of a toric variety are equivalent if and only if there is an automorphism of the lattice $N$ which preserves the fan and such that the induced automorphism of the dual lattice $M$ sends $e_2$ to $e_1$. Thus we have the following corollary of~\cite[Proposition~10]{AK}. 

\begin{proposition}
\label{isomorph_dimn_prop}
In notation of Theorem~\ref{comult_theor}, monoids or rank~$n-1$ corresponding to Demazure roots $e_1, e_2 \in \mathfrak{R}$ are isomorphic if and only if there is an automorphism of the lattice~$N$ which preserves the fan of~$X$ and such that the induced automorphism of the dual lattice~$M$ sends $e_2$ to~$e_1$.
\end{proposition}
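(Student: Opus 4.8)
The plan is to deduce the statement from \cite[Proposition~10]{AK} by translating the isomorphism of monoid structures into the language of group embeddings. Throughout, $X$ is the fixed toric variety whose fan lies in~$N$, and for $i = 1, 2$ the monoid structure attached to the root $e_i \in \mathfrak{R}$ is, by the proof of Theorem~\ref{comult_theor}(2), the one produced by Construction~\ref{monoid_constr} from the group embedding $\Ker\chi^{e_i} \times \GG_a \hookrightarrow X$, where the $\GG_a$-factor is generated by the locally nilpotent derivation~$\pa_{e_i}$. Denote these two structures by $\mu_1$ and~$\mu_2$.

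First I would record the bridge between the two kinds of morphism. An isomorphism of monoids $\tau\colon (X, \mu_1) \to (X, \mu_2)$ sends the unit to the unit and invertible elements to invertible elements, hence restricts to an isomorphism $G(X, \mu_1) \cong G(X, \mu_2)$ of the groups of invertible elements, both of which are isomorphic to $G = \GG_m^{n-1} \times \GG_a$. Since, by Construction~\ref{monoid_constr}, each multiplication $\mu_i$ restricts on $G \times X$ to the defining $G$-action, the map $\tau$ intertwines the two actions through the group automorphism $\theta = \tau|_{G}$; conversely, any such equivariant isomorphism of group embeddings extends uniquely to an isomorphism of the monoid multiplications. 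This is precisely the equivalence of the notions ``morphism of monoids'' and ``morphism of group embeddings'' noted in Section~\ref{monoid_sec}. Consequently, $(X, \mu_1)$ and $(X, \mu_2)$ are isomorphic as monoids if and only if the corresponding group embeddings of~$G$ are equivalent.

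It then remains to invoke \cite[Proposition~10]{AK}, which characterizes the equivalence of these two group embeddings by the existence of an automorphism of~$N$ preserving the fan of~$X$ whose dual automorphism of~$M$ sends $e_2$ to~$e_1$; this is exactly the asserted condition.

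The step requiring care, which I expect to be the main obstacle, is the matching of the two equivalence relations. A monoid isomorphism need not fix~$G$ pointwise: it may act on $G = \GG_m^{n-1} \times \GG_a$ through a nontrivial group automorphism~$\theta$, and \emph{a priori} it need not respect the ambient torus $T = \GG_m^n$, only the corank-one subgroup~$G$. One must therefore verify that the equivalence governing \cite[Proposition~10]{AK} is the one allowing a simultaneous automorphism of~$G$, and that every such equivalence is induced by a fan automorphism of~$N$, so that $\theta$ is the restriction of a toric automorphism of~$X$. Granting \cite[Proposition~10]{AK}, this reconciliation is bookkeeping, but it is the only place where the rank-$(n-1)$ geometry enters nontrivially.
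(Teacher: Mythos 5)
Your proposal is correct and follows essentially the same route as the paper: the paper likewise identifies the rank-$(n-1)$ monoid structures with group embeddings of $\GG_m^{n-1}\times\GG_a$ (using the equivalence of morphisms of monoids and of group embeddings noted in Section~\ref{monoid_sec}) and then states the proposition as a direct corollary of \cite[Proposition~10]{AK}. The only difference is that you make explicit the bookkeeping about which equivalence relation on embeddings is being matched, a point the paper passes over silently.
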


\begin{example}
\label{dim3_multexample}
Let us list all monoid structures of rank~2 corresponding to $p_1 = (1,0,0)$ for the quadratic cone $X = \{ab = cd\} \subseteq \AA^4$, see Example~\ref{dim3_1example}. According to Theorem~\ref{comult_theor}, the comultiplication is given by $\chi^u \mapsto \chi^u \otimes \chi^u (1 \otimes \chi^e + \chi^e \otimes 1)^{u_1}$. Recall that $a=\chi^{(1,0,0)}$, $b=\chi^{(0,1,0)}$, $c=\chi^{(0,0,1)}$ and $d=\chi^{(1,1,-1)}$, thus for a Demazure root $e = (-1, l_2, l_3) \in \mathfrak{R}_1$ we have $\chi^e(x) = a(x)^{-1} b(x)^{l_2} c(x)^{l_3}$. Then the coordinates $a, b, c, d$ of the product $x*y$ are given by
\begin{gather*}
a(x*y) = a(x) b(y)^{l_2} c(y)^{l_3} + b(x)^{l_2} c(x)^{l_3} a(y),\\
b(x*y) = b(x) b(y), \quad
c(x*y) = c(x) c(y),\\
d(x*y) = d(x) b(y)^{l_2+1} c(y)^{l_3-1} + b(x)^{l_2+1} c(x)^{l_3-1} d(y).
\end{gather*}
Since any ray of the cone can be mapped to any other ray by an automorphism of the lattice~$N$ preserving~$\sigma$, the above monoid structures are all the monoid structures of rank~$1$ on~$X$ by Proposition~\ref{isomorph_dimn_prop}. 
\end{example}

\begin{example}
Let $X$ be an affine cone over the singular del Pezzo surface $Y$ of type $D_5$ of degree~$4$, i.e. 
\[X = \{x_0x_1 - x_2^2 = x_0x_4 - x_1x_2 + x_3^2 = 0\} \subseteq \AA^5.\]
According to \cite[Lemma~6]{DL}, $Y$ admits an action of the additive group $\GG_a^2$ with an open orbit, whence $X$ admits an action of the group $\GG_m \times \GG_a^2$ with an open orbit. Then $X$ has a commutative monoid structure of rank~$1$. It is known that $Y$ is a non-toric variety and the Cox ring $\RRR(Y)$ is not a polynomial ring, see~\cite[Theorem~5.4.4.2, no.~7]{ADHL}. Let us notice that $\RRR(Y) = \RRR(X)$. Indeed, \cite[Construction~4.2.1.2 and Example 4.2.1.6]{ADHL} imply that the quotient presentation $\widehat Y \xrightarrow{/\!/ H_Y} Y$ for characteristic space~$\widehat Y$ factors through the quotient presentation $X \xrightarrow{/\!/ \KK^\times} Y$, and the map $\widehat Y \to X$ is a quotient presentation, which is a characteristic space for~$X$ by~\cite[Theorem~1.6.4.3]{ADHL}. Then $\RRR(X) = \RRR(Y)$ is the algebra of regular functions on $\widehat Y$ by~\cite[Construction~1.6.1.3]{ADHL}. Thus $\RRR(X)$ is not a polynomial ring as well, whence $X$ is non-toric. This shows that in dimension~$3$ the variety admitting a commutative monoid structure need not be toric. 
\end{example}

%%%%%%%%%%%%%%%%%%%%%%%%%%%%%%%%%%%%%%%%%%%%%%
\section{The Cox construction}
\label{Cox_sec}

We preserve the notation of Sections~\ref{toric_sec} and~\ref{monoid_sec} and describe the Cox construction for affine toric varieties. See~\cite{Cox},~\cite[Chapter 5]{CLS} for arbitrary toric varieties and~\cite{ADHL} for general varieties. 

\begin{construction}
\label{Cox1_constr}
Let $X$ be an affine toric variety with the acting torus~$T$. First let us assume that $\sigma(1)$ spans $N_\QQ$. This means that $X$ has no torus factors, or $\KK[X]$ has no non-constant invertible functions. 

Each ray $\rho_i \in \sigma(1)$, $1 \le i \le m$, corresponds to an irreducible $T$-invariant Weil divisor~$D_i$ on~$X$. The free abelian group $\barM$ of $T$-invariant Weil divisors on~$X$ is freely generated by divisors~$D_i$, $1 \le i \le m$, i.e. a divisor $D \in \barM \simeq \ZZ^m$ is uniquely represented as a sum $\sum_{i=1}^m a_iD_i$, $a_i \in \ZZ$. Each rational function $\chi^u \in \KK(X)$, $u \in M$, gives the principal divisor $\Div(\chi^u) = \sum \langle p_i, u\rangle D_i$, see~\cite[Section~3.3]{Fu}. 

Denote by~$\Cl(X)$ the class group of~$X$. By \cite[Section~3.4]{Fu}, there is an exact sequence
\begin{equation}
\label{Coxexact_eq}
0 \to M \to \barM \to \Cl(X) \to 0,
\end{equation}
where $M \to \barM \simeq \ZZ^m$ is given by $u \mapsto \Div(\chi^u) \simeq \bar u := (\langle p_1, u\rangle, \ldots, \langle p_m, u\rangle)$ and $\barM \to \Cl(X)$ maps a divisor~$D$ to the class $[D]$ in the class group. 

The \emph{Cox ring} of a toric variety $X$ is a polynomial ring~$\RRR(X) = \KK[x_1, \ldots, x_m]$ graded by the group~$\Cl(X)$ via $\deg x_i = [D_i]$ and the \emph{total coordinate space} is the affine space $\barX := \Spec \RRR(X) = \AA^m$. 

Let a quasitorus $H_X := \Hom(\Cl(X), \KK^\times)$ and a torus $\barT := \Hom(\barM, \KK^\times) \simeq (\KK^\times)^m$ be the character groups of the abelian group $\Cl(X)$ and the lattice $\barM$, respectively. The application of $\Hom(\,\cdot\,, \KK^\times)$ to~\eqref{Coxexact_eq} gives the dual exact sequence
\begin{equation}
\label{Coxexact2_eq}
1 \leftarrow T \xleftarrow{\,\pi'} \barT \leftarrow H_X \leftarrow 1.
\end{equation}
Since~$M \to \barM$ is given by $u \mapsto \bar u$, the dual map $(\pi')^*\colon \KK[T] \to \KK[\barT]$ is given by $\chi^u \mapsto \chi^{\bar u}$, $u \in M$. 

The diagonal action of $\barT \simeq (\KK^\times)^m$ on the total coordinate space~$\barX = \AA^m$ and the inclusion $H_X \hookrightarrow \barT$ in~\eqref{Coxexact2_eq} define an action of $H_X$ on $\AA^m$. This action corresponds to the $\Cl(X)$-grading on~$\RRR(X)$. 

Let us fix some group embeddings of tori $T$ and $\barT$ in $X$ and $\barX$, respectively. Then the map $(\pi')^*\colon \KK[T] \to \KK[\barT] = \KK[x_1^{\pm1}, \ldots, x_m^{\pm1}]$ restricts to an isomorphism between $\KK[X]$ and the algebra of invariants $\KK[x_1, \ldots, x_m]^{H_X} \subseteq \KK[x_1, \ldots, x_m]$ defined by $\chi^u \mapsto \chi^{\bar u}$, $u \in S_X$. This means that $X$ is the categorical quotient of~$\barX = \AA^m$ by the above action of~$H_X$, so there is the canonical \emph{Cox construction} $\pi\colon \barX \xrightarrow{/\!/ H_X} X$, see~\cite[Theorem~2.1, Lemma 2.2]{Cox}. 
\end{construction}

\begin{example}
\label{cone_2example}
Let $X$ be the affine toric surface given by the cone~$\sigma$ with $p_1 = (0, 1)$, $p_2 = (2, -1)$, see Example~\ref{cone_1example}. The group of invariant Weil divisors is freely generated by the corresponding divisors~$D_1, D_2$. The relations between $[D_1], [D_2] \in \Cl(X)$ are given by principal divisors $\Div(\chi^u) = \langle p_1, u\rangle D_1 + \langle p_2, u\rangle D_2$ for basis vectors $u$, so we obtain 
$2[D_1] = 0$ and $[D_1] - [D_2] = 0$, i.e. $\Cl(X) = \ZZ/2\ZZ$. 
The Cox ring is $\KK[x_1, x_2]$ graded by $\Cl(X)$ via $\deg x_1 = [D_1]$, $\deg x_2 = [D_2]$, and the finite group $H_X \cong \ZZ/2\ZZ$ acts on the total coordinate space $\barX = \AA^2$ via $(x_1, x_2) \mapsto (-x_1, -x_2)$. Then the algebra of invariants equals $\KK[x_1, x_2]^{H_X} = \KK[x_1^2, x_1x_2, x_2^2]$, so the categorical quotient $\AA^2 /\!/ H_X$ is the quadratic cone $X= \{ac = b^2\} \subseteq \AA^3$, where the regular functions~
%$\begin{matrix}a = \chi^{(1,0)} \\ b = \chi^{(1,1)} \\ c = \chi^{(1,2)}\end{matrix}$ in $\KK[X]$ correspond to 
%$\begin{matrix}\chi^{\overbar{(1,0)}} = \chi^{(0,2)}=x_2^2\phantom{x_3} \\ \chi^{\overbar{(1,1)}} = \chi^{(1,1)}=x_1x_2 \\ \chi^{\overbar{(1,2)}} = \chi^{(2,0)}=x_1^2\phantom{x_3}\end{matrix}$ in~$\KK[\barX]$, respectively. 
\[a = \chi^{(1,0)}, \; b = \chi^{(1,1)}, \; c = \chi^{(1,2)} \text{ in } \KK[X]\]
correspond to 
\[\chi^{\overbar{(1,0)}} = \chi^{(0,2)}=x_2^2, \; \chi^{\overbar{(1,1)}} = \chi^{(1,1)}=x_1x_2, \; \chi^{\overbar{(1,2)}} = \chi^{(2,0)}=x_1^2\phantom{x_3} \text{ in } \KK[\barX],\]
respectively. 
\end{example}

\begin{example}
\label{dim3_2example}
Let $X$ be the affine toric hypersurface from Example~\ref{dim3_1example} given by the cone~$\sigma$ with $p_1 = (1, 0, 0)$, $p_2 = (0, 1, 0)$, $p_3 = (1, 0, 1)$, $p_4 = (0, 1, 1)$. The group of invariant Weil divisors is freely generated by the corresponding divisors~$D_1, \ldots, D_4$. The relations between $[D_1], \ldots, [D_4]$ in the class group~$\Cl(X)$ are given by $[\Div(\chi^u)] = 0$ for basis vectors $u$, so we obtain 
\[
\begin{pmatrix}
	1 & 0 & 1 & 0 \\ 0 & 1 & 0 & 1 \\ 0 & 0 & 1 & 1
\end{pmatrix}
\begin{pmatrix}
	[D_1] \\ [D_2] \\ [D_3] \\ [D_4]
\end{pmatrix}
= \begin{pmatrix} 0 \\ 0 \\ 0 \end{pmatrix}
\] 
and $\Cl(X)$ is a free abelian group or rank~$1$ generated by $[D_1] = -[D_2] = -[D_3] = [D_4]$. 
The Cox ring is $\KK[x_1, x_2, x_3, x_4]$ graded by $\Cl(X)$ via $\deg x_i = [D_i]$, whence the one-dimensional torus ${H_X = \KK^{\times}}$ acts on $\barX = \AA^4$ via $t \cdot (x_1, x_2, x_3, x_4) = (t x_1, t^{-1} x_2, t^{-1} x_3, tx_4)$. Then $\KK[x_1, x_2, x_3, x_4]^{H_X} = \KK[x_1x_2, x_1x_3, x_2x_4, x_3x_4]$, so the quotient~$\AA^4 /\!/ H_X$ is the quadratic cone $X= \{ab = cd\}$, where 
%$\begin{matrix}a = \chi^{(1,0,0)} \\ b = \chi^{(0,1,0)} \\ c = \chi^{(0,0,1)} \\ d = \chi^{(1,1,-1)}\end{matrix}$ in $\KK[X]$ correspond to 
%$\begin{matrix}
%\chi^{\overbar{(1,0,0)}} = \chi^{(1,0,1,0)}=x_1x_3 \\ 
%\chi^{\overbar{(0,1,0)}} = \chi^{(0,1,0,1)}=x_2x_4 \\ 
%\chi^{\overbar{(0,0,1)}} = \chi^{(0,0,1,1)}=x_3x_4 \\ 
%\chi^{\overbar{(1,1,-1)}} = \chi^{(1,1,0,0)}=x_1x_2\end{matrix}$ in~$\KK[\barX]$, respectively. 
\[a = \chi^{(1,0)}, \; b = \chi^{(1,1)}, \; c = \chi^{(1,2)} \text{ in } \KK[X]\]
correspond to 
\[\chi^{\overbar{(1,0)}} = \chi^{(0,2)}=x_2^2, \; \chi^{\overbar{(1,1)}} = \chi^{(1,1)}=x_1x_2, \; \chi^{\overbar{(1,2)}} = \chi^{(2,0)}=x_1^2\phantom{x_3} \text{ in } \KK[\barX],\]
respectively. 
\end{example}

\begin{definition}
Let $\pi\colon \barX \xrightarrow{/\!/H_X} X$ be the Cox construction and let $G, \barG$ be algebraic groups. We say that an action $\mu\colon G \times X \to X$ \emph{is coherent} with an action $\bar\mu\colon\barG \times \barX \to \barX$ if there exists an epimorphism $\pi'\colon \barG \to G$ such that the actions of $\barG$ and $H_X$ on $\barX$ commute and the following diagram is commutative. 
\[
\begin{diagram}
\node{\barG \times \barX} \arrow[2]{e,t}{\bar \mu} \arrow{s,r}{\pi'\times\pi} \node[2]{\barX} \arrow{s,r}{\pi} \\
\node{G \times X} \arrow[2]{e,t}{\mu} \node[2]{X}
\end{diagram}\]
\end{definition}

\smallskip

The following three lemmas allow us to ``lift'' actions on~$X$ to coherent actions on~$\barX$. 

\begin{lemma}
\label{lifttorus_lem}
The action of the torus $T$ on $X$ is coherent with the action of the torus $\barT$ on~$\barX$.
\end{lemma}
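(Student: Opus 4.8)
The plan is to verify the two conditions in the definition of coherence with $\barG = \barT$, $G = T$, and the epimorphism $\pi'\colon \barT \to T$ supplied by the dual exact sequence~\eqref{Coxexact2_eq}; throughout I keep the setting of Construction~\ref{Cox1_constr}.

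The first condition, that the $\barT$-action and the $H_X$-action on $\barX$ commute, is immediate: by~\eqref{Coxexact2_eq} the quasitorus $H_X$ sits inside $\barT$, and both act on $\barX = \AA^m$ as restrictions of the single diagonal action of $\barT$. Since $\barT$ is abelian, for $s \in \barT$ and $h \in H_X$ one has $s\cdot(h\cdot\bar x) = (sh)\cdot\bar x = (hs)\cdot\bar x = h\cdot(s\cdot\bar x)$.

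The substance is the second condition, the commutativity of the square, i.e. the $\pi'$-equivariance $\pi(s\cdot\bar x) = \pi'(s)\cdot\pi(\bar x)$. I would check this on coordinate algebras, where it becomes the equality $\bar\mu^*\circ\pi^* = ((\pi')^*\otimes\pi^*)\circ\mu^*$ of homomorphisms $\KK[X]\to\KK[\barT]\otimes\KK[\barX]$. Recall that $\pi^*\colon\KK[X]\hookrightarrow\KK[\barX]$ is the Cox inclusion $\chi^u\mapsto\chi^{\bar u}$ and $(\pi')^*\colon\KK[T]\to\KK[\barT]$ is $\chi^u\mapsto\chi^{\bar u}$, with $\bar u = (\langle p_1,u\rangle,\dots,\langle p_m,u\rangle)$. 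It is enough to evaluate both composites on the generators $\chi^u$, $u\in S_X$. With the conventions of Section~\ref{toric_sec}, the toric coaction is $\mu^*(\chi^u)=\chi^u\otimes\chi^u$ and the diagonal coaction of $\barT$ is $\bar\mu^*(\chi^{a})=\chi^{a}\otimes\chi^{a}$ for $a\in\barM$, so
\[
((\pi')^*\otimes\pi^*)\bigl(\mu^*(\chi^u)\bigr) = \chi^{\bar u}\otimes\chi^{\bar u} = \bar\mu^*(\chi^{\bar u}) = \bar\mu^*\bigl(\pi^*(\chi^u)\bigr),
\]
which is the desired identity on generators, hence for all of $\KK[X]$.

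The computation is elementary, and I expect no genuine obstacle beyond keeping conventions aligned: that $T$ acts on $\KK[X]_u$ by the character $t\mapsto t^u$ as fixed in Section~\ref{toric_sec}, that the diagonal $\barT$-action is read off with the matching sign, and that both $\pi^*$ and $(\pi')^*$ send $\chi^u$ to $\chi^{\bar u}$. Conceptually, the statement is simply that the diagonal $\barT$-action descends through the quotient $\pi$ to the toric $T$-action, since it commutes with $H_X$ and $T=\barT/H_X$; the coordinate check above makes this precise. Once the conventions match, the two composites agree, the square commutes, and coherence follows.
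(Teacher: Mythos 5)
Your proof is correct and follows essentially the same route as the paper: the commutation of the $\barT$- and $H_X$-actions from $H_X\subset\barT$ abelian, and the square checked on the characters $\chi^u$ using that both $\pi^*$ and $(\pi')^*$ send $\chi^u\mapsto\chi^{\bar u}$ (the paper fixes $\bar t\in\barT$ and verifies $\chi^u(t)=\chi^{\bar u}(\bar t)$, which is your coaction identity read pointwise). The only caveat is the one you already flag: with the paper's conventions the coaction is $\chi^u\mapsto\chi^{-u}\otimes\chi^u$ rather than $\chi^u\otimes\chi^u$, but since the same sign appears on the $\barT$ side the computation is unaffected.
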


\begin{proof}
Since the action of $H_X$ comes from the action of $\barT$ via $H_X \hookrightarrow \barT$ in~\eqref{Coxexact2_eq} and $\barT$ is commutative, the actions of $\barT$ and $H_X$ on $\barX$ commute as well. 

Let $\pi'$ be the epimorphism in~\eqref{Coxexact2_eq}. Notice that $\chi^u(t) = \chi^{\bar u}(\bar t)$ if $u \in M$, $\bar t \in \barT$, $t = \pi'(\bar t)$. This easily follows from the definitions of $T = \Hom(M, \KK^\times)$, $\barT = \Hom(\barM, \KK^\times)$, and the map $\pi'\colon \barT \to T$. 

Commutativity of the first diagram below is equivalent to commutativity of the second one for any $\bar t \in \barT$. The latter one follows from $\chi^u(t) = \chi^{\bar u}(\bar t)$ and the fact that the isomorphism $\pi^*\colon \KK[X] \hookrightarrow \KK[\barX]^{H_X} \subseteq \KK[\barX]$ maps $\chi^u$ to $\chi^{\bar u}$: 
\[
\begin{diagram}
\node{\barT \times \barX} \arrow[2]{e,t}{\bar \mu} \arrow{s,r}{\pi'\times\pi} \node[2]{\barX} \arrow{s,r}{\pi} \\
\node{T \times X} \arrow[2]{e,t}{\mu} \node[2]{X}
\end{diagram} \quad\quad 
\begin{diagram}
\node{\KK[\barX]} \node[2]{\KK[\barX]} \arrow[2]{w,t}{\bar \mu(\bar t, \,\cdot\,)^*} \\
\node{\KK[X]} \arrow{n,r,L}{\pi^*} \node[2]{\KK[X]} \arrow[2]{w,t}{\mu(\pi'(\bar t), \,\cdot\,)^*} \arrow{n,r,L}{\pi^*}
\end{diagram} \quad\quad
\begin{diagram}
\node{\chi^{\bar u}(\bar t)\chi^{\bar u}} \node[2]{\chi^{\bar u}} \arrow[2]{w,t,T}{\bar t} \\
\node{\chi^u(t)\chi^u} \arrow{n,r,T}{\pi^*}  \node[2]{\chi^u} \arrow[2]{w,t,T}{t} \arrow{n,r,T}{\pi^*}
\end{diagram}
\]
\end{proof}

\begin{lemma}
\label{liftGm_lem}
The action of the subtorus $\Ker \chi^u \subset T$ on $X$ corresponding to any $u \in M$ is coherent with the action of the subtorus $\Ker \chi^{\bar u} \subset \barT$ on $\barX$ corresponding to $\bar u \in \barM$. 
\end{lemma}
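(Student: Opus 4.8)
The plan is to deduce this lemma from Lemma~\ref{lifttorus_lem} by restricting the epimorphism $\pi'\colon \barT \to T$ of~\eqref{Coxexact2_eq} to the relevant subtori. First I would take as the candidate epimorphism the restriction of $\pi'$ to $\Ker\chi^{\bar u}$ and check that its image lies in $\Ker\chi^u$. Recall from the proof of Lemma~\ref{lifttorus_lem} the identity $\chi^u(\pi'(\bar t)) = \chi^{\bar u}(\bar t)$, valid for every $\bar t \in \barT$ and $u \in M$. Consequently $\bar t \in \Ker\chi^{\bar u}$ if and only if $\pi'(\bar t) \in \Ker\chi^u$, so $\pi'$ maps $\Ker\chi^{\bar u}$ into $\Ker\chi^u$, and moreover $(\pi')^{-1}(\Ker\chi^u) = \Ker\chi^{\bar u}$.

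Next I would establish surjectivity. Since $\pi'\colon \barT \to T$ is surjective and $(\pi')^{-1}(\Ker\chi^u) = \Ker\chi^{\bar u}$, we obtain $\pi'(\Ker\chi^{\bar u}) = \pi'\bigl((\pi')^{-1}(\Ker\chi^u)\bigr) = \Ker\chi^u$, so the restriction $\pi'|_{\Ker\chi^{\bar u}}$ is the required epimorphism onto $\Ker\chi^u$. I would then note that the actions of $\Ker\chi^{\bar u}$ and $H_X$ on $\barX$ commute: both are restrictions of the diagonal $\barT$-action on $\barX$, since $\Ker\chi^{\bar u}\subset\barT$ and $H_X\hookrightarrow\barT$ by~\eqref{Coxexact2_eq}, and $\barT$ is commutative. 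This is exactly the argument used in Lemma~\ref{lifttorus_lem}.

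Finally I would verify the coherence square. By definition the action of $\Ker\chi^u$ on $X$ is the restriction of the $T$-action, and the action of $\Ker\chi^{\bar u}$ on $\barX$ is the restriction of the $\barT$-action; together with the first step, $\pi'$ restricts to a map $\Ker\chi^{\bar u}\to\Ker\chi^u$ compatible with these restrictions. Hence the coherence diagram for the pair $(\Ker\chi^{\bar u},\,\Ker\chi^u)$ is the restriction of the commutative coherence square of Lemma~\ref{lifttorus_lem} for the pair $(\barT,\,T)$ to the subtori, and commutativity is inherited. The only point requiring any verification is the surjectivity of the restricted map, which is the elementary set-theoretic computation carried out above; accordingly I do not expect a genuine obstacle, the lemma being essentially a restriction of Lemma~\ref{lifttorus_lem}.
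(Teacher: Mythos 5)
Your proposal is correct and follows essentially the same route as the paper: both deduce the lemma from Lemma~\ref{lifttorus_lem} by using the identity $\chi^u(\pi'(\bar t)) = \chi^{\bar u}(\bar t)$ to see that $\Ker\chi^{\bar u}$ is the preimage of $\Ker\chi^u$ under $\pi'$, so that the exact sequence, the commuting of the actions with $H_X$, and the coherence square all restrict from the pair $(\barT, T)$ to the subtori. Your explicit verification of surjectivity of the restricted map is a detail the paper leaves implicit in the phrase ``the exact sequence restricts,'' but it is the same argument.
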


\begin{proof}
The actions of $H_X$ and $\barT$ on $\barX$ commute by Lemma~\ref{lifttorus_lem}, whence the actions of $\Ker \chi^{\bar u} \subset \barT$ and $H_X$ on $\barX$ commute as well. 

Let $\pi'$ be the epimorphism in~\eqref{Coxexact2_eq}. We have $\chi^u(t) = \chi^{\bar u}(\bar t)$ for $u \in M$, $\bar t \in \barT$, and $t = \pi'(\bar t)$, so $\Ker \chi^{\bar u}$ is the preimage of $\Ker \chi^u$ and the exact sequence~\eqref{Coxexact2_eq} restricts to
\[1 \leftarrow \Ker \chi^u \xleftarrow{\pi'} \Ker \chi^{\bar u} \leftarrow H_X \leftarrow 1.\] 
Since the diagram for $\barT$ and $T$ is commutative by Lemma~\ref{lifttorus_lem}, the required diagram with $\Ker \chi^{\bar u}$ and $\Ker \chi^u$ is commutative as well. 
\end{proof}

\smallskip

Let us notice that if $e \in \mathfrak{R}_i$ is a Demazure root of $X$, then $\bar e = (\langle p_1, e\rangle, \ldots, \langle p_m, e\rangle)$ is a Demazure root of~$\barX = \AA^m$ since $\langle p_i, e\rangle = -1$ and $\langle p_j, e\rangle \ge 0$, $j \ne i$, see Example~\ref{A2_example}. 

\begin{lemma}
\label{liftGa_lem}
The $\GG_a$-action on $X$ corresponding to a Demazure root $e \in M$ is coherent with the $\GG_a$-action on $\barX$ corresponding to the Demazure root $\bar e \in \barM$. 
\end{lemma}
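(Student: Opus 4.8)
The plan is to prove Lemma~\ref{liftGa_lem} by directly comparing the two $\GG_a$-actions through the Cox quotient map $\pi\colon \barX \to X$, using the explicit description of locally nilpotent derivations attached to Demazure roots. Recall that the $\GG_a$-action on $X$ corresponding to $e \in \mathfrak{R}_i$ is $\exp(\alpha \pa_e)$ with $\pa_e(\chi^u) = \langle p_i, u\rangle \chi^{u+e}$, and the $\GG_a$-action on $\barX = \AA^m$ corresponding to $\bar e$ is $\exp(\alpha \pa_{\bar e})$ with $\pa_{\bar e}(x^a) = a_i x^{a + \bar e}$ (where $\bar e_i = \langle p_i, e\rangle = -1$), i.e. in coordinates $\pa_{\bar e} = x_1^{\bar e_1} \cdots \widehat{x_i} \cdots x_m^{\bar e_m}\,\tfrac{\pa}{\pa x_i}$ with no $x_i$ in the monomial, by Example~\ref{A2_example}. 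Since both actions are $\GG_a = \GG_m^0 \times \GG_a$ and the epimorphism $\pi'$ in the coherence diagram should be the identity $\GG_a \to \GG_a$, the two nontrivial conditions to verify are that the $\GG_a$-action on $\barX$ commutes with the $H_X$-action, and that the square
\[
\begin{diagram}
\node{\GG_a \times \barX} \arrow[2]{e,t}{\bar\mu} \arrow{s,r}{\mathrm{id}\times\pi} \node[2]{\barX} \arrow{s,r}{\pi} \\
\node{\GG_a \times X} \arrow[2]{e,t}{\mu} \node[2]{X}
\end{diagram}
\]
commutes.

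First I would check commutativity with $H_X$. The derivation $\pa_{\bar e}$ is homogeneous of degree $\bar e$ with respect to the $\barM$-grading on $\KK[\barX]$, and $\bar e$ lies in the image of $M \to \barM$ (it is $\bar e$ for $e \in M$), so its class $[\bar e] = 0$ in $\Cl(X)$ by exactness of~\eqref{Coxexact_eq}. Hence $\pa_{\bar e}$ has $\Cl(X)$-degree zero, which is exactly the condition that the corresponding $\GG_a$-action is normalized by---in fact commutes with---the $H_X$-action defining the $\Cl(X)$-grading (as explained in Section~\ref{toric_sec}, LNDs of degree zero with respect to a grading correspond to actions commuting with the grading quasitorus). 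This settles the first coherence requirement.

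Next I would verify the commutativity of the square at the level of algebras, i.e. that the dual diagram of comorphisms commutes. Under the isomorphism $\pi^*\colon \KK[X] \xrightarrow{\sim} \KK[\barX]^{H_X}$, $\chi^u \mapsto \chi^{\bar u} = x^{\bar u}$, the claim reduces to the single identity
\[
\pi^*\bigl(\exp(\alpha\pa_e)(\chi^u)\bigr) = \exp(\alpha\pa_{\bar e})\bigl(\pi^*(\chi^u)\bigr) \quad\text{for all } u \in S_X,\ \alpha \in \GG_a.
\]
The left side, by~\eqref{rk1act_eq}, is $\pi^*\bigl(\sum_{j} \binom{\langle p_i,u\rangle}{j}\alpha^j \chi^{u+je}\bigr) = \sum_j \binom{\langle p_i,u\rangle}{j}\alpha^j x^{\overline{u+je}} = \sum_j \binom{\langle p_i,u\rangle}{j}\alpha^j x^{\bar u + j\bar e}$, using linearity of $u \mapsto \bar u$. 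For the right side I would compute $\pa_{\bar e}(x^{\bar u}) = \bar u_i\, x^{\bar u + \bar e} = \langle p_i, u\rangle\, x^{\bar u + \bar e}$ and iterate; since $\pa_{\bar e}$ raises the $i$-th exponent by $\bar e_i = -1$ each time, the binomial coefficients match those produced by $\pa_e$, giving $\exp(\alpha\pa_{\bar e})(x^{\bar u}) = \sum_j \binom{\langle p_i,u\rangle}{j}\alpha^j x^{\bar u + j\bar e}$ as well. The two expressions agree, so the square commutes.

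The main obstacle, modest as it is, lies in the bookkeeping of the third step: one must confirm that the iterated action of $\pa_{\bar e}$ on $x^{\bar u}$ reproduces exactly the same finite binomial sum as $\pa_e$ on $\chi^u$, and in particular that the sum terminates at $j = \langle p_i, u\rangle$. This is guaranteed because $\bar e_i = -1$ forces the $i$-th coordinate of $\bar u + j\bar e$ down to zero at $j = \langle p_i, u\rangle$ and negative thereafter, so the monomial leaves $\KK[\barX]^{H_X}$ and $\pa_{\bar e}$ annihilates it---mirroring the fact that $\chi^{u+je} \notin \KK[X]$ for $j > \langle p_i, u\rangle$. Once this compatibility of the two nilpotency patterns is pinned down, coherence follows immediately.
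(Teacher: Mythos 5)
Your proposal is correct and follows essentially the same route as the paper: the $H_X$-commutation is established by observing that $[\bar e]=0$ in $\Cl(X)$ (the paper phrases this equivalently as $H_X\subseteq\Ker\chi^{\bar e}$), and the square is verified through the explicit formulas $\pa_e(\chi^u)=\langle p_i,u\rangle\chi^{u+e}$ and $\pa_{\bar e}(\chi^{\bar u})=\bar u_i\chi^{\bar u+\bar e}$, the paper checking it at the level of derivations and you at the level of their exponentials.
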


\begin{proof}
The actions of $H_X$ and $\GG_a = \{\exp s\pa_{\bar e} \mid s \in \KK\}$ on $\barX = \AA^m$ commute according to the exact sequence in the proof of the previous lemma: $H_X$ acts as a subgroup of $\Ker \chi^{\bar e}$, which commutes with the $\GG_a$-action corresponding to $\bar e$. 

Let $\pi'$ be the isomorphism whose dual identifies $\exp s \pa_{\bar e}$ with $\exp s \pa_e$. It is sufficient to check commutativity of the following diagram:
\[
\begin{diagram}
\node{\KK[\barX]} \node[2]{\KK[\barX]} \arrow[2]{w,t}{\pa_{\bar e}} \\
\node{\KK[X]} \arrow{n,r,L}{\pi^*}  \node[2]{\KK[X]} \arrow[2]{w,t}{\pa_e} \arrow{n,r,L}{\pi^*}
\end{diagram}
\]

\medskip

\noindent This holds since $\pi^*$ identifies $\pa_e(\chi^u) = \langle p_i, u\rangle \chi^{u + e}$ with $\pa_{\bar e}(\chi^{\bar u}) = \langle e_i, \bar u\rangle \chi^{\bar u + \bar e}$, where $\langle e_i, \bar u\rangle = \langle p_i, u\rangle$ is the $i$-th coordinate of $\bar u \in \barM = \ZZ^m$. 
\end{proof}

%%%%%%%%%%%%%%%%%%%%%%%%%%%%%%%%%%%%%%%%%
\begin{construction}
\label{Cox2_constr}
Now assume that $\sigma(1)$ does not span $N_\QQ$, i.e. $\sigma$ belongs to a subspace of some codimension $\widetilde m$ spanning the lattice $N_0 := \Span(\sigma) \cap N$. 
This means that $X$ splits into the direct product of a torus $(\KK^\times)^{\widetilde m}$ and a toric variety $X_0$ whose cone $\sigma$ spans~$(N_0)_\QQ$. 
Then we have the above Cox construction for $X_0$, i.e. $X_0$ is the categorical quotient of an affine space $\overbar{X_0} = \AA^m$ by the action of the quasitorus $H_X := \Hom(\Cl(X_0), \KK^\times) = \Hom(\Cl(X), \KK^\times)$. 
Consider the trivial extension of this action to the action on $\barX := \AA^m \times (\KK^\times)^{\widetilde m}$. The categorical quotient by the action of the quasitorus $H_{X}$ is the given variety $X = X_0 \times (\KK^\times)^{\widetilde m}$. More precisely, let us denote the Cox coordinates on $\overbar{X_0} = \AA^m$ by $x_i$, $1 \le i \le m$, and the coordinates on~$(\KK^\times)^{\widetilde m}$ by $x_i \in \KK^\times$, $m < i \le m + \widetilde m$. 
Then we have $\KK[\barX] = \KK[x_1, \ldots, x_m, x_{m+1}^{\pm 1}, \ldots, x_{m+\widetilde m}^{\pm 1}]$, and
the ring of invariants equals \[\KK[\barX]^{H_X} = \KK[x_1, \ldots, x_m]^{H_X}[x_{m+1}^{\pm 1}, \ldots, x_{m+\widetilde m}^{\pm 1}] \cong \KK[X_0][x_{m+1}^{\pm 1}, \ldots, x_{m+\widetilde m}^{\pm 1}] = \KK[X],\] 
i.e. the inclusion of algebras dual to the categorical quotient is given by $\chi^u \mapsto \chi^{\bar u}$, where 
\begin{gather*}
u = u_0 + \tilde u \in S_X = S_{X_0} + \ZZ^{\widetilde m} \subseteq M = M_0 + \ZZ^{\widetilde m}, \\ 
\bar u = (\langle p_1, u\rangle, \ldots, \langle p_m, u\rangle, \tilde u_{m+1}, \ldots, \tilde u_{m+\widetilde m}) \in \barM = \overbar{M_0} + \ZZ^{\widetilde m}.
\end{gather*}

Denote by $T_0$ the torus acting on $X_0$ with an open orbit. One can easily see that $T = T_0 \times (\KK^\times)^{\widetilde m}$ and $\barT = \overbar{T_0} \times (\KK^\times)^{\widetilde m}$, and 
the sequences~\eqref{Coxexact_eq} and~\eqref{Coxexact2_eq} are exact as well. Then Lemma~\ref{lifttorus_lem} and Lemma~\ref{liftGm_lem} hold for $\barX = \overbar{X_0} \times (\KK^\times)^{\widetilde m}$. 

Finally, let us notice that the set of Demazure roots $\mathfrak{R}_i \subseteq M$ of $X$ splits into the direct sum of the set of Demazure roots $\mathfrak{R}_{i,0} \subseteq M_0$ of $X_0$ and $\ZZ^{\widetilde m}$. 
According to~Example~\ref{barX_example}, for a Demazure root $e \in M$ of $X$, an element $\bar e \in \barM$ is a Demazure root of $\barX = \AA^m \times (\KK^\times)^{\widetilde m}$. Then Lemma~\ref{liftGa_lem} holds for $\barX = \overbar{X_0} \times (\KK^\times)^{\widetilde m}$ as well. 
\end{construction}

%%%%%%%%%%%%%%%%%%%%%%%%%%%%%%%%%%%%%%%%%%%%%%
\section{The Cox construction language}
\label{Coxlang_sec}

Let $X$ be a normal affine toric variety and $\pi\colon \barX \to X$ be the Cox construction described in Section~\ref{Cox_sec}. 

\begin{definition}
\label{coher_mon_def}
Let $X$ and $\barX$ admit monoid structures. A multiplication ${\mu\colon X \times X \to X}$ is said to be \emph{coherent} with a multiplication $\bar \mu\colon \barX \times \barX \to \barX$ if the following diagram is commutative. 
\[
\begin{diagram}
\node{\barX \times \barX} \arrow[2]{e,t}{\bar \mu} \arrow{s,r}{\pi\times\pi} \node[2]{\barX} \arrow{s,r}{\pi} \\
\node{X \times X} \arrow[2]{e,t}{\mu} \node[2]{X}
\end{diagram}\]
\end{definition}

\medskip

\begin{lemma}
\label{act_mon_coh_lem}
Assume that $G$ and $\barG$ are commutative affine algebraic groups acting coherently with open orbits on $X$ and $\barX$, respectively. Let $\bar x_0 \in \barX$ be a point in the open orbit and $x_0 = \pi(\bar x_0)$. Consider the multiplications on $X$ and $\barX$ corresponding to these actions and base points, see Construction~\ref{monoid_constr}. Then these multiplications are coherent. 
\end{lemma}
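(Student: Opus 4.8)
The plan is to verify the commutativity of the square in Definition~\ref{coher_mon_def} on a dense subset and then upgrade it to an equality of morphisms by separatedness. Recall from Construction~\ref{monoid_constr} that the multiplication $\bar\mu$ on $\barX$ (resp.\ $\mu$ on $X$) restricts on the open orbit to the group law transported through the orbit map: if $\bar x = \bar g_1 \cdot \bar x_0$ and $\bar y = \bar g_2 \cdot \bar x_0$ with $\bar g_1, \bar g_2 \in \barG$, then $\bar\mu(\bar x, \bar y) = (\bar g_1 \bar g_2) \cdot \bar x_0$, and analogously on $X$ with $G$ and $x_0$.

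First I would record the compatibility of $\pi$ with the actions that follows from coherence of the \emph{actions}: the commutative square in that definition says precisely that $\pi(\bar g \cdot \bar z) = \pi'(\bar g) \cdot \pi(\bar z)$ for all $\bar g \in \barG$ and $\bar z \in \barX$, where $\pi'\colon \barG \to G$ is the given epimorphism. In particular $\pi(\barG \cdot \bar x_0) = \pi'(\barG) \cdot \pi(\bar x_0) = G \cdot x_0$. Since $\barG \cdot \bar x_0$ is open and dense in $\barX$ and $\pi$ is surjective, $G \cdot x_0$ is dense in $X$, hence is the open $G$-orbit. Thus $x_0 = \pi(\bar x_0)$ lies in the open orbit, so the multiplication $\mu$ furnished by Construction~\ref{monoid_constr} with base point $x_0$ is exactly the one under consideration.

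Then I would compute both routes around the square on the dense open subset $U := (\barG \cdot \bar x_0) \times (\barG \cdot \bar x_0) \subseteq \barX \times \barX$. Taking $\bar x = \bar g_1 \cdot \bar x_0$ and $\bar y = \bar g_2 \cdot \bar x_0$, the right-then-down route gives $\pi(\bar\mu(\bar x, \bar y)) = \pi((\bar g_1 \bar g_2)\cdot \bar x_0) = \pi'(\bar g_1 \bar g_2)\cdot x_0 = (\pi'(\bar g_1)\pi'(\bar g_2))\cdot x_0$, using the coherence identity and that $\pi'$ is a group homomorphism. For the down-then-right route, $\pi(\bar x) = \pi'(\bar g_1)\cdot x_0$ and $\pi(\bar y) = \pi'(\bar g_2)\cdot x_0$ lie in the open $G$-orbit, so the explicit form of $\mu$ yields $\mu(\pi(\bar x), \pi(\bar y)) = (\pi'(\bar g_1)\pi'(\bar g_2))\cdot x_0$. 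The two expressions coincide on $U$.

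Finally, both $\pi\circ\bar\mu$ and $\mu\circ(\pi\times\pi)$ are morphisms $\barX\times\barX \to X$; since $\barX$ is irreducible, $\barX\times\barX$ is an irreducible (in particular reduced) variety, $U$ is dense in it, and $X$ is separated, so two morphisms agreeing on $U$ agree everywhere. This is exactly the commutativity demanded by Definition~\ref{coher_mon_def}. The only genuine points requiring care are the identification of the multiplications with the transported group law on the open orbits and the density argument; everything else follows mechanically from coherence of the actions and the functoriality built into Construction~\ref{monoid_constr}. I expect the main (mild) obstacle to be checking that $x_0$ really lands in the open $G$-orbit, so that the base-point hypothesis of Construction~\ref{monoid_constr} is met on the $X$-side — this is precisely what the density of $\barG\cdot\bar x_0$ together with surjectivity of $\pi$ resolves.
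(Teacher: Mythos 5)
Your proof is correct and follows essentially the same route as the paper's: both reduce coherence of the multiplications to coherence of the actions via the compatibility $\pi\circ\iota_{\barG}=\iota_G\circ\pi'$ of the orbit maps, i.e.\ the fact that $\pi$ extends $\pi'$ after identifying the groups with their open orbits. You merely make explicit the verification on the dense subset $(\barG\cdot\bar x_0)\times(\barG\cdot\bar x_0)$ and the extension by density/separatedness, which the paper leaves implicit.
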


\begin{proof}
Since the actions are coherent and $x_0 = \pi(\bar x_0)$, the diagram
\[
\begin{diagram}
\node{\barG \parbox[t][4pt]{0pt}{}} \arrow[2]{e,t,J}{} \arrow{s,r}{\pi'} \node[2]{\barX\parbox[t][4pt]{0pt}{}} \arrow{s,r}{\pi} \\
\node{G\parbox[t][4pt]{0pt}{}} \arrow[2]{e,t,J}{} \node[2]{X\parbox[t][4pt]{0pt}{}}
\end{diagram}\]
\parbox[b][15pt]{0pt}{}is commutative, i.e. $\pi$ is an extension of $\pi'$ after the identifications of the groups with their open orbits. Then coherency of the actions implies coherency of the multiplications. 
\end{proof}

\begin{theorem}
\label{Cox_theor}
Let $X$ be a normal affine algebraic variety of dimension~$n$ with monoid structure of rank $0$, $n-1$ or~$n$. Then $X$ is toric, and the multiplication on~$X$ is coherent with a multiplication $\bar x \cdot \bar y$ on~$\barX = \AA^m \times (\KK^\times)^{\widetilde m}$. Up to isomorphism of monoid structures on~$X$ and~$\barX$, the product $\bar x \cdot \bar y$ is given by the following formulas in coordinates $x_i$, $1 \le i \le m + \widetilde m$. 

1) If $r = n$, then
\[\bar x \cdot \bar y = (x_1y_1, \ldots, x_{m + \widetilde m}y_{m + \widetilde m}).\]

2) If $r = n-1$, then
\[\bar x \cdot \bar y = 
(x_1y_1, \ldots, x_{i-1}y_{i-1}, x_i y^{\bar e, i} + x^{\bar e, i} y_i, x_{i+1}y_{i+1}, \ldots, x_{m+\widetilde m}y_{m+\widetilde m}),\]
where $1 \le i \le m$, $p_i$ is the primitive vector on the ray $\rho_i$ of the fan of $X$, an element $e = (e_1, \ldots, e_m, \tilde e_{m+1}, \ldots, \tilde e_{m + \widetilde m})$ in $\mathfrak{R}_i = \mathfrak{R}_{i,0} + \ZZ^{\widetilde m}$ is a Demazure root corresponding to this ray, and $x^{\bar e, i}$ denotes the monomial $\!\!\prod\limits_{\substack{1 \le j \le m\\j \ne i}} \!\! x_j^{\langle p_j, e\rangle} \!\!\!\!\!\! \prod\limits_{m < j \le m + \widetilde m} \!\!\!\!\! x_j^{\tilde e_j}$. 

3) If $r = 0$, then $\barX = X = \AA^n$ and
\[\bar x \cdot \bar y = (x_1 + y_1, \ldots, x_n+y_n).\]
\end{theorem}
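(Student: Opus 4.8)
The plan is to lift the group action that produces the monoid structure on $X$ to a coherent action on $\barX$, transport the structure by Lemma~\ref{act_mon_coh_lem}, and then read off the product on $\barX$ from Theorem~\ref{comult_theor} applied to the toric variety $\barX$ itself. First, Theorem~\ref{comult_theor} shows that $X$ is toric, so the Cox construction $\pi\colon\barX\to X$ of Section~\ref{Cox_sec} is available (via Construction~\ref{Cox2_constr} when $\sigma(1)$ does not span $N_\QQ$). By Construction~\ref{monoid_constr} the monoid structure on $X$ comes from an effective $G$-action with an open orbit, where $G = T$ if $r = n$; where $G = \Ker\chi^e\times\GG_a$ for a Demazure root $e\in\mathfrak{R}_i$ if $r = n-1$; and where $X = \AA^n$ with $G = \GG_a^n$ if $r = 0$.

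Next I would lift this action. For $r = n$, Lemma~\ref{lifttorus_lem} lifts the $T$-action to the $\barT$-action on $\barX$. For $r = n-1$, Lemma~\ref{liftGm_lem} lifts $\Ker\chi^e$ to $\Ker\chi^{\bar e}\subset\barT$, and Lemma~\ref{liftGa_lem} lifts the $\GG_a$ of the root $e$ to the $\GG_a$ of the root $\bar e$, which is a Demazure root of $\barX$ by Example~\ref{barX_example}; taking $\barG = \Ker\chi^{\bar e}\times\GG_a$ with $\pi'$ the product of the two epimorphisms, coherency of the product action follows from coherency of its commuting factors. For $r = 0$ nothing is to be done, since $\barX = X = \AA^n$. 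In each case the lifted $\barG$-action again has an open orbit: this is clear for the torus, and in the corank-$1$ case it is precisely the statement that $\barX$ is a group embedding of $\Ker\chi^{\bar e}\times\GG_a$, which holds by \cite[Proposition~6]{AK} since $\bar e$ is a Demazure root of $\barX$. Choosing a base point $\bar x_0$ in the open $\barG$-orbit and setting $x_0 = \pi(\bar x_0)$, which lies in the open $G$-orbit as $\pi$ is surjective and equivariant, Lemma~\ref{act_mon_coh_lem} yields coherent monoid structures on $X$ and $\barX$.

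It remains to compute the product on $\barX$. The obtained structure is one of those classified in Theorem~\ref{comult_theor}, now applied to the toric variety $\barX = \AA^m\times(\KK^\times)^{\widetilde m}$, whose primitive ray generators are the standard basis vectors $e_1,\dots,e_m$ (Example~\ref{barX_example}). Writing $x_j = \chi^{f_j}$ for the coordinate functions and dualizing the comultiplication gives the three formulas: for $r = n$ the toric comultiplication $\chi^{\bar u}\mapsto\chi^{\bar u}\otimes\chi^{\bar u}$ yields the coordinatewise product; for $r = n-1$ the comultiplication $\chi^{\bar u}\mapsto\chi^{\bar u}\otimes\chi^{\bar u}(1\otimes\chi^{\bar e}+\chi^{\bar e}\otimes 1)^{\langle e_i,\bar u\rangle}$, evaluated on $x_j$ so that $\langle e_i,f_j\rangle=\delta_{ij}$, gives $x_j\mapsto x_j\otimes x_j$ for $j\ne i$ and $x_i\mapsto x_i\otimes x^{\bar e,i}+x^{\bar e,i}\otimes x_i$, using $\chi^{f_i+\bar e}=x^{\bar e,i}$ because $\bar e_j=\langle p_j,e\rangle$ for $j\le m$ and $\bar e_j=\tilde e_j$ otherwise; and for $r = 0$ the vector comultiplication gives the additive product. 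These dualize to the displayed products.

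The main obstacle is the corank-$1$ case. One must verify that combining the lift of $\Ker\chi^e$ with the lift of the $\GG_a$ yields a coherent action of the product group possessing an open orbit on $\barX$, invoking \cite{AK} for $\barX$, and then carry out the bookkeeping identifying $\chi^{f_i+\bar e}$ with the monomial $x^{\bar e,i}$ and the pairing $\langle e_i,\bar u\rangle$ with the $i$-th coordinate of $\bar u$, together with the reduction to the torus-factor situation through Construction~\ref{Cox2_constr}.
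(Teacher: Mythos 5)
Your proposal is correct and follows essentially the same route as the paper: identify the group action producing the monoid structure, lift it to $\barX$ via Lemmas~\ref{lifttorus_lem}, \ref{liftGm_lem} and~\ref{liftGa_lem}, invoke \cite[Proposition~6]{AK} to get the open orbit of $\Ker\chi^{\bar e}\times\GG_a$ on $\barX$, and transport the monoid structure coherently via Lemma~\ref{act_mon_coh_lem}. The only (harmless) difference is in the endgame: the paper reads off the product on $\barX$ directly from the orbit-map parametrization $(t,\alpha)\mapsto(t_1,\ldots,t_i\alpha,\ldots,t_{m+\widetilde m})$, whereas you obtain it by applying Theorem~\ref{comult_theor} to the toric variety $\barX$ with the root $\bar e$ and dualizing the comultiplication, using $\chi^{f_i+\bar e}=x^{\bar e,i}$ --- both computations yield the same formula.
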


\begin{proof}
1) By the same argument as in Theorem~\ref{comult_theor}, $X$ is toric and the multiplication comes from the action of the torus~$T$ on $X$ with an open orbit. By Lemma~\ref{lifttorus_lem}, it is coherent with the diagonal action of the torus~$\barT = (\KK^\times)^{m + \widetilde m}$ on~$\barX = \AA^m \times (\KK^\times)^{\widetilde m}$. Taking a base point $\bar x_0 = (1, \ldots, 1)$, we obtain componentwise multiplication on~$\barX$ in accordance with Construction~\ref{monoid_constr}. By Lemma~\ref{act_mon_coh_lem}, it is coherent with monoid structure of rank~$n$ on~$X$. 

2) Arguing as in Theorem~\ref{comult_theor}, it can be shown that $X$ is toric and any monoid structure on $X$ with the group of invertible elements $G = \GG_m^{n-1} \times \GG_a$ is given by the action of $G = \Ker \chi^e \times \GG_a$ on $X$, where $e \in \mathfrak{R}_i$ is a Demazure root and $\GG_a$-action corresponds to $e$. By Lemma~\ref{liftGa_lem}, the action of $\GG_a$ on $X$ is coherent with the action of $\GG_a$ on $\barX$ corresponding to the Demazure root~$\bar e$. By Lemma~\ref{liftGm_lem}, the action of $\Ker \chi^e$ on $X$ is coherent with the action of $\Ker \chi^{\bar e}$ on $\barX$. According to~\cite[Proposition~6]{AK}, $\barX$ is a group embedding of their product $\Ker \chi^{\bar e} \times \GG_a$ for any base point~$\bar x_0$ in the open orbit, and whence we have a monoid structure on~$\barX$. By Lemma~\ref{act_mon_coh_lem}, it is coherent with monoid structure on~$X$ defined by the action of the group~$G$ with the base point~$x_0 = \pi(\bar x_0)$. 

The derivation corresponding to the Demazure root $\bar e = (\langle p_1, e\rangle, \ldots, \langle p_m, e\rangle, \tilde e_1, \ldots, \tilde e_m)$ of~$\barX$ is $\pa_{\bar e} = x^{\bar e, i} \frac{\pa}{\pa x_i}$, and an element $(t, \alpha) \in \Ker \chi^{\bar e} \times \GG_a$ acts as follows: 
\[(t, \alpha) \cdot \bar x = 
\bigl(t_1x_1, \ldots, t_{i-1}x_{i-1}, \; t_i(x_i + \alpha x^{\bar e, i}), \, 
t_{i+1}x_{i+1}, \ldots, t_{m+\widetilde m}x_{m+\widetilde m}\bigr).\] 
According to Construction~\ref{monoid_constr}, take a point $\bar x_0 = (1, \ldots, 1, \underset{i}{0}, 1, \ldots, 1) \in \barX$ in the open orbit and consider the embedding $\Ker \chi^{\bar e} \times \GG_a \hookrightarrow \barX$: 
\[(t, \alpha) \mapsto (t, \alpha) \cdot \bar x_0 = (t_1, \ldots, t_{i-1}, t_i\alpha, t_{i+1}, \ldots, t_{m+\widetilde m}) =: \bar y = (y_1, \ldots, y_{m + \widetilde m}).\]
Since $t \in \Ker \chi^{\bar e}$, we have $t_i^{-1} t^{\bar e, i} = 1$, whence the substitution $t_j = y_j$ for $j \ne i$ and $t_i\alpha = y_i$ gives the required multiplication on~$\barX$:
\[
\bar x \cdot \bar y = (x_1y_1, \ldots, y^{\bar e, i} x_i + y_i x^{\bar e, i}, \ldots, x_{m+\widetilde m}y_{m+\widetilde m}).
\]

3) The same argument used in Theorem~\ref{comult_theor} shows that $X = \AA^n$ with the vector monoid structure. Since the total coordinate space~$\barX$ in this case coincides with~$X$, we have the same multiplication on it. 
\end{proof}

\begin{remark}
One can verify that the multiplications in Theorems~\ref{comult_theor} and~\ref{Cox_theor} are coherent. For instance, the multiplication in Theorem~\ref{Cox_theor}, 2) corresponds to the following comultiplication $\KK[x_1, \ldots, x_{m+\widetilde m}]^{H_X} \to \KK[x_1, \ldots, x_{m+\widetilde m}]^{H_X} \otimes \KK[y_1, \ldots, y_{m+\widetilde m}]^{H_X}$ in terms of evaluating homomorphisms: if $\bar x, \bar y \in \barX$ have coordinates $x_i, y_i$, $1 \le i \le m + \widetilde m$, then a function
\[\chi^{\bar u}(\bar x) = \!\!\prod\limits_{1 \le j \le m} \!\! x_j^{\langle p_j, u\rangle} \!\!\!\! \prod\limits_{m < j \le m+\widetilde m} \!\!\!\!\! x_j^{\tilde e_j}\] evaluated at the point $\bar x \cdot \bar y$ equals
\begin{multline*}
\chi^{\bar u}(\bar x \cdot \bar y) 
= \bigl(
x_i y^{\bar e, i} + y_i x^{\bar e, i}
\bigr)^{\langle p_i, u\rangle}
\!\!\prod_{\substack{1 \le j \le m \\ j \ne i}} (x_jy_j)^{\langle p_j, u\rangle} 
\!\!\!\!\! \prod\limits_{m < j \le m+\widetilde m} \!\!\!\! (x_j y_j)^{\tilde e_j} = 
\\ =
\bigl(
y_i^{-1} y^{\bar e, i} + x_i^{-1} x^{\bar e, i}
\bigr)^{\langle p_i, u\rangle}
\!\!\prod_{1 \le j \le m} (x_jy_j)^{\langle p_j, u\rangle}
\!\!\!\!\! \prod\limits_{m < j \le m+\widetilde m} \!\!\!\! (x_j y_j)^{\tilde e_j}
=
\bigl(\chi^{\bar e}(y) + \chi^{\bar e}(x)\bigr)^{\langle p_i, u\rangle} \chi^{\bar u}(x) \chi^{\bar u}(y).
\end{multline*}
Since the isomorphism $\KK[X] \to \KK[x_1, \ldots, x_{m+\widetilde m}]^{H_X}$ is given by $\chi^u \mapsto \chi^{\bar u}$, we obtain the comultiplication $\KK[X] \to \KK[X] \otimes \KK[X]$ from Theorem~\ref{comult_theor}, 2). 
\end{remark}

\begin{example}
\label{dim3_Coxexample}
Let $X$ be the hypersurface $\{ab = cd\} \subseteq \AA^4$, see Examples~\ref{dim3_1example},~\ref{dim3_multexample}, and~\ref{dim3_2example}. The multiplication on~$\barX = \AA^4$ corresponding to $e = (-1, l_2, l_3) \in \mathfrak{R}_1$ is given by
\[
(x_1, x_2, x_3, x_4) \cdot (y_1, y_2, y_3, y_4) = (x_1y_2^{l_2}y_3^{l_3-1}y_4^{l_2+l_3} + x_2^{l_2}x_3^{l_3-1}x_4^{l_2+l_3}y_1, x_2y_2, x_3y_3, x_4y_4). 
\]
It is easy to verify that this monoid structure on~$\AA^4$ is coherent with monoid structure on~$X$ from Example~\ref{dim3_multexample}. Indeed, let $x = \pi(x_1, x_2, x_3, x_4), y = \pi(y_1, y_2, y_3, y_4)$ be the images in~$X$. According to the Cox construction given in Example~\ref{dim3_2example} we obtain 
\begin{gather*}
a(x*y) = (x_1y_2^{l_2}y_3^{l_3-1}y_4^{l_2+l_3} + x_2^{l_2}x_3^{l_3-1}x_4^{l_2+l_3}y_1)\cdot(x_3y_3) = \hspace{0.3\textwidth} \\ 
= x_1 x_3 \cdot (y_2y_4)^{l_2} \cdot (y_3y_4)^{l_3} +  (x_2x_4)^{l_2} \cdot (x_3x_4)^{l_3} \cdot y_1 y_3 = \\
\hspace{0.4\textwidth} = a(x) b(y)^{l_2} c(y)^{l_3} + b(x)^{l_2} c(x)^{l_3} a(y),\\
b(x*y) = (x_2y_2) \cdot (x_4y_4) = b(x)b(y), \quad
c(x*y) = (x_3y_3) \cdot (x_4y_4) = c(x)c(y),\\
d(x*y) = (x_1y_2^{l_2}y_3^{l_3-1}y_4^{l_2+l_3} + x_2^{l_2}x_3^{l_3-1}x_4^{l_2+l_3}y_1)\cdot(x_2y_2) = \hspace{0.3\textwidth} \\ 
= x_1 x_2 \cdot (y_2y_4)^{l_2+1} \cdot (y_3y_4)^{l_3-1} + (x_2x_4)^{l_2+1} \cdot (x_3x_4)^{l_3-1} \cdot y_1 y_2 = \\
\hspace{0.4\textwidth} = d(x) b(y)^{l_2+1} c(y)^{l_3-1} + b(x)^{l_2+1} c(x)^{l_3-1} d(y).
\end{gather*}
\end{example}

%%%%%%%%%%%%%%%%%%%%%%%%%%%%%%%%%%%%%%%%%%%%%%
\section{Affine surfaces}
\label{surf_sec}

The following theorems are the main results of the paper. 

\begin{theorem}
\label{surf_mult_theor}
Let $X$ be a normal affine algebraic surface with commutative monoid structure of rank~$r$. Then $X$ is toric, so $\KK[X] = \bigoplus\limits_{u \in S_X} \KK\chi^u$, and we have the following classification of monoid structures up to isomorphism. 

1) If $r = 2$, then $X$ is endowed with the toric monoid structure and the comultiplication $\KK[X] \to \KK[X] \otimes \KK[X]$ is given by \[\chi^u \mapsto \chi^u \otimes \chi^u \;\text{ for any } u \in S_X.\]

2) If $r = 1$, then the comultiplication $\KK[X] \to \KK[X] \otimes \KK[X]$ is given by
\[\chi^u \mapsto \chi^u \otimes \chi^u (1 \otimes \chi^e + \chi^e \otimes 1)^{\langle p_i, u\rangle} \;\text{ for any } u \in S_X,\] 
where $p_i$ is the primitive vector on a ray of the fan of $X$ and $e \in \mathfrak{R}_i$ is a Demazure root corresponding to this ray.

3) If $r = 0$, then $X = \AA^2$ and $X$ has the vector monoid structure with the comultiplication
\[\chi^u \mapsto \chi^u \otimes 1 + 1 \otimes \chi^u \;\text{ for any } u \in \Zgezero^2.\]
\end{theorem}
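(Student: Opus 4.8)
The plan is to deduce this theorem as the special case $n = 2$ of Theorem~\ref{comult_theor}. The whole content of the argument is the observation that, for a surface, every possible value of the rank is already among the ranks treated in that theorem.

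First I would pin down the admissible ranks. Since $G(X)$ is a connected algebraic group that is open in~$X$, its dimension equals $\dim X = 2$. As $G(X)$ is commutative, it splits as $\GG_m^r \times \GG_a^s$ with $r + s = 2$, so the rank~$r$ can only be $0$, $1$, or~$2$.

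Next I would invoke the numerical coincidence that drives the proof: for $n = 2$ the three ``special'' ranks $0$, $n-1$, $n$ appearing in Theorem~\ref{comult_theor} are exactly $0$, $1$, $2$. Thus the rank of any commutative monoid structure on a surface falls into one of the three cases handled there, with no intermediate ranks left over. Applying Theorem~\ref{comult_theor} with $n = 2$ then yields at once that $X$ is toric and supplies the three comultiplication formulas; matching $r = 2$ with $r = n$, $r = 1$ with $r = n-1$, and $r = 0$ with itself shows that these formulas coincide verbatim with the ones asserted here.

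The only step that genuinely needs attention is the completeness claim in the third paragraph, and it is more of a conceptual remark than an obstacle. In dimension $n \ge 3$ there are ranks strictly between $0$ and $n-1$ for which Theorem~\ref{comult_theor} offers no description, so the general theorem does not by itself classify all structures. What makes the surface case clean is precisely that the constraint $r \le \dim X = 2$ eliminates every such intermediate rank, so the specialized theorem is already a full classification and no additional case analysis is required.
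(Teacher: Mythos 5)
Your proposal is correct and matches the paper's proof, which consists of the single line ``Follows immediately from Theorem~\ref{comult_theor} for $n = 2$.'' Your extra paragraph justifying that $r \in \{0,1,2\}$ exhausts all cases (since $G(X)$ is open in $X$, hence $r + s = 2$) is exactly the implicit content of that ``immediately,'' spelled out.
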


\begin{proof}
Follows immediately from Theorem~\ref{comult_theor} for $n = 2$. 
\end{proof}

\begin{theorem}
\label{surf_Cox_theor}
Let $X$ be a normal affine algebraic surface admitting a commutative monoid structure of rank~$r$. Then $X$ is toric, and any multiplication on~$X$ is coherent with a multiplication on~$\barX$. Up to isomorphism of monoid structures on~$X$ and~$\barX$, the product on~$\barX$ is given by the following formulas in Cox coordinates $x_1, x_2$. 

1) If $r = 2$, then $\barX = \AA^2$ or $\barX = X = \AA^1 \times \KK^\times, \,(\KK^\times)^2$, and
\[(x_1, x_2) \cdot (y_1, y_2) = (x_1y_1, x_2y_2).\]

2) If $r = 1$ and $\barX = \AA^2$, then
\begin{gather*}
(x_1, x_2) \cdot (y_1, y_2) = (x_1y_2^{\langle p_2, e\rangle} + x_2^{\langle p_2, e\rangle}y_1, x_2y_2), \quad e \in \mathfrak{R}_1,\\
(x_1, x_2) \cdot (y_1, y_2) = (x_1y_1, x_2y_1^{\langle p_1, e\rangle} + x_1^{\langle p_1, e\rangle}y_2), \quad e \in \mathfrak{R}_2,
\end{gather*}
where $p_1, p_2$ are the primitive vectors on rays of the fan of~$X$ and $\mathfrak{R}_1, \mathfrak{R}_2$ are the sets of Demazure roots of $X$ corresponding to these rays. 

3) If $r = 1$ and $\barX = X = \AA^1 \times \KK^\times$, then
\[(x_1, x_2) \cdot (y_1, y_2) = (x_1y_2^e + x_2^ey_1, x_2y_2), \; e \in \ZZ.\]

4) If $r = 0$, then $\barX = X = \AA^2$ and \[(x_1, x_2) \cdot (y_1, y_2) = (x_1 + x_2, y_1 + y_2).\]
\end{theorem}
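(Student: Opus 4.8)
The plan is to derive this statement directly from Theorem~\ref{Cox_theor} specialized to $n = 2$. For a surface the rank of any commutative monoid structure lies in $\{0, 1, 2\}$, and these three values are precisely $0$, $n-1$, and $n$; thus Theorem~\ref{Cox_theor} already guarantees that $X$ is toric, that its multiplication is coherent with a multiplication on $\barX = \AA^m \times (\KK^\times)^{\widetilde m}$, and that the latter is given by the three general formulas stated there. What remains is to list the possible shapes of $\barX$ in dimension two and to substitute them into these formulas.

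First I would pin down $\barX$. Writing $X = X_0 \times (\KK^\times)^{\widetilde m}$ as in Construction~\ref{Cox2_constr}, where the cone of $X_0$ is full-dimensional of dimension $d = 2 - \widetilde m$, one has $\overbar{X_0} = \AA^m$ with $m$ equal to the number of rays of this cone. A full-dimensional strongly convex rational cone in dimension $d \le 2$ has exactly $d$ rays (a pointed cone in the plane is bounded by two rays, a ray in a line by one, the zero cone by none), so $m = d$ and $m + \widetilde m = 2$. Hence $(m, \widetilde m) \in \{(2,0), (1,1), (0,2)\}$, and $\barX$ is one of $\AA^2$, $\AA^1 \times \KK^\times$, or $(\KK^\times)^2$, with two Cox coordinates $x_1, x_2$ in every case.

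Next I would substitute into the formulas of Theorem~\ref{Cox_theor}. For rank $r = 2$ its first formula is componentwise multiplication $(x_1 y_1, x_2 y_2)$ for each of the three spaces, giving part~1. For rank $r = 0$ its third formula forces $X = \AA^2 = \barX$ with vector addition, giving part~4. For rank $r = 1$ the second formula involves a Demazure root $e \in \mathfrak{R}_i$, which exists only when the cone has a ray, i.e. when $m \ge 1$; this immediately rules out a rank-$1$ structure on $\barX = (\KK^\times)^2$. When $\barX = \AA^2$ we have $m = 2$, $\widetilde m = 0$, and the monomial $x^{\bar e, i}$ of Theorem~\ref{Cox_theor} reduces to the single factor coming from the other coordinate: for $e \in \mathfrak{R}_1$ it is $x_2^{\langle p_2, e\rangle}$ with only the first coordinate twisted, and for $e \in \mathfrak{R}_2$ it is $x_1^{\langle p_1, e\rangle}$ with only the second twisted; these are the two families of part~2. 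When $\barX = \AA^1 \times \KK^\times$ we have $m = 1$, $\widetilde m = 1$, so $\mathfrak{R}_1 = \mathfrak{R}_{1,0} + \ZZ = \{-1\} + \ZZ$, a root is $e = (-1, \tilde e)$ with $\tilde e \in \ZZ$, the empty $\AA^m$-part leaves $x^{\bar e, 1} = x_2^{\tilde e}$, and renaming the integer $\tilde e$ as $e$ yields part~3.

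I expect the only delicate point to be the correct reading of the monomial $x^{\bar e, i}$ in each low-dimensional case, together with the observation that the absence of rays excludes rank~$1$ on the full torus; everything else is the bookkeeping of substituting three shapes of $\barX$ into three general formulas, so no genuine obstacle should arise beyond Theorem~\ref{Cox_theor} itself.
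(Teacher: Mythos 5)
Your proposal is correct and takes essentially the same approach as the paper, which likewise deduces everything from Theorem~\ref{Cox_theor} specialized to $n=2$ after enumerating the possible shapes $\AA^2$, $\AA^1\times\KK^\times$, $(\KK^\times)^2$ of $\barX$. The only cosmetic difference is case~3): the paper re-derives the formula on $\AA^1\times\KK^\times$ by a short direct computation of the $\Ker\chi^{e}\times\GG_a$-action, whereas you read it off from the general formula of Theorem~\ref{Cox_theor}, part~2) with $m=\widetilde m=1$, which yields the same result.
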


\begin{proof}
Cases 1), 2) and 4) follow immediately from Theorem~\ref{Cox_theor}. In case $\barX = \AA^1 \times \KK^\times$, the cone of~$X$ has one ray~$\rho_1$, whence $X = \AA^1 \times \KK^\times$ and we can assume that $p_1 = (1,0)$. Then the set of Demazure roots consists of vectors $(-1, e)$, $e \in \ZZ$, the corresponding LND $\pa_e$ equals $x_2^e\frac{\pa}{\pa x_1}$, and an element $(t, \alpha) \in \Ker \chi^{(-1, e)} \times \GG_a$ acts on $(x_1, x_2)$ via $(t^e (x_1 + \alpha x_2), t x_2)$. Identifying $(t, \alpha)$ with $(t^e \alpha, t)$, we obtain the required monoid structure on $X$. 
\end{proof}

\begin{example}
\label{cone_multexample}
Let us list all monoid structures of rank~$1$ corresponding to $p_1 = (0,1)$ for the quadratic cone $X = \{ac = b^2\} \subseteq \AA^3$, see Examples~\ref{cone_1example} and~\ref{cone_2example}. According to Theorem~\ref{surf_mult_theor}, the comultiplication is given by $\chi^u \mapsto \chi^u \otimes \chi^u (1 \otimes \chi^e + \chi^e \otimes 1)^{u_2}$. Recall that $a=\chi^{(1,0)}$, $b=\chi^{(1,1)}$ and $c=\chi^{(1,2)}$, thus for a Demazure root $e = (l, -1) \in \mathfrak{R}_1$ we have $\chi^e(x) = a(x)^{l + 1} b(x)^{-1}$. Then the coordinates $a, b, c$ of the product $x*y$ are given by
\begin{gather*}
a(x*y) = a(x) a(y),\\
b(x*y) = b(x) a(y)^{l + 1} + a(x)^{l + 1} b(y),\\
c(x*y) = c(x) a(y)^{2l + 1} + a(x)^{2l + 1} c(y) + 2 a(x)^l b(x) a(y)^l b(y).
\end{gather*}
By Theorem~\ref{surf_Cox_theor}, the multiplication on~$\barX = \AA^2$ corresponding to $e = (l, -1) \in \mathfrak{R}_1$ is given by
\[
(x_1, x_2) \cdot (y_1, y_2) = (x_1y_2^{2l+1} + x_2^{2l+1}y_1, x_2y_2). 
\]
It is easy to verify that this multiplication on~$\AA^2$ is coherent with the above multiplication on~$X$. Indeed, for $x = \pi(x_1, x_2)$ and $y = \pi(y_1, y_2)$ we have
\begin{gather*}
a(x*y) = (x_2y_2)^2 = x_2^2 y_2^2 = a(x) a(y),\\
b(x*y) = (x_1y_2^{2l+1} + x_2^{2l+1}y_1) \cdot (x_2y_2) = x_1x_2y_2^{2l+2} + x_2^{2l+2}y_1y_2 = b(x)a(y)^{l+1} + a(x)^{l+1}b(y),\\
c(x*y) = (x_1y_2^{2l+1} + x_2^{2l+1}y_1)^2 = x_1^2 y_2^{4l+2} + x_2^{4l+2} y_1^2 + 2x_1 x_2^{2l+1} y_1y_2^{2l+1} = \\
= c(x)a(y)^{2l+1} + a(x)^{2l+1}c(y) + 2 a(x)^l b(x) a(y)^l b(y).
\end{gather*}
\end{example}

\medskip

For a two-dimensional cone $\sigma$ with two rays, denote by $\tau\colon N_\QQ \to N_\QQ$ the linear map which swaps the primitive vectors $p_1$, $p_2$ on the rays, and by $\tau^*\colon M_\QQ \to M_\QQ$ the induced dual map. We obtain the following corollary of~Proposition~\ref{isomorph_dimn_prop}.

\begin{corollary}
\label{isomorph_prop}
Consider the monoid structures of rank~$1$ on $X$ listed in Theorems~\ref{surf_mult_theor} and~\ref{surf_Cox_theor}.

1) Let the cone $\sigma$ be two-dimensional. 
If $\tau(N) \ne N$, then the monoid structures are pairwise non-isomorphic. 
If $\tau(N) = N$, then $\tau^*\colon \mathcal{R}_2 \to \mathcal{R}_1$ is a bijection, the monoid structures corresponding to Demazure roots $e \in \mathcal{R}_2$ and $\tau^*(e) \in \mathcal{R}_1$ are isomorphic, and monoid structures corresponding to $e \in \mathcal{R}_2$ are pairwise non-isomorphic. 

2) For the one-dimensional cone $\sigma$, all the monoid structures listed in case~3) of Theorem~\ref{surf_Cox_theor} are isomorphic. 
\end{corollary}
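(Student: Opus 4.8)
The plan is to reduce everything to Proposition~\ref{isomorph_dimn_prop}, which for $n=2$ asserts that the rank-$1$ monoid structures attached to Demazure roots $e_1,e_2$ are isomorphic if and only if some lattice automorphism of $N$ preserving the fan induces on $M$ a map carrying $e_2$ to $e_1$. Writing $\operatorname{Aut}(N,\sigma)$ for the group of lattice automorphisms of $N$ preserving $\sigma$ and its faces, the isomorphism classes of these monoids are exactly the orbits of $\operatorname{Aut}(N,\sigma)$ acting on $\mathfrak{R}=\mathfrak{R}_1\sqcup\mathfrak{R}_2$ through the dual action on $M$. So the two tasks are to determine $\operatorname{Aut}(N,\sigma)$ and to trace this dual action; both parts of the corollary then follow by identifying orbits.

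For part~1, with $\sigma$ two-dimensional, I would first classify $\operatorname{Aut}(N,\sigma)$. Any such automorphism permutes the two rays, hence permutes the primitive generators $p_1,p_2$, which form a $\QQ$-basis of $N_\QQ$; it is therefore determined by that permutation. Fixing both rays fixes $p_1$ and $p_2$ and so forces the identity, while swapping them forces precisely the linear map $\tau$. As $\tau$ is a genuine lattice automorphism exactly when $\tau(N)=N$, we get $\operatorname{Aut}(N,\sigma)=\{\mathrm{id}\}$ if $\tau(N)\ne N$ and $\operatorname{Aut}(N,\sigma)=\{\mathrm{id},\tau\}$ if $\tau(N)=N$. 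In the former case the induced map on $M$ is trivial, so by Proposition~\ref{isomorph_dimn_prop} two roots give isomorphic monoids only when they coincide, which is the asserted pairwise non-isomorphy.

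In the case $\tau(N)=N$ I would verify the dual behaviour by a one-line computation with the pairing: since $\langle p_1,\tau^*(e)\rangle=\langle \tau(p_1),e\rangle=\langle p_2,e\rangle$ and symmetrically, the map $\tau^*$ interchanges the two defining conditions of $\mathfrak{R}_1$ and $\mathfrak{R}_2$, so $\tau^*(\mathfrak{R}_2)=\mathfrak{R}_1$; because $\tau$, and hence $\tau^*$, is an involution, this is a bijection. The resulting $\ZZ/2$-action on $\mathfrak{R}$ has orbits $\{e,\tau^*(e)\}$ with one member in each $\mathfrak{R}_i$ and no fixed roots, since $\mathfrak{R}_1\cap\mathfrak{R}_2=\emptyset$. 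Proposition~\ref{isomorph_dimn_prop} then gives exactly the isomorphisms between $e\in\mathfrak{R}_2$ and $\tau^*(e)\in\mathfrak{R}_1$ and the pairwise non-isomorphy within $\mathfrak{R}_2$.

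For part~2 the cone is one-dimensional; after normalizing $p_1=(1,0)$ as in the proof of Theorem~\ref{surf_Cox_theor}, the fan no longer spans $N_\QQ$ and $\operatorname{Aut}(N,\sigma)$ is correspondingly larger, consisting of the matrices $\left(\begin{smallmatrix}1&a\\0&\pm1\end{smallmatrix}\right)$ with $a\in\ZZ$ that fix the ray $\QQ_{\ge0}(1,0)$. Computing the induced dual action on a root $(-1,e)\in\mathfrak{R}_1$ sends it to $(-1,\pm e-a)$, whose second coordinate sweeps out all of $\ZZ$ as $a$ varies; thus the action is transitive on $\{(-1,e)\mid e\in\ZZ\}$ and all the structures of case~3) of Theorem~\ref{surf_Cox_theor} are isomorphic. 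I expect the main obstacle to be the honest enumeration of $\operatorname{Aut}(N,\sigma)$ in each geometry---especially noticing that the non-spanning one-dimensional cone admits the extra shears $a\in\ZZ$ that merge all roots into one orbit---together with keeping the transpose/dual bookkeeping straight when passing from automorphisms of $N$ to their action on $M$ and on the Demazure roots.
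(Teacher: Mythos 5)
Your proposal is correct and follows exactly the route the paper intends: the corollary is derived directly from Proposition~\ref{isomorph_dimn_prop} by enumerating the fan-preserving automorphisms of $N$ (trivial, $\{\mathrm{id},\tau\}$, or the shears fixing the single ray) and identifying the orbits of the dual action on the Demazure roots. Your explicit verifications that $\tau^*(\mathfrak{R}_2)=\mathfrak{R}_1$ and that the matrices $\bigl(\begin{smallmatrix}1&a\\0&\pm1\end{smallmatrix}\bigr)$ act transitively on $\{(-1,e)\mid e\in\ZZ\}$ are exactly the details the paper leaves implicit.
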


\begin{center}
	%https://www.mathcha.io/editor

\tikzset{every picture/.style={line width=0.75pt}} %set default line width to 0.75pt        

\begin{tikzpicture}[x=0.75pt,y=0.75pt,yscale=-1,xscale=1]
%uncomment if require: \path (0,202); %set diagram left start at 0, and has height of 202

%Shape: Grid [id:dp27853184126323627] 
\draw  [draw opacity=0] (120.69,20.6) -- (321.08,20.6) -- (321.08,182.11) -- (120.69,182.11) -- cycle ; \draw  [color={rgb, 255:red, 155; green, 155; blue, 155 }  ,draw opacity=0.5 ] (120.69,20.6) -- (120.69,182.11)(140.69,20.6) -- (140.69,182.11)(160.69,20.6) -- (160.69,182.11)(180.69,20.6) -- (180.69,182.11)(200.69,20.6) -- (200.69,182.11)(220.69,20.6) -- (220.69,182.11)(240.69,20.6) -- (240.69,182.11)(260.69,20.6) -- (260.69,182.11)(280.69,20.6) -- (280.69,182.11)(300.69,20.6) -- (300.69,182.11)(320.69,20.6) -- (320.69,182.11) ; \draw  [color={rgb, 255:red, 155; green, 155; blue, 155 }  ,draw opacity=0.5 ] (120.69,20.6) -- (321.08,20.6)(120.69,40.6) -- (321.08,40.6)(120.69,60.6) -- (321.08,60.6)(120.69,80.6) -- (321.08,80.6)(120.69,100.6) -- (321.08,100.6)(120.69,120.6) -- (321.08,120.6)(120.69,140.6) -- (321.08,140.6)(120.69,160.6) -- (321.08,160.6)(120.69,180.6) -- (321.08,180.6) ; \draw  [color={rgb, 255:red, 155; green, 155; blue, 155 }  ,draw opacity=0.5 ]  ;
%Straight Lines [id:da7581217572364343] 
\draw    (201.8,120.6) -- (201.8,102.6) ;
\draw [shift={(201.8,100.6)}, rotate = 450] [color={rgb, 255:red, 0; green, 0; blue, 0 }  ][line width=0.75]    (10.93,-3.29) .. controls (6.95,-1.4) and (3.31,-0.3) .. (0,0) .. controls (3.31,0.3) and (6.95,1.4) .. (10.93,3.29)   ;
%Straight Lines [id:da8591993237330036] 
\draw    (201.8,120.6) -- (240.01,139.71) ;
\draw [shift={(241.8,140.6)}, rotate = 206.57] [color={rgb, 255:red, 0; green, 0; blue, 0 }  ][line width=0.75]    (10.93,-3.29) .. controls (6.95,-1.4) and (3.31,-0.3) .. (0,0) .. controls (3.31,0.3) and (6.95,1.4) .. (10.93,3.29)   ;
%Shape: Pie [id:dp7002659778598748] 
\draw  [draw opacity=0][fill={rgb, 255:red, 0; green, 0; blue, 200 }  ,fill opacity=0.2 ] (199.81,29.28) .. controls (200.11,29.28) and (200.4,29.28) .. (200.69,29.28) .. controls (263.54,29.28) and (314.48,70.17) .. (314.48,120.6) .. controls (314.48,138.47) and (308.09,155.14) .. (297.03,169.22) -- (200.69,120.6) -- cycle ;
%Shape: Pie [id:dp27407540125977614] 
\draw  [draw opacity=0][fill={rgb, 255:red, 0; green, 0; blue, 200 }  ,fill opacity=0.2 ] (501.32,35.55) .. controls (543.35,48.92) and (573.1,81.81) .. (573.19,120.29) -- (459.4,120.47) -- cycle ;
%Shape: Grid [id:dp20386651657123123] 
\draw  [draw opacity=0] (379.4,20.47) -- (579.79,20.47) -- (579.79,181.98) -- (379.4,181.98) -- cycle ; \draw  [color={rgb, 255:red, 155; green, 155; blue, 155 }  ,draw opacity=0.5 ] (379.4,20.47) -- (379.4,181.98)(399.4,20.47) -- (399.4,181.98)(419.4,20.47) -- (419.4,181.98)(439.4,20.47) -- (439.4,181.98)(459.4,20.47) -- (459.4,181.98)(479.4,20.47) -- (479.4,181.98)(499.4,20.47) -- (499.4,181.98)(519.4,20.47) -- (519.4,181.98)(539.4,20.47) -- (539.4,181.98)(559.4,20.47) -- (559.4,181.98)(579.4,20.47) -- (579.4,181.98) ; \draw  [color={rgb, 255:red, 155; green, 155; blue, 155 }  ,draw opacity=0.5 ] (379.4,20.47) -- (579.79,20.47)(379.4,40.47) -- (579.79,40.47)(379.4,60.47) -- (579.79,60.47)(379.4,80.47) -- (579.79,80.47)(379.4,100.47) -- (579.79,100.47)(379.4,120.47) -- (579.79,120.47)(379.4,140.47) -- (579.79,140.47)(379.4,160.47) -- (579.79,160.47)(379.4,180.47) -- (579.79,180.47) ; \draw  [color={rgb, 255:red, 155; green, 155; blue, 155 }  ,draw opacity=0.5 ]  ;
%Shape: Circle [id:dp3599172665410375] 
\draw  [draw opacity=0][fill={rgb, 255:red, 0; green, 0; blue, 255 }  ,fill opacity=1 ] (496.25,140.47) .. controls (496.25,138.73) and (497.66,137.32) .. (499.4,137.32) .. controls (501.14,137.32) and (502.55,138.73) .. (502.55,140.47) .. controls (502.55,142.21) and (501.14,143.62) .. (499.4,143.62) .. controls (497.66,143.62) and (496.25,142.21) .. (496.25,140.47) -- cycle ;
%Shape: Circle [id:dp7050670109417265] 
\draw  [draw opacity=0][fill={rgb, 255:red, 0; green, 0; blue, 255 }  ,fill opacity=1 ] (516.25,140.47) .. controls (516.25,138.73) and (517.66,137.32) .. (519.4,137.32) .. controls (521.14,137.32) and (522.55,138.73) .. (522.55,140.47) .. controls (522.55,142.21) and (521.14,143.62) .. (519.4,143.62) .. controls (517.66,143.62) and (516.25,142.21) .. (516.25,140.47) -- cycle ;
%Shape: Circle [id:dp5598493573813159] 
\draw  [draw opacity=0][fill={rgb, 255:red, 0; green, 0; blue, 255 }  ,fill opacity=1 ] (536.25,140.47) .. controls (536.25,138.73) and (537.66,137.32) .. (539.4,137.32) .. controls (541.14,137.32) and (542.55,138.73) .. (542.55,140.47) .. controls (542.55,142.21) and (541.14,143.62) .. (539.4,143.62) .. controls (537.66,143.62) and (536.25,142.21) .. (536.25,140.47) -- cycle ;
%Shape: Circle [id:dp9613901963234641] 
\draw  [draw opacity=0][fill={rgb, 255:red, 0; green, 0; blue, 255 }  ,fill opacity=1 ] (556.25,140.47) .. controls (556.25,138.73) and (557.66,137.32) .. (559.4,137.32) .. controls (561.14,137.32) and (562.55,138.73) .. (562.55,140.47) .. controls (562.55,142.21) and (561.14,143.62) .. (559.4,143.62) .. controls (557.66,143.62) and (556.25,142.21) .. (556.25,140.47) -- cycle ;
%Shape: Circle [id:dp527679526290006] 
\draw  [draw opacity=0][fill={rgb, 255:red, 0; green, 0; blue, 255 }  ,fill opacity=1 ] (476.25,140.47) .. controls (476.25,138.73) and (477.66,137.32) .. (479.4,137.32) .. controls (481.14,137.32) and (482.55,138.73) .. (482.55,140.47) .. controls (482.55,142.21) and (481.14,143.62) .. (479.4,143.62) .. controls (477.66,143.62) and (476.25,142.21) .. (476.25,140.47) -- cycle ;
%Shape: Circle [id:dp14376189769969838] 
\draw  [draw opacity=0][fill={rgb, 255:red, 0; green, 0; blue, 255 }  ,fill opacity=1 ] (456.25,100.47) .. controls (456.25,98.73) and (457.66,97.32) .. (459.4,97.32) .. controls (461.14,97.32) and (462.55,98.73) .. (462.55,100.47) .. controls (462.55,102.21) and (461.14,103.62) .. (459.4,103.62) .. controls (457.66,103.62) and (456.25,102.21) .. (456.25,100.47) -- cycle ;
%Shape: Circle [id:dp15332465089423275] 
\draw  [draw opacity=0][fill={rgb, 255:red, 0; green, 0; blue, 255 }  ,fill opacity=1 ] (476.25,60.47) .. controls (476.25,58.73) and (477.66,57.32) .. (479.4,57.32) .. controls (481.14,57.32) and (482.55,58.73) .. (482.55,60.47) .. controls (482.55,62.21) and (481.14,63.62) .. (479.4,63.62) .. controls (477.66,63.62) and (476.25,62.21) .. (476.25,60.47) -- cycle ;
%Shape: Circle [id:dp7973259481637867] 
\draw  [draw opacity=0][fill={rgb, 255:red, 0; green, 0; blue, 255 }  ,fill opacity=1 ] (496.25,20.47) .. controls (496.25,18.73) and (497.66,17.32) .. (499.4,17.32) .. controls (501.14,17.32) and (502.55,18.73) .. (502.55,20.47) .. controls (502.55,22.21) and (501.14,23.62) .. (499.4,23.62) .. controls (497.66,23.62) and (496.25,22.21) .. (496.25,20.47) -- cycle ;
%Shape: Circle [id:dp307569021086342] 
\draw  [draw opacity=0][fill={rgb, 255:red, 0; green, 0; blue, 255 }  ,fill opacity=1 ] (456.25,140.47) .. controls (456.25,138.73) and (457.66,137.32) .. (459.4,137.32) .. controls (461.14,137.32) and (462.55,138.73) .. (462.55,140.47) .. controls (462.55,142.21) and (461.14,143.62) .. (459.4,143.62) .. controls (457.66,143.62) and (456.25,142.21) .. (456.25,140.47) -- cycle ;

% Text Node
\draw (140,84.2) node [anchor=north west][inner sep=0.75pt]  [font=\footnotesize] [align=left] {$\displaystyle p_{1} =( 0,1)$};
% Text Node
\draw (162,146) node [anchor=north west][inner sep=0.75pt]   [align=left] {{\footnotesize $\displaystyle p_{2} \ =\ ( d,\ -k)$}};
% Text Node
\draw (142.69,44) node [anchor=north west][inner sep=0.75pt]  [color={rgb, 255:red, 128; green, 128; blue, 128 }  ,opacity=1 ]  {$N$};
% Text Node
\draw (401.4,43.87) node [anchor=north west][inner sep=0.75pt]  [color={rgb, 255:red, 128; green, 128; blue, 128 }  ,opacity=1 ]  {$M$};
% Text Node
\draw (561.4,147.02) node [anchor=north west][inner sep=0.75pt]  [color={rgb, 255:red, 0; green, 0; blue, 255 }  ,opacity=1 ]  {$\mathfrak{R}_{1}$};
% Text Node
\draw (465,21.47) node [anchor=north west][inner sep=0.75pt]  [color={rgb, 255:red, 0; green, 0; blue, 255 }  ,opacity=1 ]  {$\mathfrak{R}_{2}$};
% Text Node
\draw (286.8,40.2) node [anchor=north west][inner sep=0.75pt]  [color={rgb, 255:red, 208; green, 2; blue, 27 }  ,opacity=1 ]  {$\textcolor[rgb]{0,0,0.78}{\sigma }$};
% Text Node
\draw (545.2,39.8) node [anchor=north west][inner sep=0.75pt]  [color={rgb, 255:red, 0; green, 0; blue, 200 }  ,opacity=1 ]  {$\textcolor[rgb]{0,0,0.78}{\omega }$};
% Text Node
\draw (192.8,121.2) node [anchor=north west][inner sep=0.75pt]  [font=\scriptsize,color={rgb, 255:red, 155; green, 155; blue, 155 }  ,opacity=1 ]  {$0$};
% Text Node
\draw (451.2,121.6) node [anchor=north west][inner sep=0.75pt]  [font=\scriptsize,color={rgb, 255:red, 155; green, 155; blue, 155 }  ,opacity=1 ]  {$0$};

\end{tikzpicture}

\end{center}

Any two-dimensional strongly convex polyhedral cone $\sigma$ has a normal form, see~\cite[Proposition~10.1.1]{CLS}. Namely, there exists a basis $e_1, e_2 \in N$ such that
\[\sigma = \cone(e_2, de_1 - ke_2),\]
where $d > 0$, $0 \le k \le d$, and $\gcd(d, k) = 1$.

In this basis, $p_1 = (0, 1)$, $p_2 = (d, -k)$, and we have the following series of Demazure roots: 
\begin{gather*}
\mathfrak{R}_1 = \{e_1^{(l)} = (l, -1) \mid l \in \Zgezero\}\\
\mathfrak{R}_2 = \{(l_1, l_2) \mid dl_1 - kl_2 = -1, \, b \ge 0\} = \{e_2^{(l)} = e_2^{(0)} + (d, k)l \mid l \in \Zgezero\}
\end{gather*}

\begin{corollary}
If $k^2 \not\equiv 1$ modulo~$d$, then the monoid structures or rank~$1$ listed in Theorems~\ref{surf_mult_theor} and~\ref{surf_Cox_theor} are pairwise non-isomorphic. If $k^2 \equiv 1$ modulo~$d$, then the monoid structures are isomorphic if and only if they correspond to Demazure roots $e_1^{(l)}$ and $e_2^{(l)}$ for some $l \in \Zgezero$.
\end{corollary}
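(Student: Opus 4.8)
The plan is to deduce everything from Corollary~\ref{isomorph_prop}, whose dichotomy is governed entirely by whether the ray-swapping map $\tau$ lies in $\mathrm{GL}(N)$. So the first step is to write $\tau$ explicitly in the basis $e_1, e_2$. Since $\tau$ is the unique linear map with $\tau(p_1) = p_2$ and $\tau(p_2) = p_1$, where $p_1 = (0,1)$ and $p_2 = (d,-k)$, solving $\tau(e_2) = (d,-k)$ together with $d\,\tau(e_1) - k\,\tau(e_2) = (0,1)$ gives
\[
\tau = \begin{pmatrix} k & d \\[2pt] \tfrac{1-k^2}{d} & -k \end{pmatrix}, \qquad \det \tau = -k^2 - d\cdot\tfrac{1-k^2}{d} = -1,
\]
so $\tau$ is an involution of $N_\QQ$ of determinant~$-1$ (a reflection swapping the two rays).

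The second step is the integrality criterion. All entries of $\tau$ except the lower-left one are integers, and $\det\tau = -1$ automatically; hence $\tau \in \mathrm{GL}(N)$, i.e.\ $\tau(N) = N$, precisely when $\tfrac{1-k^2}{d} \in \ZZ$, that is, when $d \mid (k^2-1)$, which is exactly the condition $k^2 \equiv 1 \pmod d$. Thus if $k^2 \not\equiv 1 \pmod d$, then $\tau(N) \ne N$ and Corollary~\ref{isomorph_prop},~1) immediately yields that the rank~$1$ structures are pairwise non-isomorphic. (One may also note that $\tau$ is the only candidate for a nontrivial lattice automorphism preserving~$\sigma$: such an automorphism must permute the two rays, and fixing both rays forces it to fix the $\QQ$-basis $p_1, p_2$ of $N_\QQ$, hence to be the identity.)

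It remains to treat $k^2 \equiv 1 \pmod d$, where $\tau(N)=N$ and Corollary~\ref{isomorph_prop},~1) tells us that $\tau^*\colon \mathfrak{R}_2 \to \mathfrak{R}_1$ is a bijection pairing isomorphic structures, while structures inside a single~$\mathfrak{R}_i$ are pairwise non-isomorphic. The remaining task is to pin down this bijection on the given parametrizations, i.e.\ to show $\tau^*(e_2^{(l)}) = e_1^{(l)}$. Here $\tau^* = \tau^{\mathsf{T}}$, and the generating direction of $\mathfrak{R}_2$ is the primitive solution $(k,d)$ of $d l_1 - k l_2 = 0$ (as is visible already in Example~\ref{cone_1example}). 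A direct computation gives $\tau^*(k,d) = (1,0)$, which is the generating direction of $\mathfrak{R}_1$; hence $\tau^*$ carries the progression $e_2^{(l)} = e_2^{(0)} + l(k,d)$ to a progression in $\mathfrak{R}_1$ with step $(1,0)$. Writing $\tau^*(e_2^{(0)}) = (j_0,-1) \in \mathfrak{R}_1$, the image of all of $\mathfrak{R}_2$ is $\{(j_0+l,-1) : l \ge 0\}$, and since $\tau^*$ is onto $\mathfrak{R}_1 = \{(j,-1): j \ge 0\}$ we must have $j_0 = 0$, so $\tau^*(e_2^{(0)}) = (0,-1) = e_1^{(0)}$ and therefore $\tau^*(e_2^{(l)}) = e_1^{(l)}$. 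Consequently $e_1^{(l)}$ and $e_2^{(l')}$ yield isomorphic monoids if and only if $l = l'$, which is the asserted description.

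The matrix of $\tau$ and the product $\tau^*(k,d)$ are routine; I expect the only delicate point to be the bookkeeping in the last paragraph, namely using the correct generating direction $(k,d)$ of $\mathfrak{R}_2$ and verifying that $\tau^*$ sends the minimal root $e_2^{(0)}$ to the minimal root $e_1^{(0)}$. This surjectivity-at-the-base argument is what forces the index-preserving pairing $l \leftrightarrow l$ rather than a nontrivial shift, and it is the step most prone to an off-by-one error.
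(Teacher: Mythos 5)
Your proposal is correct and follows the same route as the paper: reduce to Corollary~\ref{isomorph_prop}, compute the matrix of the ray-swapping involution $\tau$ in the basis $e_1,e_2$, and observe that it is integral exactly when $d \mid k^2-1$. Your matrix, the determinant $-1$, and the equivalence with $k^2\equiv 1 \pmod d$ all match the paper's computation.

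Your last paragraph, however, goes beyond what the paper writes: the paper stops after the integrality check and never verifies that $\tau^*$ pairs the roots index-preservingly, i.e.\ that $\tau^*(e_2^{(l)}) = e_1^{(l)}$ rather than some shifted matching -- yet this is exactly what the second sentence of the corollary asserts. Your argument for this (compute $\tau^*(k,d)=(1,0)$, so $\tau^*$ sends the arithmetic progression $e_2^{(0)}+l(k,d)$ to a progression in $\mathfrak{R}_1$ with step $(1,0)$, and surjectivity onto $\mathfrak{R}_1=\{(j,-1):j\ge 0\}$ forces the base point to land on $(0,-1)$) is clean and correct, and it is a genuine completion of the paper's proof rather than a restatement. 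Note also that you implicitly use the correct generating direction $(k,d)$ of the line $dl_1-kl_2=-1$, whereas the paper's displayed parametrization of $\mathfrak{R}_2$ contains a typo $(d,k)$; your version is the one consistent with the equation.
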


\begin{proof}
According to Corollary~\ref{isomorph_prop}, we have to investigate whether $\tau\colon N_\QQ \to N_\QQ$ swapping two rays of the cone preserves the lattice~$N$. In the bases~$e_1, e_2$, the matrix of~$\tau$ is equal to~$\begin{pmatrix}k & d \\ \frac{1-k^2}{d} & -k \end{pmatrix}$ and it is integer if and only if $d$ divides $k^2 - 1$. 
\end{proof}

For example, monoid structures listed in Example~\ref{cone_multexample} are all the non-isomorphic commutative monoid structures of rank~$1$ on the quadratic cone $\{ac=b^2\} \subseteq \AA^3$ since $1^2 \equiv 1$ modulo~$2$.

%%%%%%%%%%%%%%%%%%%%%%%%%%%%%%%%%%%%%%%%%%%%%%

\end{document}